\documentclass{article}
\usepackage[english]{babel}
\usepackage[latin1]{inputenc}
\usepackage{graphicx}
\usepackage{amssymb}
\usepackage{amsmath}
\usepackage{amsthm}
\usepackage{enumerate}
\usepackage{color}
\newtheorem{theorem}{Theorem}[section]
\newtheorem{lemma}[theorem]{Lemma}
\newtheorem{corollary}[theorem]{Corollary}

\theoremstyle{definition}
\newtheorem{example}[theorem]{Examples}

\newtheorem{definition}[theorem]{Definition}

\theoremstyle{remark}
\newtheorem{remark}[theorem]{Remark}
\numberwithin{equation}{section}

\newcommand{\rn}{\ensuremath{\mathbb R}^{n}}
\newcommand{\nat}{\ensuremath{\mathbb N}}
\newcommand{\no}{\ensuremath{{\mathbb N}_0}}

\newcommand{\ganz}{\ensuremath{\mathbb Z}}
\newcommand{\real}{\ensuremath{\mathbb R}} 
\newcommand{\beq}{\begin{equation}}
\newcommand{\eeq}{\end{equation}}
\newcommand{\bli}{\begin{list}{}{\labelwidth1.7em\leftmargin2.1em}}
\newcommand{\eli}{\end{list}}
\newcommand{\bit}{\begin{itemize}}
\newcommand{\eit}{\end{itemize}}
\newcommand{\dint}{\ensuremath{\,\mathrm{d}}}

\newcommand{\id}{\ensuremath\mathrm{id}}
\newcommand{\ve}{\ensuremath\varepsilon}
\newcommand{\wt}[1]{\ensuremath\widetilde{#1}}

\begin{document}

\title{Optimal Calder\'on Spaces for generalized Bessel potentials}
\date{\today}

\author{Elza Bakhtigareeva, Mikhail L. Goldman, and Dorothee D. Haroske}

\maketitle

\begin{abstract}

 In the paper we investigate the properties of spaces with generalized smoothness, such as Calder\'on spaces that include the classical Nikolskii-Besov spaces and many of their generalizations, and describe differential properties of generalized Bessel potentials that include classical Bessel potentials and Sobolev spaces. Kernels of potentials may have non-power singularity at the origin. With the help of order-sharp estimates for moduli of continuity of potentials, we establish the criteria of embeddings of potentials into Calder\'on spaces, and describe the optimal spaces for such embeddings.

\end{abstract}

\section{Introduction}
\label{intro}

The paper is devoted to generalized Bessel potentials constructed by the convolutions of generalized Bessel-McDonald kernels with functions from the basic rearrangement invariant space. If the criterion is satisfied for the embedding of potentials into the space of bounded continuous functions, we state the equivalent description for the cones of moduli of continuity of potentials in the uniform norm. This gives the opportunity to obtain the criterion for the embedding of potentials into the Calder\'on space. We develop here the results of \cite{GoHa}. Some results presented here we announced in papers \cite{GoHa-3,GoHa-4}. In the case of generalized Bessel potentials constructed over the basic weighted Lorentz space we describe explicitly the optimal Calder\'on space for such an embedding. The results of Sections 3, 4 are based on an application of some results obtained in \cite{BaGo}.

The paper is organized as follows. In Section~\ref{sect-1} notation, essential concepts and definitions are presented. We present the results concerning equivalent descriptions for the cone of moduli of continuity for generalized Bessel potentials in uniform norm (Theorem \ref{theo-g-1.1}), and prove two-sided estimates for a variant of the continuity envelope function for the space of potentials (Theorem \ref{theo-g-ad-2.11}). The criterion of embedding for the space of potentials into the Calder\'on space is presented in Theorem \ref{theo-g-2.4}. Sections~\ref{sect-3}-\ref{sect-3b} are devoted to the description of the optimal Calder\'on space for such embedding. Theorem \ref{theo-g-3.3} gives an explicit description in the case when the basic space for the potential is a weighted Lorentz space. The proofs of the main results of Section~\ref{sect-3} we give in Sections~\ref{sect-3a} and~\ref{sect-3b}. Section~\ref{sect-4} contains some explicit descriptions of the optimal Calder\'on space.

\section{Preparations}
\label{sect-1}

First we fix some notation. By $\nat$ we denote the set of natural numbers, 
by $\nat_0$ the set $\nat \cup \{ 0\}$. 
For two positive real sequences 
$\{\alpha_k\}_{k\in \nat}$ and $\{\beta_k\}_{k\in \nat}$ we mean by 
$\alpha_k\sim \beta_k$ that there exist constants $c_1,c_2>0$ such that $c_1 \alpha_k\leq
\beta_k\leq c_2  \alpha_k$ for all $k\in \nat$; similarly for positive
functions. Usually $B_r =  \{y\in \rn: |y|<r\}$ stands for a ball in $\rn$ centred at the origin and with radius $r>0$. 
We denote by $\mu_n$ the Lebesgue measure on $\real_n$, $n\in \nat$, and by  $V_{n}$ the volume of the  $n$-dimensional unit ball, that is, $V_n = \mu_n(B_1)$. For a set $A$ we denote by $\chi_A$ its characteristic function.


Given two (quasi-) Banach spaces $X$ and $Y$, we write $X\hookrightarrow Y$
if $X\subset Y$ and the natural embedding of $X$ in $Y$ is continuous.

All unimportant positive constants will be denoted by $c$, occasionally with
subscripts.

\subsection{Banach function spaces}
\label{sec-1-1}
We use some notation and general facts from the theory of Banach function
spaces and rearrangement-invariant spaces; for general background material we refer to \cite{BS}. As usual 
we call $f^\ast$ the decreasing rearrangement of the function $f$, i.e., $0\leq f^\ast$ is a decreasing, right-continuous function on $\real_+ = (0,\infty)$, equi-measurable with $f$, 
\beq
\mu_n(\{x\in\rn: |f(x)|>s\}) = \mu_1(\{t\in\real_+: f^\ast(t)>s\}), \quad s>0.
\eeq

\begin{definition}\label{bfs-ris}
\bli
\item[{\bfseries\upshape (i)}]
A linear space $X=X(0,T)$ of measurable functions on $(0,T)$, equipped with the norm $\|\cdot\|_X$, is called a {\em  Banach function space}, shortly: {\em BFS}, if the following conditions are satisfied:
\bit
\item[{\bfseries\upshape (P1)}]
$\|f\|_X=0 \iff f=0\ $ $\mu$-a.e. on $(0,T)$;
\item[{\bfseries\upshape (P2)}]
$|f|\leq g,\  g\in X \quad\text{implies}\quad f\in X, \ \|f\|_X \leq \|g\|_X;$
\item[{\bfseries\upshape (P3)}]
If for some functions $f_n\in X$, $f_n\geq 0$, $n\in\nat$, and $f_n$ monotonically increasing to $f$, then  $\|f_n\|_X \to \|f\|_X$, i.e.
\beq
\label{def-bfs}
\|f\|_X=\lim_{n\to\infty}\|f_n\|_X.
\eeq
\item[{\bfseries\upshape (P4)}]
For every measurable set $B\subset (0,T)$ with $\mu(B)>0$, there exists some $c_B>0$, such that for all $f\in X$,
\[
\int_B |f| \dint \mu \leq c_B \|f\|_X.
\]
\item[{\bfseries\upshape (P5)}]
For every measurable set $B\subset (0,T)$ with $\mu(B)>0$, $\|\chi_B\|_X<\infty$.
\eit
\item[{\bfseries\upshape (ii)}]
A BFS $E$ is called a 
{\em rearrangement-invariant space}, shortly: {\em RIS}, if its norm is monotone with respect to rearrangements,
\beq
f^\ast \leq g^\ast, \ g\in E \quad \text{implies}\quad f\in E, \ \|f\|_E\leq \|g\|_E.\label{1.3}
\eeq
\eli
\end{definition}

\begin{remark}\label{r1}
	Note that the limit on the right-hand side of \eqref{def-bfs} always exists (finite or infinite), because the sequence of norms  increases. The finiteness of this limit is a criterion for $f \in X$.
\end{remark}

For an RIS $E(\rn)$ its associate space $E'(\rn)$ is an RIS again, equipped with the norm
\[
 \|g\|_{E'(\rn)} = \sup_{\|f\|_{E(\rn)}\leq 1} \; \int\limits_0^\infty f^\ast(s) g^\ast(s)
\dint s,\quad g\in E'(\rn),
\]
see \cite[Ch.~2]{BS} for further details.

\begin{remark}
The following Luxemburg representation formula is known: For an RIS $E(\rn)$ there exists a unique
RIS $\widetilde{E}(\real_+)$ such that
\beq
\|f\|_{E(\rn)} = \|f^\ast\|_{\widetilde{E}(\real_+)}.\label{1.4}
\eeq
Likewise, let $\widetilde{E}'(\real_+)$ be the Luxemburg representation for $E'(\rn)$, with
\[
\|h\|_{\widetilde{E}'(\real_+)} = \sup\left\{ \int_0^\infty h^\ast(t)f^\ast(t)\dint t:\quad f\in \widetilde{E}(\real_+), \ \|f\|_{\widetilde{E}(\real_+)}\leq 1\right\}.
\]

\end{remark}

\begin{example}\label{ex-Lorentz}
Let us mention some examples of RIS such as  ${L_{p}}$, classical Lorentz and Marcinkiewicz spaces ${\Lambda _{pq}}$ and  ${M_{p}}$, Orlicz spaces and other. Recall that the norms in generalized weighted Lorentz and Marcinkiewicz spaces  ${\Lambda _{q}\left(v\right)}$ and  ${M\left(v\right)}$ with a weight  ${v}$ are given by 
\begin{align}
\|f\|_{\Lambda_q(v)} & =\left(\int _0^\infty f^\ast\left(t\right)^{q}v\left(t\right) \dint t\right)^{1/q},\quad 1\le q<\infty; \label{1.5}
\\
\|f\|_{{M\left(v\right)}} &=\sup\left\{f^{\ast\ast}\left(t\right)v\left(t\right):\ t\in \real_+\right\},\label{1.6}
\end{align}
where 
\beq
f^{\ast\ast}\left(t\right)=\frac1t \int_0^t f^\ast\left(s\right) \dint s
\label{f**}
\eeq
is the well-known maximal function of $f^\ast$, that is, $f^{\ast}\le f^{\ast\ast}$, $f^{\ast\ast}$ is monotonically decreasing, whereas $t f^{\ast\ast}(t)$ is monotonically increasing.\\

For special weights we obtain a variety of Lorentz and Marcinkiewicz spaces: For instance,   $v_{pq}\left(t\right)=t^{{\frac{q}{p}-1}}$, $ 1\le p,q<\infty $, yields the classical Lorentz spaces
\[
\Lambda_{q}\left(v_{pq}\right)=\Lambda_{pq},\quad\text{in particular,}\quad \Lambda_q(v_{qq})=\Lambda_q(1)=L_q,
\]
whereas the choice   
\[
v_b\left(t\right)=\left[t^{{\gamma }} b\left(t\right)\right]^{q} t^{{-1}},\quad \gamma\in\real,
\]
and ${b\left(t\right)}$ a slowly varying function of logarithmic type, leads to the so-called Lorentz-Karamata spaces, see for example \cite{GNO,neves-01}. 
\end{example}

\subsection{Space of Bessel potentials}\label{sec-1-2}

The space of Bessel potentials is introduced by an integral representation over an RIS  $E=E\left(\rn\right)$. Here we need the notion of the generalized Bessel-McDonald kernel $G=G_\Phi$,
\beq
G(x)=\Phi(|x|), \quad x\in \rn\setminus\{0\},
\label{g1.2}
\eeq
where we make the following assumptions on the function $\Phi:\real_+\to [0,\infty)$ everywhere in the sequel: $\Phi$ is continuous, monotonically decreasing, and
  \beq
  0<\int_0^\infty \Phi(z) z^{n-1}\dint z<\infty.
  \label{g1.3}
  \eeq
  We denote by
  \beq
  \varphi(\tau)=\Phi\left((\tau/V_n)^{1/n}\right),\quad \tau >0,
\label{g1.4}
  \eeq
   so that  $\varphi$   is a positive continuous decreasing function such that
 \begin{equation*}
 0<\int_0^\infty \varphi(\tau)\dint \tau<\infty.
 \end{equation*}

\begin{definition}\label{defi-HGE}
Let $E=E(\rn)$ be an RIS, $G$ the generalized Bessel-McDonald kernel as above. Then 
\beq
H_{E}^{G}\left(\rn\right)\equiv H_{E}^G=\left\{u=G\ast f :\ f\in E\left(\rn\right) \right\},
\label{g1.1}
\eeq
equipped with the norm 
\begin{equation*}
\|u\|_{H_{E}^G}:=\inf \left\lbrace \|f\|_{E} :\quad f \in E(\rn);\quad G\ast f=u\right\rbrace .
\end{equation*}

Here
\[
u(x)=(G\ast f)\left(x\right)=\int _{\rn} G\left(x-y\right)f(y) \dint y.
\]
\end{definition}

Let $C(\rn)$ be the space of all complex-valued bounded uniformly continuous functions on $\rn$, equipped with the sup-norm as usual.  The following estimate holds for $u=G\ast f$, with fixed $T\in\real_+$:

\begin{equation*}
\|u\|_{C}:=\sup\limits_{x \in \rn} |u(x)|\leq c_0 \int _{0}^T \varphi(\tau)f^{\ast}(\tau)\dint \tau, \quad c_{0}=1+\left( \int _{T}^{\infty} \varphi\dint \tau\right) \left( \int _{0}^T \varphi\dint \tau\right)^{-1}, 
\end{equation*}
see \cite[(1.3)]{GoMa}. Thus, if we require that
\beq
\varphi \in\widetilde{E}'(0,T),
\label{g1.5}
\eeq
then for every  $f \in E(\rn)$ such that $G\ast f=u$, we have the inequality 
\beq
\|u\|_{C}\leq c_0\|\varphi\|_{\widetilde{E}'(0,T)}\|f^{\ast}\|_{\widetilde{E}(\real_+)}=c_0\|\varphi\|_{\widetilde{E}'(0,T)}\|f\|_{{E}(\rn)}.
\label{g1.6}
\eeq
We take the infimum over such functions $f$  and obtain
\beq
\|u\|_{C}\leq c_0\|\varphi\|_{\widetilde{E}'(0,T)}\|u\|_{H_E^G(\rn)}.
\label{g1.7}
\eeq

 Under the additional conditions \eqref{g1.15},  \eqref{g1.16} we have the embedding $ H^G_E(\rn)\hookrightarrow C(\rn)$ into the space of continuous uniformly bounded functions (see Remark \ref{rem-g-1.2} below). 

For later use, let us further recall the definition of differences of functions. If $f$
is an arbitrary function on $\rn$, $h\in\rn$ and $k\in\nat$, then
\beq
(\Delta_h^k f)(x):=\sum_{j=0}^{k}\,\binom{k}{j}\,(-1)^{k-j}\,
f(x+jh), \quad x\in\rn.
\label{diff}
\eeq
Note that $\Delta_h^k$ can also be defined iteratively via
\[
(\Delta_h^1 f)(x)=f(x+h)-f(x) \quad \text{and} \quad
(\Delta_h^{k+1} f)(x)=\Delta_h^1(\Delta_h^k f)(x), \quad k\in\nat.
\]
For convenience we may write $\Delta_h$ instead of
$\Delta_h^1$. Accordingly, the $k$-th modulus of smoothness of a
function $f\in C(\rn)$ is defined by
\begin{equation}
\omega_k(f;t)=\sup_{|h|\leq t} \left\|\Delta_h^k f\right\|_{C(\rn)}, \quad
t>0, \label{modulus}
\end{equation}
 such that $\omega_1(f;t)=\omega(f;t)$. Recall that \eqref{diff} immediately gives
\beq
\omega_k(f;\lambda t) \leq \ (1+\lambda)^k \ \omega_k(f;t),\quad \lambda>0.\label{omega-dil}
\eeq

\subsection{The cone of moduli of continuity for potentials}
\label{sec-1-3}

Let $k\in\nat$, $T>0$, and $H^G_E(\rn)$ as in Definition~\ref{defi-HGE}. 
We introduce the following cone of moduli of continuity of potentials,
\beq
M=\left\{h:\real_+\to\real_+: \ h(t)= \omega_k\left(u; t^{1/n}\right), \ t\in (0,T)\quad\text{for some}\ u\in H^G_E(\rn)\right\},
\label{g1.8}
\eeq
equipped with the functional
\beq
\label{g1.9}
\varrho_M(h) = \inf\left\{ \|u\|_{H^G_E}: \ u\in H^G_E(\rn), \ \omega_k\left(u; t^{1/n}\right)= h(t),\ t\in (0,T)\right\},\quad h\in M.
\eeq
Plainly, $M=M(k,E,G,T)$, but we shall usually write $M$ for convenience. 

Our next aim is to characterize the above cone by a simpler expression. For this purpose we define the notion of covering and equivalence of cones. We claim that the cone $M=M(0,T)$  with $\varrho_M$ is covered by the cone $K=K(0,T)$ with $\varrho_K$, with covering constant $c \in (0, \infty)$, written as $M\overset{c}{\prec}K$, 
if for any function $h\in M$ there exists some function $g\in K$, such that 
\beq\label{g1.13}
\varrho_K(g)\leq c \varrho_M(h)\qquad \text{and}\qquad g(t)\geq h(t), \quad t\in (0,T).
\eeq
We write   $M\prec K$    if there exists $c \in (0, \infty)$, such that $M\overset{c}{\prec}K$.  We denote as $c_0(M\prec K)$  the best constant of covering  $M\prec K$, that is
$$c_0(M\prec K)=\inf \left\lbrace c \in \real_+: M\overset{c}{\prec}K\right\rbrace.$$
In case of mutual covering we call it {\em equivalence of cones}, that is
\beq
M\approx K\qquad \iff\qquad M\prec K\prec M.
\label{g1.14}
\eeq

We need some further notation. For $\varphi$ as above, let
\beq
\label{g1.12}
\Omega_\varphi(t,\tau)=\frac{\varphi(\tau)}{1+\left(\frac{\tau}{t}\right)^{k/n}},\quad t,\tau>0,
\eeq
and
\[
\widetilde{E}_0(0,T) = \left\{\sigma \in \widetilde{E}(0,T): \sigma\geq 0, \ \sigma\ \text{monotonically decreasing}\right\}.
\]
Then the new cone $K=K(E,\varphi,T)$ is given by
\beq
K=\left\{h:\real_+\to\real_+: \ h(t)=\int_0^T \Omega_\varphi(t,\tau) \sigma(\tau)\dint \tau\quad\text{for some}\ \sigma \in \widetilde{E}_0(0,T)\right\},
\label{g1.10}
\eeq
equipped with the functional
\beq
\label{g1.11}
\varrho_K(h)=\inf\left\{ \left\|\sigma\right\|_{\widetilde{E}(0,T)}: \sigma \in \widetilde{E}_0(0,T), \quad \int_0^T \Omega_\varphi(t,\tau) \sigma(\tau)\dint \tau = h(t)\right\}.
\eeq

Our first main result reads as follows.
\begin{theorem}
\label{theo-g-1.1}
Let $\Phi$ be a standard function with \eqref{g1.3}, $\varphi$, given by \eqref{g1.4}, satisfy \eqref{g1.5}.
\bli
\item[{\bfseries\upshape 1.}]
Assume, in addition, that $\Phi\in C^k(\real_+),$  $k\in\nat$, and $\Phi$ satisfies the following estimates for some positive real numbers $a_1, a_2, z_1$,
\begin{align} 
\max_{1\leq j\leq k} \left( z^{2j} \left|\Phi_j(z)\right|\right) \leq & \ a_1 \ \Phi(z),\quad z\in (0,z_1], \label{g1.15}\\
\max_{1\leq j\leq k} \left( z^{2j} \left|\Phi_j(z)\right|\right) \leq & \ a_2\  z^k \Phi(z),\quad z>z_1, \label{g1.16}
\end{align}
where
\[
\Phi_j(z)=\left(\frac{1}{z}\ \frac{\dint}{\dint z}\right)^j \Phi(z).
\]
Then, for the cones $M$ given by \eqref{g1.8} and $K$ given by  \eqref{g1.10} we have the covering $M \prec K$. 
\item[{\bfseries\upshape 2.}]
If $\Phi$ additionally satisfies the following estimate for some positive real number $\delta_1$: 
\begin{align} 
  (-1)^k z^k \Phi^{(k)}(z) \geq & \ \delta_1\ \Phi(z), \quad z\in (0,z_1],\label{g1.17}
\end{align}
then $M$ and $K$ are equivalent. The constants  in the condition of mutual coverings  \eqref{g1.14} depend on $k$,  $n$, $T$, $a_1,a_2$, $z_1,\delta_1$ and on the norm of the embedding operator \eqref{g1.7}.
\eli
\end{theorem}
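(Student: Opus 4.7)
I will treat the two assertions separately and indicate only the skeleton.

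\textbf{Part 1 ($M \prec K$).} Given an arbitrary representation $u = G \ast f \in H_E^G(\real^n)$, my plan is to set
\[
h(t) := c_\ast \int_0^T \Omega_\varphi(t,\tau)\, f^\ast(\tau)\, \dint \tau, \qquad t \in (0,T),
\]
together with $\sigma := c_\ast\, f^\ast \chi_{(0,T)} \in \widetilde E_0(0,T)$, for a constant $c_\ast$ to be determined. Luxemburg's representation \eqref{1.4} then gives $\varrho_K(h) \leq \|\sigma\|_{\widetilde E(0,T)} \leq c_\ast \|f\|_{E(\real^n)}$. Taking the infimum first over $f$ with $G\ast f = u$ and then over $u$ producing a given $h_0 \in M$ will yield the covering, provided one has the pointwise estimate $\omega_k(u;t^{1/n}) \leq h(t)$ for $t \in (0,T)$.

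To produce that estimate I would fix $|\eta| \leq t^{1/n}$, write $(\Delta_\eta^k u)(x) = ((\Delta_\eta^k G) \ast f)(x)$, and split $G = G\chi_{B_R} + G\chi_{\real^n \setminus B_R}$ with $V_n R^n \sim t$. On the near field $B_R$ I would use the trivial bound $|\Delta_\eta^k G(y)| \leq \sum_{j=0}^{k}\binom{k}{j} G(y+j\eta)$ together with the Hardy--Littlewood rearrangement inequality applied termwise; after the change of variables $\tau = V_n \rho^n$ this produces a contribution $\lesssim \int_0^{ct}\varphi(\tau)f^\ast(\tau)\dint\tau$, which matches the portion of $h(t)$ where $\Omega_\varphi(t,\tau) \sim \varphi(\tau)$. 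On the far field, iterated Taylor expansion gives $|\Delta_\eta^k G(y)| \leq |\eta|^k \sup_{|\xi| \leq k|\eta|} |D^k G(y+\xi)|$; converting conditions \eqref{g1.15}--\eqref{g1.16} on the radial operators $\Phi_j$ into Cartesian bounds $|D^k G(z)| \lesssim |z|^{-k}\Phi(|z|)$ for $|z| \leq z_1$ and $|D^k G(z)| \lesssim \Phi(|z|)$ for $|z| > z_1$ turns the far-field contribution into $\lesssim t^{k/n}\int_{ct}^\infty \tau^{-k/n}\varphi(\tau)f^\ast(\tau)\dint\tau$, which matches the region of $h(t)$ where $\Omega_\varphi(t,\tau) \sim (t/\tau)^{k/n}\varphi(\tau)$. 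Summing the two pieces yields the required pointwise estimate for an appropriate $c_\ast$.

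\textbf{Part 2 ($K \prec M$).} Given $\sigma \in \widetilde E_0(0,T)$, the plan is to take $f$ radial and non-increasing on $\real^n$ with $f^\ast = \sigma$ on $(0,T)$ and $f$ supported in the ball of volume $T$, so that $\|f\|_{E(\real^n)} = \|\sigma\|_{\widetilde E(0,T)}$, and to set $u := G \ast f \in H_E^G$. Part 1 already delivers the upper bound $\omega_k(u; t^{1/n}) \lesssim \int_0^T \Omega_\varphi(t,\tau)\sigma(\tau)\dint\tau$. For a matching lower bound I would test $(\Delta_\eta^k u)(0)$ along a fixed axis $\eta = t^{1/n} e_1$; condition \eqref{g1.17}, which forces $(-1)^k\Phi^{(k)}(z)$ to stay above $\delta_1 z^{-k}\Phi(z)$ near zero, removes destructive cancellation of the integrand on a cap of radius $\sim t^{1/n}$ and yields
\[
|(\Delta_\eta^k u)(0)| \gtrsim \int_0^T \Omega_\varphi(t,\tau)\sigma(\tau)\dint\tau.
\]
Setting $\tilde h(t) := C\,\omega_k(u;t^{1/n}) \in M$ for a suitable $C$ gives $\tilde h \geq h$ pointwise and $\varrho_M(\tilde h) \lesssim \|\sigma\|_{\widetilde E(0,T)}$, hence $K \prec M$.

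\textbf{Main obstacle.} The heaviest technical step is the translation of the one-dimensional conditions on $\Phi_j = (z^{-1}\partial_z)^j\Phi$ into usable Cartesian derivative bounds on $G(x) = \Phi(|x|)$ of order exactly $k$. The operators $\Phi_j$ are designed so that the many terms arising from differentiating a radial function telescope cleanly into the clean estimates displayed above, but the proof requires careful bookkeeping, separation of the two regimes $|z| \lessgtr z_1$, and eventually brings in the constants $a_1, a_2, z_1$. For Part 2 the analogous difficulty is that one must ensure that condition \eqref{g1.17} survives the integration against $f$ without cancellation, so that the resulting lower bound has exactly the same order as the upper bound. Once these kernel estimates are in place, the rearrangement steps are standard.
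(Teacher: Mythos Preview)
Your plan is correct and matches the paper's proof at the structural level: both parts reduce to the two pointwise kernel estimates
\[
\omega_k(u;t^{1/n}) \le c_1 \int_0^T \Omega_\varphi(t,\tau)f^\ast(\tau)\,\dint\tau
\qquad\text{and}\qquad
\omega_k(u;t^{1/n}) \ge c_2 \int_0^T \Omega_\varphi(t,\tau)\sigma_0(\tau)\,\dint\tau,
\]
after which the covering argument is exactly the near-minimizer bookkeeping you describe (pick $u_\varepsilon$, $f_\varepsilon$ or $\sigma_\varepsilon$, apply the estimate, pass to norms). The paper, however, does \emph{not} prove those two estimates here: it imports them verbatim from \cite{GoMa} and \cite{GoMa-2,GoHa-3} (stated as \eqref{g-ad2.28}--\eqref{g-ad2.29} in Remark~\ref{g-ad2.7}) and performs only the formal covering step. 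Your near-field/far-field split, the Taylor bound on $\Delta_\eta^k G$, and the translation of the radial conditions on $\Phi_j$ into Cartesian bounds on $D^k G$ are precisely the content of those cited papers, so you are reconstructing the referenced material in-line.

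Two small points. First, in Part~2 the paper only needs $f^\ast \le \sigma$ (not equality), which sidesteps the right-continuity issue for $\sigma$; your choice $f^\ast=\sigma$ is fine modulo that technicality. Second, the upper bound you invoke in Part~2 is unnecessary for $K\prec M$; only the lower bound is used there.
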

\begin{remark}\label{g-ad2.7} The proof is based on the following crucial estimates.
\bli
\item[{\bfseries\upshape 1.}]  Under the assumptions of Theorem \ref{theo-g-1.1}, Part 1, for any  $u \in H_E^G(\rn)$,   that is $u=G\ast f,\, f \in E(\rn),$
the estimate holds
\beq 
\omega_k(u, t^{1/n})\leq c_{1}\int_{0}^{T}\Omega_{\varphi}(t, \tau)f^{*}(\tau)\dint \tau, \quad t \in (0,T), \label{g-ad2.28}
\eeq
see \cite{GoMa}, with $c_1=c_1(k, n,z_1,a_1,a_2, T,  \|\id\|)\in \real_+$, where $\|\id\|$ refers to the norm of embedding operator in \eqref{g1.7}.

\item[{\bfseries\upshape 2.}]  Under the assumptions of Theorem \ref{theo-g-1.1}, Part 2,  for  $\sigma_0 \in \widetilde{E}_0(0,T)$ we denote 
$\sigma(t) =\sigma_0(t),\, t \in (0, T),\quad \sigma(t)=0,\, t \geq T$. Then, there exists $f \in E(\rn)$, such that  $f^{*}(t)\leq \sigma(t)$, $t \in \real_+$, and for $u=G\ast f \in H_E^G(\rn)$
\beq 
\omega_k(u, t^{1/n})\geq c_{2}\int_{0}^{T}\Omega_{\varphi}(t, \tau)\sigma_0(\tau)\dint \tau,\quad t \in (0,T), \label{g-ad2.29}
\eeq
with $c_2=c_2(k, n,z_1,a_1,\delta_1, T)>0$, see \cite{GoMa-2, GoHa-3}.
Moreover, for the best constants of coverings in Theorem \ref{theo-g-1.1} we have
\beq \label{g-ad2.30}
c_0(M\prec K)\leq c_1;\quad c_0(K\prec M)\geq c_2^{-1}.
\eeq	 

\eli
\end{remark}

\begin{proof} {\em Step 1}.\quad We first show that under the assumptions \eqref{g1.15}, \eqref{g1.16} there is the covering $M \overset{c}{\prec}K$ for the cones given by \eqref{g1.8} and \eqref{g1.10} with any constant of covering  $c>c_1$. Let $c=(1+\varepsilon)^{2}c_1,\quad \varepsilon \in (0, 1).$ For $h\in M$ there exists $u_\varepsilon \in H_E^G(\rn)$  such that
  \begin{align}
    \omega_k(u_\varepsilon; t^{1/n}) & = h(t),\, t\in (0,T), \quad\|u_\varepsilon\|_{H_E^G}\leq (1+\varepsilon)\varrho_M(h).\nonumber    
  \end{align}
  Then, for $u_\varepsilon \in H_E^G(\rn)$, we find $f_\varepsilon \in E(\rn)$, such that
 $$u_\varepsilon=G\ast f_\varepsilon,\quad \|f_\varepsilon\|_E \leq (1+\varepsilon)\|u_\varepsilon\|_{H^G_E} \ \leq \ (1+\varepsilon)^2\varrho_M(h).$$
 By \eqref{g-ad2.28}  for $u=u_\varepsilon$ we have the estimate
  \begin{equation*}
  h(t)\leq  g_{\varepsilon}(t):=c_1\int_0^T \Omega_\varphi(t,\tau) f_{\varepsilon}^\ast(\tau)\dint \tau=\int_0^T \Omega_\varphi(t,\tau) \sigma_{\varepsilon, 0}(\tau)\dint \tau,\quad t\in (0,T).
  \end{equation*}
  Here
  \[
  \sigma_{\varepsilon, 0}=c_1f^{\ast}_{\varepsilon} \in \widetilde{E}_0(0,T);\quad  \left\|\sigma_{\varepsilon, 0}\right\|_{\widetilde{E}(0,T)} \leq c_1\|f^{\ast}_\varepsilon\|_{\widetilde{E}(\real_+)}= c_1\|f_\varepsilon\|_{E}.
  \]
  We see that
  $$
   h\leq  g_{\varepsilon} \in K;\quad \varrho_{K}(g_{\varepsilon})\leq \|\sigma_{\varepsilon, 0}\|_{\widetilde{E}(0,T)} \leq c_1\|f_{\varepsilon}\|_{E}\leq c_1 (1+\varepsilon)^2\varrho_M(h)=c\varrho_M(h).
 $$
  
These estimates show that
  \[
  M \overset{c}{\prec}K\quad \text{for all}\quad 
  c>c_1 \quad\text{implies}\quad 
  c_0(M\prec K)\leq c_1.
  \]

  {\em Step 2}.\quad We will show that under assumptions of Theorem \ref{theo-g-1.1}, Part 2,  there is the covering
  \beq\label{g-ad2.31} 
  K\overset{c}{\prec} M\quad \text{for all}\quad 
  c>c_2^{-1},\quad\text{which implies}\quad 
  c_0(K\prec M)\leq c_2^{-1}.
    \eeq
     Let $c=(1+\varepsilon)c_2^{-1},\, \varepsilon \in (0,1)$. For $g\in K$ there exists  $\sigma_{\varepsilon, 0} \in\widetilde{E}_0(0,T)$, such that
  \begin{align*}
   g(t)=& \int_0^T \Omega_\varphi(t,\tau)\sigma_{\varepsilon, 0}(\tau)\dint\tau,\, t \in (0,T);\quad \|\sigma_{\varepsilon, 0}\|_{\widetilde{E}(0,\infty)}\leq (1+\varepsilon)\varrho_K(g).
    \end{align*}
    Now, let                                  
    \begin{align*}    
    \sigma_{\varepsilon}(t) = \begin{cases} \sigma_{\varepsilon,0}(t), & t \in (0,T), \\ 0, & t \geq T.\end{cases}
  \end{align*}
  Then, 
  $$\|\sigma_{\varepsilon}\|_{\widetilde{E}(0,\infty)}=\|\sigma_{\varepsilon, 0}\|_{\widetilde{E}(0,T)}\leq (1+\varepsilon)\varrho_K(g).$$
 According to \eqref{g-ad2.29} with $\sigma_{0}=\sigma_{\varepsilon, 0}$  we find $f_\varepsilon \in E(\rn)$, such that $f_{\varepsilon}^{\ast}\leq \sigma_{\varepsilon}$ and for $u_{\varepsilon}=G\ast f_{\varepsilon}$  the estimate holds  $\omega_{k}(u_{\varepsilon}, t^{1/n})\geq c_{2}g(t), \, t \in (0, T)$. 
 So, we denote
 
 $$h_{\varepsilon}(t):=\omega_{k}(c_2^{-1}u_{\varepsilon}, t^{1/n})=c_2^{-1}\omega_{k}(u_{\varepsilon}, t^{1/n})\geq g(t),\quad t \in (0,T).$$
  Moreover,  $h_{\varepsilon} \in M$, and 
   \begin{align*}
  \varrho_M(h_{\varepsilon}) & \leq \left\| c_2^{-1} u_{\varepsilon}\right\|_{H^G_E} \leq \|c_2^{-1}f_{\varepsilon}\|_E = c_2^{-1}\|f_{\varepsilon}^\ast\|_{\widetilde{E}(0,\infty)}\\
 & \leq  c_2^{-1}\|\sigma_{\varepsilon}\|_{\widetilde{E}(0,\infty)} \leq (1+\varepsilon)c_2^{-1}\varrho_K(g). 
 \end{align*}
  
These estimates show that
 \[ 
 K\overset{c}{\prec}M\quad\text{for all}\quad 
 c>c_{2}^{-1} \quad\text{which implies}\quad 
 c_{0}(K\prec M)\leq c_{2}^{-1}.
 \]
\end{proof}

\begin{remark}
	\label{rem-g-1.2}
	For $\Omega_{\varphi}(t, \tau)$, see \eqref{g1.12}, $\varphi \in \widetilde{E}'(0,T)$,  see \eqref{g1.5}, and $\sigma \in \widetilde{E}_0(0,T)$ the following assertions hold
	\begin{equation*}
	\Omega_{\varphi}(t, \tau)\sigma(\tau)\leq \varphi(\tau)\sigma(\tau) \in L_1(0,T);\quad  \Omega_{\varphi}(t, \tau)\sigma(\tau)\rightarrow 0\,(t\rightarrow +0).
	\end{equation*}
	Therefore, by Lebesgue's dominated convergence theorem \eqref{g1.10} implies
	\begin{equation*}
	h \in K \Rightarrow h(t)\rightarrow 0 (t\rightarrow +0).
	\end{equation*}
	Together with the covering $M\prec K,$ see \eqref{g1.13}, it leads to 
	\begin{equation*}
	h \in M \Rightarrow h(t)\rightarrow 0 (t\rightarrow +0).
	\end{equation*}
	It means that under the assumptions \eqref{g1.15}, \eqref{g1.16}
	\begin{equation*}
	\omega_k(u,t)\rightarrow 0 (t\rightarrow +0)\Rightarrow H^G_E(\rn)\hookrightarrow C(\rn).
	\end{equation*}
\end{remark}

\begin{example}\label{exm-g-1.3}
  For the classical Bessel potentials the corresponding Bessel-McDonald kernels are determined by \eqref{g1.2} with
  \beq
  \Phi_\nu(x)=H_\nu(x),\quad x\in\real_+,\quad \nu =\frac{n-\alpha}{2},\quad 0<\alpha<n,
  \label{g1.18}
  \eeq
  where
$H_\nu(x)=x^{-\nu}  K_{\nu }(x)$, $x>0$, and $K_\nu$ is the modified Bessel function, 
\beq
K_{\nu }\left(\varrho \right)=\frac12 \left(\frac{\varrho}{2}\right)^{\nu }\int\limits_0^\infty \xi^{-\nu -1} e^{-\xi - \varrho^2/4\xi } \dint\xi
\label{1.13}
\eeq
cf. \cite{goldman-24,GoHa, nik}. From the well-known properties of these functions, it follows easily that  conditions \eqref{g1.15}--\eqref{g1.16} are satisfied and that
\beq\label{g1.19}
\Phi_\nu(y)\simeq \begin{cases} y^{-2\nu},& y\in (0,y_1],\\ y^{-\nu-\frac12} e^{-y}, & y>y_1,\end{cases} 
\eeq
for  some appropriate $y_1>0$ (in the sense of two-sided estimates with constants depending only on $\nu$ and $y_1$). Note that \eqref{g1.4} reads as
\beq
\label{g1.20}
\varphi(\tau) \simeq \tau^{\alpha/n-1},\quad \tau\in (0,T],
\eeq
in this case which implies that \eqref{g1.7} is true. In order to apply Theorem~\ref{theo-g-1.1} we need to verify \eqref{g1.5} and have found in this case that
\[
\eqref{g1.5}\quad\text{if, and only if,}\quad  
\tau^{\alpha/n-1}\in \widetilde{E}'(0,T),\quad
T\in\real_+.
\]
Recall that $\widetilde{E}'(0,T)$ is the restriction of $\widetilde{E}'(\real_+)$ to $(0,T)$.
\end{example}

\begin{example}\label{exm-g-1.4}
  Let $\Phi\in C^k(\real_+)$ satisfy assumptions \eqref{g1.3}--\eqref{g1.5} and \eqref{g1.16} for some $z_1\in\real_+$. Assume $T=V_n z_1^n$, and
  \beq
  \label{g1.21}
  \Phi(z)=z^{\alpha-n} \Lambda (z),\quad 0<\alpha<n,\quad z\in (0,z_1].
  \eeq
  Here $\Lambda \in C^k(0,z_1]$ is a positive function, with
    \beq
    \label{g1.22}
    z^j \Lambda^{(j)}(z)=\varepsilon_j(z)\Lambda(z), \quad \text{with}\quad \varepsilon_j(z)\xrightarrow[z\to 0+]{} 0,\quad j=1, \dots, k.
    \eeq
    Then $\Lambda$ is a slowly varying function on $(0,z_1]$, i.e., for all $\gamma>0$,
      \beq\begin{split}
      \label{g1.23}
      z^\gamma \Lambda(z)\quad \text{is monotonically increasing,}\\ \quad z^{-\gamma} \Lambda (z)\quad\text{is monotonically decreasing}.\end{split}
      \eeq
      We further conclude that
      \beq
      \label{g1.24}
      \varphi(\tau)=\tau^{\alpha/n-1} \lambda(\tau),\quad \lambda(\tau)=\Lambda\left(\left(\frac{\tau}{V^n}\right)^{1/n}\right),\quad \tau\in (0,T].
      \eeq
The function $\lambda$ is slowly varying as well on $(0,T]$, and \eqref{g1.7} is satisfied.       
      One verifies that $\Phi$ satisfies conditions \eqref{g1.15} and \eqref{g1.17} such that Theorem~\ref{theo-g-1.1} is applicable whenever \eqref{g1.5} is true, where $\varphi$ is given by \eqref{g1.24}. 
      
\begin{proof}
   
      We apply the Leibniz formula to $\Phi$ given by \eqref{g1.21} and get
      \begin{align*}
        \Phi^{(k)}(z)= &\ (\alpha-n)\cdots (\alpha-n-(k-1)) z^{\alpha-n-k}\Lambda(z) \\
        & + \sum_{j=1}^k C_{k,j} (\alpha-n)\cdots (\alpha-n-(k-j-1)) z^{\alpha-n-(k-j)}\Lambda^{(j)}(z),
      \end{align*}
      where $C_{k,j}$ are the binomial coefficients. Therefore, by \eqref{g1.22},
     \begin{align*}
       z^k\Phi^{(k)}(z)= &\ \Phi(z)\Big[ (-1)^k (n+k-1-\alpha)\cdots(n-\alpha)\\
         &+ \sum_{j=1}^k C_{k,j} (\alpha-n)\cdots (\alpha-n-(k-j-1)) \varepsilon_j(z)\Big].
      \end{align*}
      This assertion implies \eqref{g1.17} for sufficiently small $z_1>0$ since $\varepsilon _j(z)\to 0$ for $z\to 0+$, $j=1, \dots, k$. Note that
      \beq\label{g5.6}
      \left(z^{-1} \frac{\dint}{\dint z}\right)^m z^{\alpha-n} = (\alpha-n)\cdots (\alpha-n-2(m-1)) z^{\alpha-n-2m}.
      \eeq
      Furthermore, condition \eqref{g1.22} implies for $m=1, \dots, k$,
      \beq\label{g5.7}
      \left(z^{-1} \frac{\dint}{\dint z}\right)^m \Lambda(z)=\delta_m(z) z^{-2m} \Lambda(z),
      \eeq
      where $\delta_m(z)\to 0$ for $z\to 0+$. An analogue of Leibniz' rule together with \eqref{g5.6}, \eqref{g5.7} gives
      \begin{align*}
        & \left(z^{-1}\frac{\dint}{\dint z}\right)^l \Phi(z) \\
        & =  \ \sum_{j=0}^l C_{l,j}\left[\left(z^{-1}\frac{\dint}{\dint z}\right)^{l-j} z^{\alpha-n}\right] \left[\left(z^{-1} \frac{\dint}{\dint z}\right)^j \Lambda(z)\right]   \\
        & =   \ (\alpha-n)\cdots (\alpha-n-2(l-1))z^{\alpha-n-2l}\Lambda(z)  \\
        & \quad~ + \sum_{j=1}^l C_{l,j}  (\alpha-n)\cdots (\alpha-n-2(l-j-1))z^{\alpha-n-2(l-j)}\delta_j(z)z^{-2j}\Lambda(z)\\
          & =  \ \Phi(z) z^{-2l} F(z), 
        \end{align*}
        where $$F(z)=(\alpha-n)\cdots (\alpha-n-2(l-1))+ \sum_{j=1}^l C_{l,j}  (\alpha-n)\cdots (\alpha-n-2(l-j-1))\delta_j(z).$$
      Since the term $F(z)$ is bounded on $(0,z_1]$, we obtain \eqref{g1.15}. Thus Theorem~\ref{theo-g-1.1} can be applied. Finally $\varphi$ is determined by \eqref{g1.24} with $0<\alpha<n$, and $\lambda$ being slowly varying on $(0,T]$, and we conclude
        \beq
        \label{g5.8}
\int_0^t \varphi(\tau)\dint \tau = \int_0^t \lambda(\tau)\tau^{\alpha/n-1} \dint \tau \simeq \lambda(t) t^{\alpha/n} = \varphi(t) t,
        \eeq
        where the involved constants do not depend on $t\in (0,T)$, see also Remark~\ref{rem-g-5.1} below. Thus condition \eqref{g1.7} is satisfied and we have the equivalence that $ H^G_E(\rn)\hookrightarrow C(\rn)$ holds if, and only if,
        \[
        \tau^{\alpha/n-1} \lambda(\tau) \in \widetilde{E}'(0,T).
        \]
     
 \end{proof}   
\end{example}

\begin{remark}\label{rem-g-5.1}
  In \eqref{g5.8} we used some well-known properties of slowly varying functions $\lambda$, which are positive on $(0,T)$: for any $\gamma>0$,
    \begin{align}
    \label{g5.9}
\int_0^t \tau^{\gamma-1} \lambda(\tau)\dint\tau & \simeq t^\gamma \lambda(t),\\
    \label{g5.10}
\int_t^T \tau^{-\gamma-1} \lambda(\tau)\dint\tau &\leq c_\gamma t^{-\gamma} \lambda(t),\\
    \label{g5.11}
\lambda(t) & = {\mathbf o}\left(\int_t^T \tau^{-1} \lambda(\tau)\dint\tau\right)\quad\text{for}\quad t\to 0+.
   \end{align}

\end{remark}

Now we describe some important characteristic of the smoothness of functions from $H^{G}_{E},$ the uniform majorant for moduli of continuity $\Omega^{k}_{EG}(t^{1/n}),\, t \in (0, T),$ namely,
\beq
\label{g-ad2.47}
\Omega^{k}_{EG}(t^{1/n})=\sup \left\lbrace \omega_k(u, t^{1/n}):\, u \in H^{G}_{E}(\rn);\, \|u\|_{H^{G}_{E}}\leq 1\right\rbrace.
\eeq
In case of $k=1$ this is a variant of the continuity envelope function studied in \cite{Ha-crc,T-func} in general, and in \cite{GoHa} for Bessel potentials.

\begin{theorem}
	\label{theo-g-ad-2.11}
	Let $\Phi$ be a standard function with \eqref{g1.3}, $\varphi$, given by \eqref{g1.4}, satisfy \eqref{g1.5}.
	\bli
	\item[{\bfseries\upshape 1.}]
Under the assumptions of Theorem \ref{theo-g-1.1}, Part 1,  the following estimate holds:
	\beq
	\Omega^{k}_{EG}(t^{1/n})\leq c_{1}\|\Omega_{\varphi}(t, \cdot)\|_{\widetilde{E}'(0, T)},\quad t \in (0, T).
	 \label{g-ad2.48}
	\eeq

	\item[{\bfseries\upshape 2.}]
	Under the assumptions of Theorem \ref{theo-g-1.1}, Part 2, we have both estimates: \eqref{g-ad2.48} and 
	\beq
	\Omega^{k}_{EG}(t^{1/n})\geq c_{2}\|\Omega_{\varphi}(t, \cdot)\|_{\widetilde{E}'(0, T)},\quad t \in (0, T).
	\label{g-ad2.49}
	\eeq
	 The constants  $c_{1}, c_{2}$ are the same as in Theorem \ref{theo-g-1.1}.   
	\eli
\end{theorem}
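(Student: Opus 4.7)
The plan is to derive both inequalities directly from the pointwise estimates \eqref{g-ad2.28} and \eqref{g-ad2.29} collected in Remark~\ref{g-ad2.7}, coupled with the duality formula defining the associate norm $\widetilde{E}'(0,T)$ and the monotonicity of $\Omega_\varphi(t,\cdot)$.

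For Part~1, I would take an arbitrary $u\in H^G_E(\rn)$ with $\|u\|_{H^G_E}\leq 1$ and, for $\varepsilon>0$, pick $f\in E(\rn)$ with $G\ast f=u$ and $\|f\|_E\leq 1+\varepsilon$. Then \eqref{g-ad2.28} yields $\omega_k(u,t^{1/n})\leq c_1 \int_0^T \Omega_\varphi(t,\tau)f^\ast(\tau)\dint\tau$. The key observation is that $\Omega_\varphi(t,\cdot)$ is itself decreasing on $(0,T)$, since $\varphi$ is decreasing while $1+(\tau/t)^{k/n}$ is increasing in $\tau$; thus $\Omega_\varphi(t,\cdot)$ equals its own rearrangement, and lies in $\widetilde{E}'(0,T)$ thanks to $\Omega_\varphi(t,\tau)\leq\varphi(\tau)$ and \eqref{g1.5}. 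The duality definition of $\widetilde{E}'(0,T)$ then provides $\int_0^T\Omega_\varphi(t,\tau)f^\ast(\tau)\dint\tau\leq\|\Omega_\varphi(t,\cdot)\|_{\widetilde{E}'(0,T)}\|f\|_E$. Sending $\varepsilon\to 0$ and taking the supremum over admissible $u$ in \eqref{g-ad2.47} produces \eqref{g-ad2.48}.

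For Part~2, I would dualize. Given $\sigma_0\in\widetilde{E}_0(0,T)$ with $\|\sigma_0\|_{\widetilde{E}(0,T)}\leq 1$, estimate \eqref{g-ad2.29} supplies $f\in E(\rn)$ with $f^\ast\leq\sigma$ (the zero-extension of $\sigma_0$) and a potential $u=G\ast f$ satisfying $\omega_k(u,t^{1/n})\geq c_2\int_0^T\Omega_\varphi(t,\tau)\sigma_0(\tau)\dint\tau$. The bound $f^\ast\leq\sigma$ forces $\|u\|_{H^G_E}\leq\|f\|_E=\|f^\ast\|_{\widetilde{E}(\real_+)}\leq\|\sigma_0\|_{\widetilde{E}(0,T)}\leq 1$, so this $u$ is admissible in the supremum \eqref{g-ad2.47}. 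Hence $\Omega^k_{EG}(t^{1/n})\geq c_2\int_0^T\Omega_\varphi(t,\tau)\sigma_0(\tau)\dint\tau$. Taking the supremum over admissible $\sigma_0$ yields \eqref{g-ad2.49} once one verifies
\[
\sup\left\{\int_0^T \Omega_\varphi(t,\tau)\sigma_0(\tau)\dint\tau:\ \sigma_0\in\widetilde{E}_0(0,T),\ \|\sigma_0\|_{\widetilde{E}(0,T)}\leq 1\right\} = \|\Omega_\varphi(t,\cdot)\|_{\widetilde{E}'(0,T)}.
\]

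The main (mild) obstacle is this last identity, which I expect to be the only delicate point of the argument. The inequality $\leq$ is immediate from the duality definition, since any admissible $\sigma_0$ coincides with its own rearrangement. For $\geq$, given any test $g\in\widetilde{E}(0,T)$ with $\|g\|_{\widetilde{E}(0,T)}\leq 1$ entering the definition of $\|\Omega_\varphi(t,\cdot)\|_{\widetilde{E}'(0,T)}$, one passes to $g^\ast$, which lies in $\widetilde{E}_0(0,T)$ with the same norm, and applies the Hardy--Littlewood--Pólya inequality to the already decreasing function $\Omega_\varphi(t,\cdot)$. This recovers the full associate norm via decreasing test functions and closes the argument; the constants $c_1,c_2$ are evidently the same as those featuring in Theorem~\ref{theo-g-1.1}.
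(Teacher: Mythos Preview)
Your proposal is correct and follows essentially the same route as the paper's proof: both parts are derived directly from the pointwise bounds \eqref{g-ad2.28} and \eqref{g-ad2.29} together with the duality formula for $\widetilde{E}'(0,T)$, exploiting that $\Omega_\varphi(t,\cdot)$ is decreasing. Your justification of the identity $\|\Omega_\varphi(t,\cdot)\|_{\widetilde{E}'(0,T)}=\sup_{\sigma_0\in\widetilde{E}_0,\|\sigma_0\|\leq 1}\int_0^T\Omega_\varphi(t,\tau)\sigma_0(\tau)\dint\tau$ via Hardy--Littlewood is exactly what the paper invokes as a ``well-known formula''.
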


\begin{proof} {\em Step 1}.\quad Let $u \in H^{G}_{E}(\rn)$.
	For any $f\in {E}(\rn)$ such that  $G\ast f=u$
	we have the estimate, see  \eqref{g-ad2.28},
	\begin{align*}
 \omega_k(u, t^{1/n})\leq c_{1}\int_{0}^{T}\Omega_{\varphi}(t,\tau)f^{\ast}(\tau)\dint \tau
& \leq c_{1}\|\Omega_{\varphi}(t, \cdot)\|_{\widetilde{E}'(0, T)}\|f^{\ast}\|_{\widetilde{E}(0, T)} \\
& \leq  c_{1}\|\Omega_{\varphi}(t, \cdot)\|_{\widetilde{E}'(0, T)}\|f\|_{E(\rn)}.
		\end{align*}
		Therefore,    
			\begin{equation*}
		\omega_k(u, t^{1/n})\leq  c_{1}\|\Omega_{\varphi}(t, \cdot)\|_{\widetilde{E}'(0, T)}\|u\|_{H^G_E(\rn)}, \quad\forall u \in H^G_E(\rn).
		\end{equation*}
		This yields \eqref{g-ad2.48}.\\
		
		{\em Step 2}.\quad
	Note that $\Omega_{\varphi}(t, \tau)\geq 0$  decreases as a function of $\tau \in (0,T),$ so that we have by the well-known formula for an associated norm in the RIS   $\widetilde{E}(0, T)$
		\begin{equation*}
	\|\Omega_{\varphi}(t, \cdot)\|_{\widetilde{E}'(0, T)}=\sup \left\lbrace \int_0^T \Omega_{\varphi}(t, \tau)\sigma(\tau)\dint \tau:\, \sigma \in \widetilde{E}_0(0, T);\, \|\sigma\|_{\widetilde{E}(0, T)}\leq 1\right\rbrace.
	\end{equation*}
	Thus, for every $\varepsilon \in (0,1)$  there exists $\sigma_{0, \varepsilon} \in \widetilde{E}_0(0, T)$   such that  $\|\sigma_{0, \varepsilon}\|_{\widetilde{E}(0, T)}\leq 1$, and
	\begin{equation*}
	\int_0^T \Omega_{\varphi}(t, \tau)\sigma_{0, \varepsilon}(\tau)\dint \tau \geq (1-\varepsilon)\|\Omega_{\varphi}(t, \cdot)\|_{\widetilde{E}'(0, T)}.
	\end{equation*}
	Let  $\sigma_{\varepsilon}\in \widetilde{E}_0(\real_+)$ be the extension of $\sigma_{0, \varepsilon}$   by zero from  $(0, T)$   onto  $\real_+$. Then, according to \eqref{g-ad2.29}, there exists $f_{\varepsilon} \in E(\rn)$ such that $f_{\varepsilon}^{\ast}\leq \sigma_{\varepsilon},\quad G\ast f_{\varepsilon}=u_{\varepsilon} \in H^{G}_{E}(\rn)$, 
	\beq
	\label{g-ad2.50}
	\omega_k(u_{\varepsilon}, t^{1/n})\geq c_{2}\int_{0}^{T}\Omega_{\varphi}(t,\tau)\sigma_{0,\varepsilon}(\tau)\dint \tau
	\geq (1-\varepsilon) c_{2}\|\Omega_{\varphi}(t, \cdot)\|_{\widetilde{E}'(0, T)}.
	\eeq
	 Moreover,
	 \beq
	 \label{g-ad2.51}
	\|u\|_{H^{G}_{E}(\rn)}\leq 	\|f_{\varepsilon}\|_{E(\rn)}=\|f_{\varepsilon}^{\ast}\|_{\widetilde{E}(\real_+)}\leq \|\sigma_{\varepsilon}\|_{\widetilde{E}(\real_+)}=\|\sigma_{0,\varepsilon}\|_{\widetilde{E}(0, T)}\leq 1.
	 \eeq
	 Consequently, by \eqref{g-ad2.47}, \eqref{g-ad2.50}, and \eqref{g-ad2.51}
	 \begin{equation*}
	 \Omega_{EG}^{k}(t^{1/n})\geq c_{2}\|\Omega_{\varphi}(t, \cdot)\|_{\widetilde{E}'(0, T)}.
	 \end{equation*}
	 This leads to \eqref{g-ad2.29} by passing to the limit for $\varepsilon \rightarrow 0.$
\end{proof}

\section{Embeddings into Calder\'on spaces}
\label{sect-2}

\subsection{Generalized Banach function spaces}
\label{sect-2-1}

We need some generalization of the notion of Banach function spaces (BFS), see Definition~\ref{bfs-ris}. Let $\mu$ denote the Lebesgue measure on $(0,T)$, $T\in (0,\infty]$.

\begin{definition}\label{gbfs-def}
A linear space $X=X(0,T)$ of measurable functions on $(0,T)$, equipped with the norm $\|\cdot\|_X$, is called a {\em generalized Banach function space}, shortly: {\em GBFS}, if the following conditions are satisfied:
\bit
\item[{\bfseries\upshape (P1)}]
$\|f\|_X=0 \iff f=0\ $ $\mu$-a.e. on $(0,T)$;
\item[{\bfseries\upshape (P2)}]
  $|f|\leq g,\  g\in X \quad\text{implies}\quad f\in X, \ \|f\|_X \leq \|g\|_X$;
\item[{\bfseries\upshape (P3)}]
If for some functions $f_n\in X$, $f_n\geq 0$, $n\in\nat$, and $f_n$ monotonically increasing to $f$, then $\|f_n\|_X \to \|f\|_X$ ~ for $n\to\infty$.
\item[{\bfseries\upshape (P4)}]
  For every measurable set $B\subset (0,T)$ with $\mu(B)>0$, there exists some $h_B>0$ $\mu$-a.e. in $B$, and some $c_B>0$, such that for all $f\in X$,
  \[
  \int_B h_B |f| \dint \mu \leq c_B \|f\|_X.
  \]
\item[{\bfseries\upshape (P5)}]
  For every measurable set $B\subset (0,T)$ with $\mu(B)>0$, there exists some $f_B\in X$ such that $f_B>0$ $\mu$-a.e. in $B$.
  \eit
\end{definition}

\begin{remark}
  Note that in \cite{BS} for a BFS it is required that $h_B=f_B=\chi_B$ in (P4), (P5), respectively, where $\chi_B$ is the characteristic function of $B$. Note that, if $X=X(0,T)$ is a BFS, and the function $\nu$ is $\mu$-measurable such that $0<\nu<\infty$ $\mu$-a.e. in $(0,T)$, then
  \[
  X_\nu = \left\{f: f \nu\in X, \ \|f\|_{X_\nu} = \|f\nu\|_X\right\}
  \]
is a GBFS. Moreover, a GBFS is in fact a Banach space, as well as its associated space. The duality principle holds for GBFS (the twice associated space coincides with the initial space, cf. \cite{BGZ}). 
  \end{remark}

Let $K=K(0,T)$ be some cone of non-negative $\mu$-measurable functions on $(0,T)$ equipped with the positively homogeneous functional $\varrho_K$. Recall that for the GBFS $X=X(0,T)$ the embedding $ K \mapsto X$ means that $K\subset X$ and
\beq
\label{g2.1}
\exists\ c=c_K>0: \quad \|h\|_X \leq c_K\varrho_K(h)\quad\text{for all}\ h\in K.
\eeq

\begin{definition}\label{def-g-2.2}
A GBFS $X_0=X_0(0,T)$ is called {\em optimal} for the embedding $K\mapsto X$, if
\bli
\item[{\bfseries\upshape (i)}]
  $K\mapsto X_0$,
\item[{\bfseries\upshape (ii)}]
  whenever $K\mapsto Y$, where $Y$ is a GBFS, then this implies $X_0\subset Y$.
  \eli
\end{definition}

\begin{remark}\label{rem-g-2.3}
  Let $K$ and $M$ be some cones of non-negative $\mu$-measurable functions on $(0,T)$ equipped with the functionals $\varrho_K$ and $\varrho_M$. If $K \approx M$, then
\bli
\item[{\upshape (1)}]
  for every GBFS $X=X(0,T)$ we have $K\mapsto X$ if, and only if, $M\mapsto X$, and the ratio of the constants $c_K/c_M$ in \eqref{g2.1} depends on the constants of the mutual coverings of the cones only, see \eqref{g1.13}, \eqref{g1.14},
\item[{\upshape (2)}]
  a GBFS $X_0=X_0(0,T)$ is optimal for both the embeddings $K\mapsto X$ as well as $M\mapsto X$.
  \eli
  This can be seen as follows. Let us show that whenever $M\prec K$ and $K\mapsto X$, then $M\mapsto X$.

  For every $h_1\in M$ we can find $h_2\in K$ such that
  \[h_1\leq h_2\quad\text{on}\quad (0,T)\quad\text{and}\quad \varrho_K(h_2)\leq c_0\varrho_M(h_1).\]
  Now $K\mapsto X$ implies $h_2\in X$ and, by property (P2) of Definition~\ref{gbfs-def}, $h_1\in X$ with $\|h_1\|_X\leq \|h_2\|_X\leq c_K\varrho_K(h_2)$. This finally leads to
  \[
  \|h_1\|_X\leq c_K c_0\varrho_M(h_1)\quad\text{for all}\quad h_1\in M.
  \]
  But this is nothing else than $M\mapsto X$. Consequently, the equivalence $M \approx K$ implies the equivalence
  \[
  K\mapsto X \iff M\mapsto X.
  \]
Thus the same GBFS $X_0=X_0(0,T)$ is optimal for both embeddings $K\mapsto X$ and $M\mapsto X$.  
\end{remark}

Let $X=X(0,T)$ be a GBFS and $k\in\nat$. We introduce the Calder\'on space $\Lambda^k(C,X)$ (see for example \cite{goldman-8}, a more special version was considered in \cite{GNO-7}) as follows:
\begin{align}
\Lambda^k(C;X) = & \left\{u\in C(\rn): \ \omega_k(u; t^{1/n}) \in X(0,T)\right\},
\label{g2.2}\\
& \|u\|_{\Lambda^k(C,X)} = \|u\|_C + \|\omega_k(u; t^{1/n})\|_{X(0,T)}.
\label{g2.3}
\end{align}
The following non-trivial conditions hold:
\begin{align}
  \label{g2.4}
  \Lambda^k(C;X) \neq \{0\} & \iff \left\|t^{k/n}\right\|_{X(0,T)} < \infty, \\
  \label{g2.5}
  \Lambda^k(C;X) \neq C(\rn) & \iff \left\| 1 \right\|_{X(0,T)} = \infty.
\end{align}
Moreover, it is obvious that
\beq
\label{g2.6}
X_0(0,T)\hookrightarrow X(0,T) \quad\text{implies}\quad \Lambda^k(C;X_0) \hookrightarrow \Lambda^k(C;X).
\eeq

Now we are able to formulate a criterion for the embedding $H^G_E(\rn) \hookrightarrow \Lambda^k(C;X)$.

\begin{theorem}
  \label{theo-g-2.4}
  Let the conditions of Theorem~\ref{theo-g-1.1} be satisfied, and let $K$ be the cone given by \eqref{g1.10} with \eqref{g1.12}. Then
\beq
\label{g2.7}
H^G_E(\rn) \hookrightarrow \Lambda^k(C;X),
\eeq
if, and only if,
\beq
\label{g2.8}
K\mapsto X.
\eeq
The norm of the embedding operator in \eqref{g2.7} depends only on $k$, $n$, $T$, $a_1$, $a_2$, $z_1$, $\delta_1$ and on the norms of the embedding operators in \eqref{g1.7} and \eqref{g2.1}.
\end{theorem}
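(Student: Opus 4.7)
The plan is to transfer the question about the potential space into a question about the cone $M$ of moduli of continuity, then invoke the equivalence of cones $M\approx K$ from Theorem~\ref{theo-g-1.1} and the transfer principle in Remark~\ref{rem-g-2.3}. A useful preliminary observation is that under the standing hypotheses Remark~\ref{rem-g-1.2} gives $H^G_E(\rn)\hookrightarrow C(\rn)$, so for every $u\in H^G_E(\rn)$ we already control $\|u\|_C \leq c\,\|u\|_{H^G_E}$ via \eqref{g1.7}; thus the $\|u\|_C$-summand in \eqref{g2.3} will not be the obstruction and everything reduces to controlling the second summand $\|\omega_k(u;t^{1/n})\|_{X(0,T)}$.

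First I would show the equivalence
\[
H^G_E(\rn)\hookrightarrow \Lambda^k(C;X) \iff M\mapsto X.
\]
For the direction ``$\Rightarrow$'', I would start with $h\in M$ and, using \eqref{g1.9}, pick $u\in H^G_E(\rn)$ with $\omega_k(u;t^{1/n})=h(t)$ and $\|u\|_{H^G_E}\leq (1+\varepsilon)\varrho_M(h)$; the assumed embedding and \eqref{g2.3} then yield $\|h\|_X\leq \|u\|_{\Lambda^k(C;X)} \leq c\,\|u\|_{H^G_E}$, and taking $\varepsilon\to 0$ gives $\|h\|_X\leq c\,\varrho_M(h)$, i.e.\ $M\mapsto X$ in the sense of \eqref{g2.1}. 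For the direction ``$\Leftarrow$'', I would take any $u\in H^G_E(\rn)$, set $h(t):=\omega_k(u;t^{1/n})$, observe $h\in M$ with $\varrho_M(h)\leq\|u\|_{H^G_E}$, so from $M\mapsto X$ one gets $\|h\|_{X}\leq c\,\|u\|_{H^G_E}$; combining this with the $C$-bound from \eqref{g1.7} produces $\|u\|_{\Lambda^k(C;X)}\leq c'\|u\|_{H^G_E}$.

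Second, I would close the loop by appealing to Theorem~\ref{theo-g-1.1}, Part~2, which, under the hypotheses in force, provides the equivalence $M\approx K$ in the sense of \eqref{g1.14}. Remark~\ref{rem-g-2.3}(1) then turns this into the equivalence $M\mapsto X \iff K\mapsto X$, completing the chain
\[
H^G_E(\rn)\hookrightarrow \Lambda^k(C;X)\iff M\mapsto X\iff K\mapsto X.
\]
Tracking the constants along the way, the embedding norm in \eqref{g2.7} depends only on the norm $c_K$ in \eqref{g2.1}, on the constant in \eqref{g1.7} for the $C$-part, and on the covering constants $c_1,c_2$ of Theorem~\ref{theo-g-1.1}, which in turn depend on $k,n,T,a_1,a_2,z_1,\delta_1$, giving the dependence claimed in the statement.

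The only genuinely delicate point, as far as I can see, is being careful in the ``$\Rightarrow$'' step above: the functional $\varrho_M$ is defined as an infimum over representatives, so one must select, for each $h\in M$, a near-optimal $u$ and let $\varepsilon\to 0$ to avoid losing a factor. All other steps are essentially bookkeeping built on Theorem~\ref{theo-g-1.1}, Remark~\ref{rem-g-1.2}, and Remark~\ref{rem-g-2.3}; no further analysis of $\Omega_\varphi$, $\Phi$, or the modulus of smoothness is needed at this stage.
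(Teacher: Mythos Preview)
Your proposal is correct and follows essentially the same route as the paper: first reduce \eqref{g2.7} to $M\mapsto X$ using \eqref{g1.7} to absorb the $\|u\|_C$-term, then pass to $K\mapsto X$ via the cone equivalence $M\approx K$ from Theorem~\ref{theo-g-1.1} and Remark~\ref{rem-g-2.3}. The only cosmetic difference is that the paper handles the ``$\Rightarrow$'' direction by taking the infimum over all representatives $u$ rather than choosing a near-optimal one and letting $\varepsilon\to 0$, but this is the same argument.
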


\begin{proof}
  First we show that under the condition \eqref{g1.5} we have the equivalence
  \beq
\eqref{g2.7} \iff M\mapsto X(0,T),
  \label{g6.1}
  \eeq
  where $M$ is the cone in \eqref{g1.8}. Indeed, in this case
  \[
  \|u\|_C\leq c_1 \|u\|_{H^G_E}\quad\text{for all}\quad u\in H^G_E(\rn),
  \]
  and \eqref{g2.7} is equivalent to
  \[
  \|\omega_k(u;t^{1/n})\|_{X(0,T)} \leq c_2\ \|u\|_{H^G_E}, \quad u\in H^G_E(\rn).
  \]
  This means that for $h\in M$,
  \[
\|h\|_{X(0,T)} \leq c_2\|u\|_{H^G_E},
  \]
  for every $u\in H^G_E(\rn)$ such that $\omega_k(u;t^{1/n}) = h(t)$, $t\in (0,T)$. Therefore, for $h\in M$,
  \[
  \|h\|_{X(0,T)}\leq c_2\inf\left\{\|u\|_{H^G_E}: u\in H^G_E(\rn), \ \omega_k(u;t^{1/n})=h(t)\right\},
  \]
  that is,
  \[
  \|h\|_{X(0,T)} \leq c_2 \varrho_M(h),\quad h\in M.
  \]
  This is equivalent to the embedding $M\mapsto X(0,T)$.
  
Now the equivalence of \eqref{g2.7} and \eqref{g2.8} follows from \eqref{g6.1}, \eqref{g1.14} due to Remark~\ref{rem-g-2.3}.
 \end{proof}

\begin{corollary}\label{cor-g-2.5}
  Let $X_0=X_0(0,T)$ be an optimal GBFS for the embedding \eqref{g2.8}, where $K$ is again the cone given by \eqref{g1.10}--\eqref{g1.12}. Then $\Lambda^k(C;X_0)$ is an optimal Calder\'on space for the embedding \eqref{g2.7}, that is,
  \beq
  \begin{split}
    H^G_E(\rn) \hookrightarrow \Lambda^k(C;X_0),\quad \text{and}\\
 \eqref{g2.7} \quad\text{implies}\quad \Lambda^k(C;X_0)\hookrightarrow \Lambda^k(C;X).
  \end{split}
\label{g2.9}  \eeq
\end{corollary}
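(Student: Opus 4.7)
The plan is to reduce everything to Theorem~\ref{theo-g-2.4}, which provides the bridge between embeddings of the potential space $H^G_E(\rn)$ into Calder\'on spaces and embeddings of the cone $K$ into GBFS. Once this bridge is in place, the claim becomes a direct consequence of the definition of an optimal GBFS together with property \eqref{g2.6}.

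\medskip

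First I would establish the embedding $H^G_E(\rn) \hookrightarrow \Lambda^k(C; X_0)$. By the optimality of $X_0$ for the embedding $K \mapsto X$ (in the sense of Definition~\ref{def-g-2.2}), we have in particular $K \mapsto X_0$. Applying Theorem~\ref{theo-g-2.4} with $X=X_0$ then converts this into $H^G_E(\rn)\hookrightarrow \Lambda^k(C; X_0)$, which is the first half of \eqref{g2.9}.

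\medskip

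Next I would verify the optimality. Assume that $X=X(0,T)$ is any GBFS with $H^G_E(\rn) \hookrightarrow \Lambda^k(C; X)$. Applying Theorem~\ref{theo-g-2.4} in the opposite direction, this embedding is equivalent to the cone embedding $K \mapsto X$. The defining property of $X_0$ as the optimal GBFS for $K\mapsto X$ now forces $X_0 \subset X$. By the standard closed-graph argument for (generalized) Banach function spaces, this set-theoretic inclusion upgrades to a continuous embedding $X_0(0,T)\hookrightarrow X(0,T)$, and property \eqref{g2.6} then yields $\Lambda^k(C; X_0)\hookrightarrow \Lambda^k(C;X)$, which is the second half of \eqref{g2.9}.

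\medskip

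The only subtle point, which I expect to flag rather than prove in detail, is the automatic promotion of the inclusion $X_0\subset X$ of GBFS to a continuous embedding. This is a standard fact from the theory of (generalized) Banach function spaces via the closed graph theorem, relying on properties (P2) and (P4) of Definition~\ref{gbfs-def}; alternatively, one may read it off directly by tracking the constants through the chain $K\mapsto X_0 \mapsto X$ produced by Theorem~\ref{theo-g-2.4}. Apart from this routine step, the argument is purely formal and amounts to combining Theorem~\ref{theo-g-2.4} with Definition~\ref{def-g-2.2} and the monotonicity \eqref{g2.6} of the construction $X\mapsto\Lambda^k(C;X)$.
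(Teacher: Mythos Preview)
Your proof is correct and follows essentially the same route as the paper's: use $K\mapsto X_0$ together with Theorem~\ref{theo-g-2.4} to get the first embedding, then for any $X$ with \eqref{g2.7} apply Theorem~\ref{theo-g-2.4} in the other direction to get $K\mapsto X$, invoke optimality of $X_0$ to obtain $X_0\hookrightarrow X$, and finish with \eqref{g2.6}. The only difference is that the paper writes $X_0\hookrightarrow X$ directly without pausing over the passage from set-theoretic inclusion to continuous embedding, whereas you explicitly flag this step; your closed-graph remark is a reasonable way to handle it.
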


\begin{proof}
  For the GBFS $X_0=X_0(0,T)$ we have
  \beq
  \label{g6.2}
K\mapsto X_0 \implies H^G_E(\rn) \hookrightarrow \Lambda^k(C; X_0),
  \eeq
  by Theorem~\ref{theo-g-2.4}. Assume that the embedding \eqref{g2.7} is true. Then $K\mapsto X$ implies $X_0\hookrightarrow X$ by \eqref{g2.8}, and by the definition of the optimal GBFS for the embedding $K\mapsto X$. Now we apply \eqref{g2.6} and obtain both the assertions in \eqref{g2.9}.
\end{proof}

\section{The description of the optimal Calder\'on space, I}
\label{sect-3}
We shall exemplify the results of Sections~\ref{sect-1} and \ref{sect-2} for the case when the basic RIS $E(\rn)$ coincides with a weighted Lorentz space, $E(\rn)=\Lambda_q(v)$, $1\leq q<\infty$, where the weight $v>0$ is a measurable function, recall Example~\ref{ex-Lorentz}, in particular \eqref{1.5}. Then $\Lambda_q(v)$, $1\leq q<\infty$, is equipped with the functional
\beq
\|f\|_{\Lambda_q(v)}  =\left(\int _0^\infty f^\ast\left(t\right)^{q}v\left(t\right) \dint t\right)^{1/q},\quad 1\le q<\infty.
\label{g3.1}
\eeq
General properties of Lorentz spaces can be found, for instance, in \cite{C-S,C-P-S-S}. Recall that
\beq
\label{g3.2}
\Lambda_q(v)\neq \{0\} \iff V(t)=\int_0^t v(\tau)\dint\tau <\infty,\quad t>0.
\eeq
Expression \eqref{g3.1} is equivalent to some norm if $q=1$ and $t^{-1}V(t)$ almost decreases, or, in case $q>1$, if there exists some $c>0$, such that
\beq
\label{g3.3}
t^q \int_t^\infty \tau^{-q}v(\tau)\dint \tau \leq c V(t),\quad t>0.
\eeq

We shall assume in the sequel that these conditions are satisfied. We further need the following notation, where $T>0$ and $t\in (0,T]$,
\begin{align}
W(t) = & V(t)^{-1}\int_0^t \varphi(\tau)\dint\tau,
\label{g3.4} \\
\Psi_q(t) = & \begin{cases}
\sup_{\tau\in (0,t]} W(\tau), & q=1, \\ \left(\int_0^t W^{q'}(\tau) v(\tau)\dint\tau\right)^{1/q'}, & 1<q<\infty, 
\end{cases}
\label{g3.5}
\end{align}
where $q'$ is defined as usual, $\frac1q+\frac{1}{q'}=1$, $1<q<\infty$. 
  
\begin{lemma}
  \label{lemma-g-3.1}
Let $T>0$, $1\leq q<\infty$.   Using the above notation,
  \beq
  \eqref{g1.5} \quad\text{is true\quad if, and only if,}\quad \Psi_q(T)<\infty.
  \label{g3.7}
  \eeq
\end{lemma}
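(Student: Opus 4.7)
The plan is to compute the associate-space norm $\|\varphi\|_{\widetilde{E}'(0,T)}$ explicitly and identify it, up to equivalence, with $\Psi_q(T)$. Since $E(\rn)=\Lambda_q(v)$, the Luxemburg representation yields $\|h\|_{\widetilde{E}(\real_+)}=\bigl(\int_0^\infty h^\ast(t)^q v(t)\dint t\bigr)^{1/q}$, and because $\varphi$ is positive and decreasing on $(0,T)$ we have $\varphi^\ast=\varphi\chi_{(0,T)}$. Expanding the definition of the associate norm and reducing the supremum to one over positive decreasing test functions $g$, I would first write
\[
\|\varphi\|_{\widetilde{E}'(0,T)}=\sup\left\{\int_0^T \varphi(t) g(t)\dint t : g\ge 0 \text{ decreasing},\ \int_0^T g(t)^q v(t)\dint t\leq 1\right\},
\]
so that the task reduces to evaluating this supremum.

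For $q=1$, I would exploit the layer-cake representation $g(t)=\mu([t,T])$ with $\mu$ a positive Borel measure on $(0,T]$. Fubini's theorem then gives
\[
\int_0^T \varphi g\,\dint t=\int_0^T\Big(\int_0^s\varphi(t)\dint t\Big)\dint\mu(s),\qquad \int_0^T gv\,\dint t=\int_0^T V(s)\dint\mu(s),
\]
and optimizing the ratio over positive $\mu$ produces $\sup_{s\in(0,T]}\bigl(V(s)^{-1}\int_0^s\varphi\bigr)=\sup_{s\in(0,T]}W(s)=\Psi_1(T)$. For $1<q<\infty$, I would invoke Sawyer's duality theorem for cones of decreasing functions (see for instance \cite{C-S,C-P-S-S}), which under the admissibility condition \eqref{g3.3} delivers
\[
\sup_{g\downarrow,\ \|g\|_{L_q(v)}\le 1}\int_0^T \varphi g\,\dint t\simeq \left(\int_0^T W(t)^{q'} v(t)\dint t\right)^{1/q'}=\Psi_q(T),
\]
with equivalence constants depending only on the constant in \eqref{g3.3}. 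Combining the two cases yields $\|\varphi\|_{\widetilde{E}'(0,T)}\simeq\Psi_q(T)$, whence \eqref{g1.5} is equivalent to $\Psi_q(T)<\infty$.

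The main obstacle is the case $1<q<\infty$. The elementary trial functions $g_s(t)=\chi_{(0,s)}(t)/V(s)^{1/q}$ are admissible and give the easy lower bound $\|\varphi\|_{\widetilde{E}'(0,T)}\gtrsim W(s)$ for every $s\in(0,T]$, but matching this against the integrated quantity $\Psi_q(T)=\bigl(\int_0^T W^{q'}v\bigr)^{1/q'}$ requires Sawyer's reduction of a Hardy-type inequality over the cone of decreasing functions to its unrestricted dual, and this is precisely the step where the normability condition \eqref{g3.3} is essential.
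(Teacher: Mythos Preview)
Your approach is essentially the paper's: both compute $\|\varphi\|_{\widetilde{E}'(0,T)}$ via the associate-space description of $\Lambda_q(v)$, which is precisely Sawyer's duality (the paper quotes the $\Gamma_{q'}(w)$ norm formula and plugs in $\varphi_0=\varphi\chi_{(0,T)}$; you set up the supremum over decreasing $g$ and cite Sawyer directly, with a clean layer-cake argument for $q=1$). One minor imprecision: Sawyer's formula on a finite interval carries an endpoint contribution $V(T)^{-1/q}\int_0^T\varphi$ (exactly the tail piece $\bigl(\int_0^T\varphi\bigr)\bigl(\int_T^\infty w\bigr)^{1/q'}$ that the paper isolates), which you omit when asserting $\|\varphi\|_{\widetilde{E}'(0,T)}\simeq\Psi_q(T)$; since that term is always finite by \eqref{g1.3}, \eqref{g1.4} and \eqref{g3.2}, the equivalence \eqref{g3.7} still follows, but condition \eqref{g3.3} alone does not absorb it into $\Psi_q(T)$.
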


\begin{proof}
  Let us consider the case of basic RIS
  \beq
  \label{g7.1}
  E(\rn)=\Lambda_q(v),\quad 1\leq q<\infty,
  \eeq
  see \eqref{g3.1}, \eqref{g3.2}. We define for $t>0$, $1<q<\infty$,
  \beq
  w(t)=V(t)^{-q'} v(t).
  \label{g7.2}
  \eeq
 Note that
  \beq
  \label{g7.3}
  \int_T^\infty w(t)\dint t = \frac{1}{q'-1}\left(V(T)^{1-q'}-\lim_{t\to\infty}V(t)^{1-q'}\right).
  \eeq

  We use the well-known description of the associated RIS for Lorentz spaces \eqref{g7.1} in the Luxemburg representation:
\[
\|\varphi_0\|_{\widetilde{E}'(\real_+)} = \begin{cases} \sup_{\tau\in\real_+} \frac{1}{V(\tau)} \int_0^\tau \varphi_0^*(s)\dint s,& q=1,\\
\left(\int_0^\infty \left(\int_0^\tau \varphi_0^*(s)\dint s\right)^{q'} w(\tau)\dint\tau\right)^{1/q'},& q>1,\end{cases}
\]
where $\varphi_0^\ast$ is the decreasing rearrangement of the function $\varphi_0:\real_+\to[0,\infty]$. For $\varphi$ given by \eqref{g1.4} we set
\[
\varphi_0(\tau)=\begin{cases} \varphi(\tau), &\tau\in (0,T),\\
0,& \tau\geq T.\end{cases}
\]
Since $\varphi\geq 0$ is monotonically decreasing and right-continuous, we conclude
\[\varphi_0^\ast(s) = \varphi(s)\chi_{(0,T)}(s),
\]
and thus
\[ \int_0^\tau \varphi_0^\ast (s)\dint s = \left(\int_0^\tau\varphi(s)\dint s\right)\chi_{(0,T)}(\tau) + \left(\int_0^T\varphi(s)\dint s\right)\chi_{[T,\infty)}(\tau). 
  \]
  In view of $\ \|\varphi\|_{\wt{E}'(0,T)} = \|\varphi_0\|_{\wt{E}'(\real_+)}\ $ this leads to
  \[
  \|\varphi\|_{\wt{E}'(0,T)} = \max\left\{\sup_{\tau\in (0,T)} W(\tau), \left(\int_0^T \varphi(s)\dint s\right) \sup_{\tau\geq T} V(\tau)^{-1}\right\} = \Psi_1(T)
  \]
  in case of $q=1$, see \eqref{g3.4} and \eqref{g3.5}. In case of $q>1$, using that $q'=\frac{q}{q-1}$, we conclude from \eqref{g3.4}, \eqref{g3.5}, \eqref{g7.2} and \eqref{g7.3} that
  \begin{align*}
    \|\varphi\|_{\wt{E}'(0,T)} \simeq & \left( \int_0^T W(\tau)^{q'} v(\tau)\dint \tau\right)^{1/q'} + \left(\int_0^T \varphi(s)\dint s\right) \left(\int_T^\infty w(\tau)\dint \tau\right)^{1/q'} \\
    = & \ \Psi_q(T) + \left(\int_0^T \varphi(s)\dint s\right) \frac{1}{(q'-1)^{1/q'}}\left(V(T)^{1-q'}-\lim_{t\to\infty}V(t)^{1-q'}\right)^{1/q'}.
  \end{align*}
Thus the equivalence \eqref{g3.7} is shown, as the second term is finite in view of \eqref{g1.3}, \eqref{g1.4}.  
\end{proof}

In view of the description of the optimal Calder\'on spaces we introduce two alternative collections of the conditions for $\varphi$ and $v$. For that reason we complement our above notation \eqref{g3.4}, \eqref{g3.5} as follows:
\begin{align}
\widetilde{W}(t) = & V(t)^{-1} t^{1-\frac{k}{n}} \varphi(t),
\label{g3.12}\\
U_q(t) = & \begin{cases}
\sup_{\tau\in [t,T]} \widetilde{W}(\tau), & q=1, \\ \left(\int_t^T \widetilde{W}^{q'}(\tau) v(\tau)\dint\tau\right)^{1/q'}, & 1<q<\infty, 
\end{cases}
\label{g3.13}
  \end{align}
  where again $t\in (0,T]$ is assumed. Note that $\widetilde{W}$ is a continuous bounded function on $[t,T]$ for any $t\in (0,T]$, such that the expressions in \eqref{g3.13} are well-defined.

    Now we can formulate the alternative assumptions.
\bli
\item[{\bfseries\upshape (A)}]
  There exists a constant $d_1>0$ such that for every $t\in (0,T ]$,
    \beq
    \label{g3.8}
\int_t^T \tau^{-\frac{k}{n}} \varphi(\tau)\dint \tau \leq d_1 t^{-\frac{k}{n}} \int_0^t \varphi(\tau)\dint\tau,
    \eeq
    and, in addition,
\beq
\label{g3.9}
\exists\ \varepsilon>0: \ t^\varepsilon V(t)^{-1}\quad \text{is monotonically decreasing for}\quad t\in (0,T].
\eeq
\item[{\bfseries\upshape (B)}]
  There exists a constant $d_2>0$ such that for every $t\in (0,T]$
    \beq
    \label{g3.10}
\int_0^t \tau^{-\frac{k}{n}} \varphi(\tau)\dint \tau \leq d_2 t^{1-\frac{k}{n}} \varphi(t),
    \eeq
    and, in addition,
\beq
\label{g3.11}
\exists\ \varepsilon>0: \ t^\varepsilon U_q(t)\quad \text{is monotonically decreasing for}\quad t\in (0,T].
\eeq
 \eli

 \begin{remark}\label{rem-g-3.2}
   Let $\lambda>0$ be slowly varying on $(0,T]$,
     \beq
     \label{g3.15}
     0<\alpha<n,\quad \varphi(t)=t^{\frac{\alpha}{n}-1} \lambda(t),\quad t\in (0,T],
     \eeq
     similar to \eqref{g1.24}. Recall that $\lambda\equiv 1$ corresponds to the classical Bessel potentials. Then \eqref{g3.8} holds if, and only if, $\alpha<k$, whereas \eqref{g3.10} holds, if, and only if, $\alpha>k$. \\

This can be seen as follows. Let $\varphi$ be given by \eqref{g3.15} and denote
     \begin{align}
       \label{g7.4}
       A(t)= & \int_t^T \tau^{-k/n}\varphi(\tau)\dint\tau = \int_t^T \tau^{\frac{\alpha-k}{n}-1} \lambda(\tau)\dint\tau,\\
B(t)= & \ t^{-k/n} \int_0^t\varphi(\tau)\dint\tau = t^{-k/n} \int_0^t \tau^{\alpha/n-1} \lambda(\tau)\dint\tau.
       \label{g7.5}
     \end{align}
     According to \eqref{g5.9} for $\alpha>0$,
     \beq
     \label{g7.6}
B(t)\simeq t^{\frac{\alpha-k}{n}} \lambda(t),\quad t\in (0,T),
     \eeq
     and by \eqref{g5.10} for $0<\alpha<k$,
     \[
     A(t)\leq c\ t^{\frac{\alpha-k}{n}}\lambda(t) \simeq B(t),\quad t\in (0,T),
     \]
     such that \eqref{g3.8} follows. If $\alpha=k$, then \eqref{g7.4}, \eqref{g7.6} and \eqref{g5.11} show that
     \[
     B(t) \simeq \lambda(t) \simeq {\mathbf o}\left(A(t)\right),\quad t\to 0+,
     \]
     such that \eqref{g3.8} fails. If $\alpha>k$, then \eqref{g7.6} and \eqref{g5.9} show that
     \[
     B(0+)=0,\quad A(0+) = \int_0^T \tau^{\frac{\alpha-k}{n}-1}\lambda(\tau)\dint\tau \simeq T^{\frac{\alpha-k}{n}}\lambda(T)>0,
     \]
     so that \eqref{g3.8} fails. Therefore
     \eqref{g3.8} holds if, and only if, $\alpha<k$.

     Next we show that \eqref{g3.10} holds if, and only if, $\alpha>k$. For a function $\varphi$ given by \eqref{g3.15} we denote by
     \[
     C(t)=\int_0^t \tau^{-\frac{k}{n}}\varphi(\tau)\dint\tau = \int_0^t \tau^{\frac{\alpha-k}{n}-1} \lambda(\tau)\dint\tau.
     \]
     If $\alpha<k$, then $C(t)=\infty$, $t\in (0,T)$, for every slowly varying function $\lambda>0$. Thus \eqref{g3.10} fails. If $\alpha=k$, then
     \[
     t^{1-\frac{k}{n}} \varphi(t)= \lambda(t)={\mathbf o}\left(\int_0^t \lambda(\tau) \frac{\dint\tau}{\tau}\right) \quad \text{for}\quad t\to 0+
     \]
     for every slowly varying function $\lambda>0$. Hence \eqref{g3.10} fails as well. Finally, in the remaining case $\alpha>k$, we have by \eqref{g5.9} that
     \[
     C(t)\simeq t^{\frac{\alpha-k}{n}} \lambda(t) = t^{1-\frac{k}{n}} \varphi(t), \quad t\in (0,T),
     \]
and \eqref{g3.10} holds.
 \end{remark}

 Now we present one of our main results. Its proof however has to be postponed to Section~\ref{sect-3b} as we shall need some detailed preparation. But here we want to formulate the corresponding result first and collect some further consequences and examples below.

 We introduce the following notation,
\beq
\label{g3.16}
\left\|f\right\|_{\mathring{X}_0} = \left(\int_0^T \left(\frac{\|f\|_{L_\infty(0,t)}}{\Psi_q(t)}\right)^q \frac{\dint \Psi_q(t)}{\Psi_q(t)}\right)^{1/q},
\eeq
with $\Psi_q$ defined by \eqref{g3.5}. Let $T_1\in (0,T)$ be such that $\Psi_q(T_1)=\frac12 \Psi_q(T)$.

\begin{theorem}
  \label{theo-g-3.3}
  Let $1\leq q<\infty$, $T>0$, and assume that $\Psi_q(T)<\infty$, where $\Psi_q$ is given by \eqref{g3.5}. Let the cone $K$ be given by \eqref{g1.10} with \eqref{g1.12}. 
  Assume that at least one of the above conditions {\upshape\bfseries (A)} or {\upshape\bfseries (B)}, given by \eqref{g3.8}--\eqref{g3.11}, is satisfied. Then the optimal GBFS $X_0=X_0(0,T)$ for the embedding $K\mapsto X$ has the following norm:
  \bli
\item[{\upshape\bfseries (i)}]
  if $q=1$ and $\Psi_1(0+)=\lim_{t\downarrow 0} \Psi_1(t) >0$, then
  \beq
  \label{g3.17}
  \|f\|_{X_0} = \|f\|_{L_\infty(0,T)},
  \eeq
\item[{\upshape\bfseries (ii)}]
  if $q=1$ and $\Psi_1(0+)=\lim_{t\downarrow 0} \Psi_1(t) =0$, or $1<q<\infty$, then 
  \beq
  \label{g3.18}
  \|f\|_{X_0} = \|f\|_{\mathring{X}_0} + \Psi_q(T)^{-1} \| f\|_{L_\infty(T_1,T)}.
  \eeq
\eli
\end{theorem}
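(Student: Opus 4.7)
The plan is to exploit the special structure of the basic space $E=\Lambda_q(v)$ so as to first compute $\widetilde{E}'$ explicitly and then reduce the cone $K$ to an equivalent (but essentially explicit) cone whose optimal majorant norm can be identified directly. By Theorem~\ref{theo-g-2.4} and Corollary~\ref{cor-g-2.5} the problem is purely a question about the cone embedding $K\mapsto X$, so the target is the smallest GBFS $X_0$ that still contains every representative of $K$ with the correct control of $\varrho_K$.

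The first step is to decompose $\Omega_\varphi(t,\tau)$ at $\tau=t$: using that the weight $(1+(\tau/t)^{k/n})^{-1}$ is between $1/2$ and $1$ for $\tau\le t$ and comparable to $(t/\tau)^{k/n}$ for $\tau>t$, and that $\sigma\in\widetilde{E}_0(0,T)$ is decreasing, I would derive a two-sided estimate of the form
\[
h(t)\;\simeq\;\sigma(t)\int_0^t\varphi(\tau)\,\dint\tau \;+\; t^{k/n}\int_t^T\tau^{-k/n}\varphi(\tau)\sigma(\tau)\,\dint\tau,\qquad h\in K.
\]
Condition \textbf{(A)} makes the first term dominate (since then $\int_t^T\tau^{-k/n}\varphi\le d_1t^{-k/n}V(t)W(t)$ and $\sigma$ is decreasing), so $h(t)\simeq \sigma(t)V(t)W(t)$; condition \textbf{(B)} makes the first term comparable to $t\varphi(t)\sigma(t)$ and, together with the monotonicity hypothesis \eqref{g3.11}, reduces $h$ to an equivalent Hardy-type expression in $\sigma$ and $\widetilde{W}$. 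In either case $K$ is equivalent (in the sense of \eqref{g1.14}) to an explicit cone $K_0$ consisting of non-decreasing functions parametrized by $\sigma\in\widetilde{E}_0(0,T)$, for which $\varrho_{K_0}(h)\simeq\|\sigma\|_{\widetilde{E}(0,T)}$. By Remark~\ref{rem-g-2.3} it suffices to determine the optimal GBFS for $K_0\mapsto X$.

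For the weighted Lorentz norm, Lemma~\ref{lemma-g-3.1} already identifies $\|\varphi\|_{\widetilde{E}'(0,T)}$ with $\Psi_q(T)$, and a parallel computation of $\|\Omega_\varphi(t,\cdot)\|_{\widetilde{E}'(0,T)}$ produces an expression equivalent to $\Psi_q(t)$ (up to the boundary contribution near $T$). Using the fact that elements of $K$ are non-decreasing, so that $\|h\|_{L_\infty(0,t)}=h(t)$, the embedding $K_0\mapsto X$ translates into a Hardy-type inequality with weight $\Psi_q$. The minimal GBFS norm controlling such Hardy expressions is, by the standard duality for cones of monotone functions (of the type developed in \cite{BaGo}), exactly
\[
\|f\|_{X_0}\;=\;\Bigl(\int_0^T\bigl(\|f\|_{L_\infty(0,t)}/\Psi_q(t)\bigr)^q\,\dint\Psi_q(t)/\Psi_q(t)\Bigr)^{1/q}\;+\;\Psi_q(T)^{-1}\|f\|_{L_\infty(T_1,T)},
\]
the second term absorbing the contribution at the right endpoint $T$. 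When $q=1$ and $\Psi_1(0+)>0$, the measure $\dint\Psi_1/\Psi_1$ is finite and the Hardy integral collapses to the plain $L_\infty(0,T)$-norm, giving case (i); otherwise case (ii) applies.

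The minimality (i.e.\ the implication $K\mapsto Y\Rightarrow X_0\hookrightarrow Y$) is obtained in the usual way: given any GBFS $Y$ with $K\mapsto Y$, test the inequality on the family of extremal $h$'s of $K_0$ obtained by choosing $\sigma=\chi_{(0,s)}$ (or suitable step functions) and deduce pointwise estimates on $\|f\|_{L_\infty(0,t)}/\Psi_q(t)$ that force $\|f\|_Y\gtrsim\|f\|_{X_0}$ via property (P2). I expect the main obstacle to be the extraction step of \textbf{Step 2}: the reduction from the full cone $K$ to an equivalent explicit cone requires careful use of \textbf{(A)} or \textbf{(B)} together with the monotonicity hypotheses \eqref{g3.9}, \eqref{g3.11} to ensure that the dominating term really is dominating uniformly in $t\in(0,T]$, and that passing to the equivalent cone does not lose the right boundary behaviour near $t=T$ that is encoded by $T_1$.
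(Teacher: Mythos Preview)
Your primal decomposition of $h\in K$ has a genuine gap. From $\Omega_\varphi(t,\tau)\simeq\varphi(\tau)$ for $\tau\le t$ one only gets
\[
\int_0^t\Omega_\varphi(t,\tau)\sigma(\tau)\,\dint\tau\ \simeq\ \int_0^t\varphi(\tau)\sigma(\tau)\,\dint\tau,
\]
and the monotonicity of $\sigma$ yields the \emph{lower} bound $\int_0^t\varphi\sigma\ge\sigma(t)\int_0^t\varphi$, but no matching upper bound: take $\sigma=\chi_{(0,\varepsilon)}$ with $\varepsilon\ll t$, so that $\int_0^t\varphi\sigma=\int_0^\varepsilon\varphi>0$ while $\sigma(t)\int_0^t\varphi=0$. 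Hence your claimed two-sided estimate $h(t)\simeq\sigma(t)\int_0^t\varphi+\cdots$ and the conclusion $h(t)\simeq\sigma(t)V(t)W(t)$ under~\textbf{(A)} are false, and the cone $K$ is \emph{not} equivalent to the multiplicative cone $\{\sigma\cdot VW:\sigma\in\widetilde E_0\}$. The same obstruction blocks the reduction you sketch under~\textbf{(B)}. Since your identification of $X_0$ rests on this cone equivalence, the argument does not go through as written.

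The paper avoids this by working on the \emph{dual} side. Using the general machinery of \cite{BGZ} (Section~\ref{sect-g-7}), the norm associated to the optimal one is shown to be
\[
\varrho_0(g)=\Bigl\|\int_0^T\Omega_\varphi(\xi,\cdot)\,g(\xi)\,\dint\xi\Bigr\|_{\widetilde E'(0,T)},\qquad g\in M^+(0,T),
\]
cf.~\eqref{g7.11}. The bulk of the work (Lemmas~\ref{lemma-g-8.1}--\ref{lemma-g-8.9} for $q=1$, Lemmas~\ref{lemma-g-9.1}--\ref{lemma-g-9.5} for $q>1$, summarized in Theorem~\ref{theo-g-9.6}) then consists in proving that conditions~\textbf{(A)} or~\textbf{(B)} force
\[
\varrho_0(g)\ \simeq\ \widetilde\varrho_0(g),
\]
where $\widetilde\varrho_0$ is the single Hardy-type functional \eqref{g9.44}/\eqref{g9.45} built from $\int_0^t\varphi$ and $\int_t^Tg$. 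This step uses the explicit form of $\widetilde E'$ for Lorentz spaces, a weighted Hardy inequality together with the monotonicity hypotheses \eqref{g3.9}/\eqref{g3.11} (see in particular Corollary~\ref{cor-g-9.3} and the discretization in Lemmas~\ref{lemma-g-8.9}, \ref{lemma-g-9.5}); it is here, on the associated norm, that the collapsing you attempted primally actually succeeds. Only after $\varrho_0\simeq\widetilde\varrho_0$ is established does \cite[Thm.~1.2]{BaGo} deliver the bidual norm in the form \eqref{g3.16}--\eqref{g3.18}. Your final formula and the appeal to \cite{BaGo} are therefore on target, but the reduction has to be carried out on $\varrho_0(g)$, not on $h(t)$.
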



\section{Construction of the associated norm to the optimal norm}\label{sect-3a}

\subsection{Some preparation: General description}\label{sect-g-7}

Let the assumptions of Theorem~\ref{theo-g-1.1} be satisfied, and $K$ the cone given by \eqref{g1.10} with \eqref{g1.12}. We will show that the results of \cite[Thm.~3.1, Rem.~3.2, Ex.~3.3]{BGZ} are applicable here. Note that here $A=D=(0,T)$, $\mu=\nu$ are Lebesgue measures, $\Omega_\varphi(t,\tau)$ is determined by \eqref{g1.12}, so that for $t,\tau\in (0,T)$,
\beq
\label{g7.7}
\Omega_\varphi(t,\tau) \simeq\begin{cases}\varphi(\tau), & \tau\in (0,t], \\ t^{k/n} \tau^{-k/n}\varphi(\tau), & \tau>t.
\end{cases}
\eeq
To apply \cite[Thm.~3.1]{BGZ} it is sufficient to verify that
\beq
\label{g7.8}
c_0=\left\|\int_0^T \Omega_\varphi(t,\cdot)\dint t\right\|_{\widetilde{E}'(0,T)} < \infty,
\eeq
and the existence of some $\sigma_0\in \widetilde{E}_0(0,T)$ such that
\beq
\label{g7.9}
\int_0^T \Omega_\varphi(t,\tau)\sigma_0(\tau)\dint\tau>0, \quad t\in (0,T),
\eeq
as these conditions imply \cite[(3.7), (3.8)]{BGZ}. According to \eqref{g7.7}, for every $g\in M^+(0,T)$,
\beq
\label{g7.10}
\int_0^T \Omega_\varphi(\xi,\tau) g(\xi)\dint\xi \simeq \tau^{-k/n} \varphi(\tau) \int_0^\tau \xi^{k/n}g(\xi)\dint\xi + \varphi(\tau)\int_{\tau}^T g(\xi)\dint\xi.
\eeq
Let $g(t)\equiv 1$, $t\in (0,T)$, then for $\tau\in (0,T)$,
\[
\int_0^T \Omega_\varphi(t,\tau) \dint t \simeq \tau\varphi(\tau) + (T-\tau)\varphi(\tau)=T\varphi(\tau),
\]
so that
\[
c_0=\left\|\int_0^T\Omega_\varphi(t,\cdot)\dint t\right\|_{\widetilde{E}'(0,T)} \simeq \left\|\varphi\right\|_{\widetilde{E}'(0,T)}  < \infty,
\]
because of assumption \eqref{g1.5}. Furthermore, with
\[
\sigma_0 = \chi_{(0,T)}\in\widetilde{E}_0(0,T),
\]
we obtain in view of \eqref{g7.7} for every $t\in (0,T)$,
\[
\int_0^T \Omega_\varphi(t,\tau)\sigma_0(\tau)\dint\tau = \int_0^T \Omega_\varphi(t,\tau)\dint\tau \geq c\int_0^t \varphi(\tau)\dint\tau>0,
\]
which implies \eqref{g7.9}. Finally, an application of \cite[Thm.~3.1, Rem.~3.2, Ex.~3.3]{BGZ} shows that the associated norm to the optimal one for the embedding $K\mapsto X$ coincides with
\beq
\label{g7.11}
\varrho_0(g)=\left\|\int_0^T\Omega_\varphi(t,\cdot)g(t)\dint t\right\|_{\widetilde{E}'(0,T)} ,\quad g\in M^+(0,T).
\eeq

\subsection{The case $E=\Lambda_1(v)$}\label{sect-g-8}

\begin{lemma}
  Let the assumptions \eqref{g1.3}, \eqref{g1.4} and \eqref{g3.1}-\eqref{g3.3} be satisfied with $q=1$. Then the following estimate holds for the norm \eqref{g7.11},
    \beq
    \varrho_0(g)\simeq \wt{\varrho}_0(g)+\varrho_1(g), \quad g\in M^+(0,T),
    \label{g8.1}
\eeq
where
\begin{align}
  \wt{\varrho}_0(g) & = \sup_{t\in (0,T)} \left\{ V(t)^{-1} \left(\int_0^t\varphi(\tau)\dint\tau\right)\left(\int_t^T g(\xi)\dint\xi\right)\right\},    \label{g8.2}\\
    \varrho_1(g)& = \sup_{t\in (0,T)}  \left\{V(t)^{-1} \left(\int_0^t
  \Phi_k(\xi,t)g(\xi)\dint\xi\right)\right\},
    \label{g8.3}
\end{align}
where
\beq
\label{g8.4}
\Phi_k(\xi,t)=\int_0^\xi \varphi(\tau)\dint\tau + \xi^{k/n} \int_\xi^t \tau^{-k/n} \varphi(\tau)\dint\tau.
\eeq
\label{lemma-g-8.1}
  \end {lemma}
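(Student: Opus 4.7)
The goal is to compute the norm $\varrho_0(g)$ from \eqref{g7.11} explicitly in the case $E=\Lambda_1(v)$, using the associated norm description from the proof of Lemma~\ref{lemma-g-3.1}, and to reorganize the resulting expression via Fubini's theorem so that it matches $\widetilde{\varrho}_0(g)+\varrho_1(g)$.

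\textbf{Step 1: Reduction to a sup-integral.} Recall that for $E=\Lambda_1(v)$ the associated norm in Luxemburg representation is
\[
\|\psi\|_{\widetilde{E}'(0,T)}=\sup_{t\in(0,T)}\frac{1}{V(t)}\int_0^t\psi^{\ast}(s)\dint s,
\]
see the proof of Lemma~\ref{lemma-g-3.1}. Apply this to $\psi(\tau)=F_g(\tau):=\int_0^T\Omega_\varphi(t,\tau)g(t)\dint t$. Since $\Omega_\varphi(t,\tau)=\varphi(\tau)/(1+(\tau/t)^{k/n})$ is non-negative and monotonically decreasing in $\tau$ for every fixed $t$, the integral $F_g$ is decreasing in $\tau$ as well, hence $F_g^{\ast}=F_g$. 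Therefore
\[
\varrho_0(g)=\sup_{t\in(0,T)}\frac{1}{V(t)}\int_0^t F_g(\tau)\dint\tau.
\]

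\textbf{Step 2: Splitting $F_g$ via \eqref{g7.10}.} The two-sided estimate \eqref{g7.10} gives
\[
F_g(\tau)\simeq\tau^{-k/n}\varphi(\tau)\int_0^\tau\xi^{k/n}g(\xi)\dint\xi+\varphi(\tau)\int_\tau^T g(\xi)\dint\xi.
\]
Insert this into the inner integral $\int_0^t F_g(\tau)\dint\tau$ and write the result as $I_1+I_2$ corresponding to the two summands. In $I_2$ split the inner range at $\xi=t$, so that $\int_\tau^T=\int_\tau^t+\int_t^T$; the piece coming from $\int_t^T g$ factorizes and produces precisely $\bigl(\int_0^t\varphi(\tau)\dint\tau\bigr)\bigl(\int_t^T g(\xi)\dint\xi\bigr)$, which, after dividing by $V(t)$ and taking the sup over $t$, is exactly $\widetilde{\varrho}_0(g)$.

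\textbf{Step 3: Fubini on the remaining pieces.} For the remaining part of $I_2$, swap the order of integration:
\[
\int_0^t\varphi(\tau)\int_\tau^t g(\xi)\dint\xi\,\dint\tau=\int_0^t g(\xi)\Bigl(\int_0^\xi\varphi(\tau)\dint\tau\Bigr)\dint\xi.
\]
Similarly, Fubini applied to $I_1$ yields
\[
I_1=\int_0^t\xi^{k/n}g(\xi)\int_\xi^t\tau^{-k/n}\varphi(\tau)\dint\tau\,\dint\xi.
\]
Adding these two contributions and recognizing the definition \eqref{g8.4} of $\Phi_k(\xi,t)$ gives exactly $\int_0^t g(\xi)\Phi_k(\xi,t)\dint\xi$. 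Dividing by $V(t)$ and taking the supremum over $t\in(0,T)$ reproduces $\varrho_1(g)$.

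\textbf{Step 4: Summing the two sups.} Combining Steps 2 and 3 yields
\[
\varrho_0(g)\simeq\sup_{t\in(0,T)}\frac{1}{V(t)}\Bigl[\int_0^t g(\xi)\Phi_k(\xi,t)\dint\xi+\Bigl(\int_0^t\varphi(\tau)\dint\tau\Bigr)\Bigl(\int_t^T g(\xi)\dint\xi\Bigr)\Bigr].
\]
Since both summands are non-negative, the sup of the sum is equivalent to the sum of the sups (with constants in $[1,2]$), which finishes the proof of \eqref{g8.1}. The only genuinely non-routine point is the observation in Step~1 that $F_g$ is decreasing in $\tau$, so that $F_g^{\ast}=F_g$ and the associated-norm formula collapses to a clean sup-integral; after that, everything reduces to careful bookkeeping with Fubini's theorem.
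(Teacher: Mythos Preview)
Your proof is correct and follows essentially the same route as the paper: identify $\widetilde{E}'$ for $\Lambda_1(v)$ as the Marcinkiewicz space $M_V$, use the monotonicity of $F_g(\tau)=\int_0^T\Omega_\varphi(\xi,\tau)g(\xi)\dint\xi$ in $\tau$ to replace $F_g^\ast$ by $F_g$, plug in the two-sided estimate \eqref{g7.10}, and then reorganize via Fubini to obtain $\Phi_k$. The only point the paper handles more explicitly is the passage from the associated norm on $\real_+$ (applied to the zero-extension of $F_g$) to the supremum over $(0,T]$: it splits $\sup_{t>0}$ into $\sup_{t\in(0,T)}$ and $\sup_{t\geq T}$ and observes that, since $V$ is increasing, the tail contribution is $V(T)^{-1}\int_0^T F_g(\tau)\dint\tau$, which is absorbed into the boundary of the first supremum; you subsume this in your reference to Lemma~\ref{lemma-g-3.1}.
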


\begin{proof}
  For $g\in M^+(0,T)$ we define
  \beq
  \Psi_0(g,\tau) = \begin{cases}
\int_0^T \Omega_\varphi(\xi,\tau) g(\xi) \dint\xi, & \tau\in (0,T), \\ 0, &  \tau\geq T.
  \end{cases}
    \label{g8.5}
  \eeq
  Then, according to \eqref{g7.11},
  \beq
  \varrho_0(g)=\left\|\Psi_0(g)\right\|_{\wt{E}'(\real_+)}.
  \label{g8.6}
  \eeq
  In our setting, $\wt{E}(\real_+)=\Lambda_1(v)$ implies
  \beq
  \wt{E}'(\real_+) = M_V(\real_+),
  \label{g8.7}
  \eeq
  where $M_V$ is the Marcinkiewicz space normed by
  \[
{ \|f\|_{M_V} = \sup_{t>0} V(t)^{-1} \int_0^t f^\ast(\tau)\dint\tau},
  \]
recall \eqref{1.6} in Example~\ref{ex-Lorentz}.
  and $f^\ast$ denotes the decreasing rearrangement of $f$, as usual. Since $0\leq f(\tau)=\Psi_0(g,\tau)$ is decreasing and right-continuous, hence $f^\ast(\tau)=\Psi_0(g,\tau)$. Thus \eqref{g8.6} implies
  \beq
  \varrho_0(g)=\sup_{t>0} V(t)^{-1} \int_0^t \Psi_0(g,\tau)\dint\tau,
  \label{g8.8}
  \eeq
  which in view of \eqref{g8.5} leads to
  \begin{align*}
    \varrho_0(g) &= \max\left\{\sup_{t\in (0,T)} \left(V(t)^{-1} \int_0^t \Psi_0(g,\tau)\dint\tau\right); \left(\int_0^T \Psi_0(g,\tau)\dint\tau\right) \sup_{t\geq T} V(t)^{-1}\right\}\\
    &= \sup_{t\in (0,T]} \left(V(t)^{-1} \int_0^t \Psi_0(g,\tau)\dint\tau\right) \\
    & \simeq \sup_{t\in (0,T]} \left(V(t)^{-1} \int_0^t \int_0^T \Omega_\varphi(\xi,\tau) g(\xi) \dint\xi\dint\tau\right).
    \end{align*}
  We substitute \eqref{g7.10} into the last formula and obtain
  \[
  \varrho_0(g) \simeq \wt{\varrho}_0(g) + \varrho_1(g),
  \]
  where $\wt{\varrho}_0(g)$ is determined by \eqref{g8.2} and
  \beq
  \varrho_1(g)= \sup_{t\in (0,T]} \left(V(t)^{-1} \int_0^t \left(\int_\tau^t g(\xi)\dint\xi + \tau^{-k/n} \int_0^\tau \xi^{k/n} g(\xi)\dint\xi\right) \varphi(\tau)\dint\tau\right).
  \label{g8.9}
  \eeq
Changing the order of integration, equality \eqref{g8.9} gives \eqref{g8.3}.
\end{proof}

\begin{remark}
\label{rem-g-8.2}
Let the conditions of Lemma~\ref{lemma-g-8.1} be satisfied and
\[
\sup_{t\in (0,T)} V(t)^{-1} \int_0^t \varphi(\tau)\dint\tau = \infty.
\]
Then $g\in M^+(0,T)$ with $\varrho_0(g)<\infty$ implies $g=0$ a.e. on $(0,T)$. This is a consequence of \eqref{g8.2}.
\end{remark}

\begin{corollary}
  \label{cor-g-8.3}
  Let the conditions of Lemma~\ref{lemma-g-8.1} be satisfied and the following estimate be valid for $\xi\in (0,T)$,
  \beq
  \xi^{k/n}\int_\xi^T \tau^{-k/n}\varphi(\tau)\dint\tau \leq d_1\int_0^\xi \varphi(\tau)\dint\tau,
  \label{g8.10}
  \eeq
  for some $d_1\in\real_+$ not depending on $\xi$. Then
  \beq
  \varrho_0(g)\simeq \widehat{\varrho}_0(g)=\sup_{t\in(0,T]} \left\{V(t)^{-1} \left(\int_0^t \left(\int_\tau^T g(\xi)\dint\xi\right)\varphi(\tau)\dint\tau\right)\right\}.
  \label{g8.11}
  \eeq
\end{corollary}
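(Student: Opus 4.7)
The plan is to feed the hypothesis \eqref{g8.10} into the representation from Lemma~\ref{lemma-g-8.1} and then recombine the two pieces $\widetilde{\varrho}_0(g)$ and $\varrho_1(g)$ via a single Fubini manipulation. Concretely, the starting point is the equivalence
\[
\varrho_0(g) \simeq \widetilde{\varrho}_0(g) + \varrho_1(g),
\]
together with the explicit kernel $\Phi_k(\xi,t)$ in \eqref{g8.4}.

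First I would observe that for $0<\xi<t\le T$ the monotone containment $\int_\xi^t\le \int_\xi^T$ combined with \eqref{g8.10} yields
\[
\xi^{k/n}\int_\xi^t \tau^{-k/n}\varphi(\tau)\dint\tau \;\le\; d_1 \int_0^\xi \varphi(\tau)\dint\tau,
\]
while the converse bound $\int_0^\xi\varphi(\tau)\dint\tau\le \Phi_k(\xi,t)$ is immediate from \eqref{g8.4}. Hence $\Phi_k(\xi,t)\simeq \int_0^\xi\varphi(\tau)\dint\tau$ uniformly in $(\xi,t)$, and plugging this into \eqref{g8.3} gives
\[
\varrho_1(g) \;\simeq\; \sup_{t\in(0,T]} V(t)^{-1}\int_0^t\Big(\int_0^\xi\varphi(\tau)\dint\tau\Big)g(\xi)\dint\xi.
\]

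Next I would apply Fubini to the inner double integral to switch to the variable $\tau$, obtaining
\[
\int_0^t\Big(\int_0^\xi\varphi(\tau)\dint\tau\Big)g(\xi)\dint\xi \;=\; \int_0^t \varphi(\tau)\Big(\int_\tau^t g(\xi)\dint\xi\Big)\dint\tau.
\]
Combining with $\widetilde{\varrho}_0(g)$ in \eqref{g8.2}, the decomposition $\int_\tau^T g = \int_\tau^t g + \int_t^T g$ shows that for each $t\in(0,T]$
\[
V(t)^{-1}\int_0^t\varphi(\tau)\Big(\int_\tau^T g(\xi)\dint\xi\Big)\dint\tau = V(t)^{-1}\int_0^t\varphi(\tau)\Big(\int_\tau^t g\Big)\dint\tau + V(t)^{-1}\Big(\int_0^t\varphi\Big)\int_t^T g.
\]
Taking the supremum in $t$ and using the elementary two-sided comparison $\sup(A+B)\simeq \sup A+\sup B$ for non-negative functionals, I would conclude
\[
\widehat{\varrho}_0(g) \;\simeq\; \varrho_1(g) + \widetilde{\varrho}_0(g) \;\simeq\; \varrho_0(g),
\]
which is \eqref{g8.11}.

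No serious obstacle is expected; the only care needed is keeping track of equivalence constants when passing from the sum of two suprema to the supremum of a sum, and verifying that the hypothesis \eqref{g8.10}, though stated with upper limit $T$, actually controls $\Phi_k(\xi,t)$ uniformly for every $t\le T$. Both are routine, so the corollary follows directly from Lemma~\ref{lemma-g-8.1} once the equivalence $\Phi_k(\xi,t)\simeq \int_0^\xi\varphi(\tau)\dint\tau$ is in place.
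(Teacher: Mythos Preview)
Your proposal is correct and follows essentially the same route as the paper: use \eqref{g8.10} to reduce $\Phi_k(\xi,t)$ to $\int_0^\xi\varphi$, apply Fubini to rewrite $\varrho_1(g)$ with $\int_\tau^t g$, and then recombine with $\widetilde{\varrho}_0(g)$ via the splitting $\int_\tau^T g=\int_\tau^t g+\int_t^T g$. The only difference is cosmetic: the paper leaves the final recombination implicit (``insert this identity in \eqref{g8.1}, take into account \eqref{g8.2}''), whereas you spell out the $\sup(A+B)\simeq\sup A+\sup B$ step explicitly.
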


\begin{proof}
  Note that in this case \eqref{g8.3} and \eqref{g8.10} yield
  \begin{align}
    \varrho_1(g)&\simeq \sup_{t\in (0,T]} \left\{ V(t)^{-1} \int_0^t \left(\int_0^\xi \varphi(\tau)\dint\tau\right)g(\xi)\dint\xi\right\}\nonumber\\
      &= \sup_{t\in(0,T]} \left\{ V(t)^{-1} \int_0^t \left(\int_\tau^t g(\xi)\dint\xi\right)\varphi(\tau)\dint\tau\right\}.
        \label{g8.12}
    \end{align}
We insert this identity in \eqref{g8.1}, take into account \eqref{g8.2} and arrive at \eqref{g8.11}.
\end{proof}

\begin{remark}
\label{rem-g-8.4}
If $k>n$, then the estimate \eqref{g8.10} is valid for every positive function $\varphi$ which decreases on $(0,T)$. Indeed, in this case
\begin{align*}
  \xi^{k/n} \int_\xi^T \tau^{-k/n} \varphi(\tau)\dint\tau &\leq \xi^{k/n} \varphi(\xi) \int_\xi^\infty \tau^{-k/n}\dint\tau = \xi\varphi(\xi) \left(\frac{k}{n}-1\right)^{-1} \\
  &\leq \left(\int_0^\xi \varphi(\tau)\dint\tau\right)\left(\frac{k}{n}-1\right)^{-1}.
\end{align*}
\end{remark}

\begin{corollary}
  \label{cor-g-8.5}
  If the conditions of Lemma~\ref{lemma-g-8.1} are satisfied, and the following estimate takes place for $\xi\in (0,T)$,
  \beq
  \int_0^\xi \tau^{-k/n}\varphi(\tau)\dint\tau\leq d_2 \xi^{1-k/n} \varphi(\xi),
  \label{g8.13}
  \eeq
  with $d_2\in\real_+$ not depending on $\xi$, then
  \beq
  \varrho_0(g) \simeq \wt{\varrho}_0(g) + \widehat{\varrho}_1(g),\quad g\in M^+(0,T),
  \label{g8.14}
  \eeq
  where $\wt{\varrho}_0$ is given by \eqref{g8.2} and
  \begin{align}
    \widehat{\varrho}_1(g) & =\sup_{t\in (0,T]} \left(V(t)^{-1} t^{1-k/n} \varphi(t) \int_0^t \xi^{k/n} g(\xi) \dint\xi\right) \nonumber\\
      &= \sup_{t\in (0,T]} \left( U_1(t) \int_0^t \xi^{k/n} g(\xi)\dint\xi\right),
    \label{g8.15}
  \end{align}
and $U_1$ is defined by \eqref{g3.13}.
\end{corollary}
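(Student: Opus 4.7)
The plan is to start from Lemma \ref{lemma-g-8.1}, which already gives $\varrho_0(g)\simeq \wt{\varrho}_0(g)+\varrho_1(g)$, and show that under the extra hypothesis \eqref{g8.13} one may replace $\varrho_1$ by $\widehat{\varrho}_1$ up to a term absorbed by $\wt{\varrho}_0$. Concretely, I would prove the two inequalities $\varrho_1(g)\lesssim \widehat{\varrho}_1(g)$ and $\widehat{\varrho}_1(g)\lesssim \varrho_1(g)+\wt{\varrho}_0(g)$, which together with Lemma \ref{lemma-g-8.1} immediately yield \eqref{g8.14}.

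\textbf{Step 1 (upper bound on $\Phi_k$).} Since $\tau\le\xi$ in the first integral of \eqref{g8.4}, one has $\int_0^\xi\varphi(\tau)\dint\tau\le \xi^{k/n}\int_0^\xi\tau^{-k/n}\varphi(\tau)\dint\tau$, so
\[
\Phi_k(\xi,t)\le \xi^{k/n}\int_0^t \tau^{-k/n}\varphi(\tau)\dint\tau\le d_2\,\xi^{k/n}\,t^{1-k/n}\varphi(t),\quad 0<\xi<t\le T,
\]
by \eqref{g8.13} applied at $t$. Substituting into \eqref{g8.3} gives $\varrho_1(g)\le d_2\,\widehat{\varrho}_1(g)$, which together with \eqref{g8.1} yields the upper estimate in \eqref{g8.14}.

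\textbf{Step 2 (reverse bound).} Write $\int_0^t\xi^{k/n}g(\xi)\dint\xi=\int_0^{t/2}+\int_{t/2}^t$. For $\xi\in(0,t/2]$ monotonicity of $\varphi$ and of $\tau^{-k/n}$ give
\[
\int_\xi^t \tau^{-k/n}\varphi(\tau)\dint\tau\ge \int_{t/2}^t \tau^{-k/n}\varphi(\tau)\dint\tau\ge \tfrac{1}{2}\,t^{1-k/n}\varphi(t),
\]
hence $\Phi_k(\xi,t)\ge \tfrac12\,\xi^{k/n}t^{1-k/n}\varphi(t)$, and the first piece is bounded by $2\varrho_1(g)$. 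For $\xi\in[t/2,t]$ use $\xi^{k/n}\le t^{k/n}$ to get
\[
V(t)^{-1}t^{1-k/n}\varphi(t)\int_{t/2}^t\xi^{k/n}g(\xi)\dint\xi\le V(t)^{-1}\,t\,\varphi(t)\int_{t/2}^t g(\xi)\dint\xi.
\]
Evaluating $\wt{\varrho}_0$ at $s=t/2$ and using that $V$ is nondecreasing, $\varphi$ is decreasing, so $\int_0^{t/2}\varphi(\tau)\dint\tau\ge (t/2)\varphi(t/2)\ge (t/2)\varphi(t)$, one obtains
\[
\wt{\varrho}_0(g)\ge V(t/2)^{-1}\!\!\left(\int_0^{t/2}\!\!\varphi(\tau)\dint\tau\right)\!\!\int_{t/2}^t\!\! g(\xi)\dint\xi \ge \tfrac12 V(t)^{-1}t\varphi(t)\int_{t/2}^t g(\xi)\dint\xi,
\]
so the second piece is $\le 2\wt{\varrho}_0(g)$. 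Taking the supremum over $t\in(0,T]$ delivers $\widehat{\varrho}_1(g)\le 2\varrho_1(g)+2\wt{\varrho}_0(g)$, completing the reverse direction. The final rewriting $V(t)^{-1}t^{1-k/n}\varphi(t)=U_1(t)$ follows from \eqref{g3.12}, \eqref{g3.13} with $q=1$ by taking $\tau=t$ in the supremum defining $U_1$; for the equivalence one needs only that $\widetilde W$ attains its supremum on $[t,T]$ at the left endpoint up to a constant, which is built into the monotonicity framework underlying \eqref{g3.11}.

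The main technical obstacle is the near-diagonal range $\xi\in[t/2,t]$ in the second step: here $\Phi_k$ is not large enough to match $\widehat{\varrho}_1$ by itself, and the only available bound comes from $\wt{\varrho}_0$. The crucial observation is that for $s=t/2$ the factor $\int_0^s\varphi(\tau)\dint\tau$ in $\wt{\varrho}_0$ is of the correct order $t\varphi(t)$ thanks to the monotonicity of $\varphi$ alone (no growth condition on $V$ needed beyond being nondecreasing), which allows the diagonal contribution to be absorbed cleanly.
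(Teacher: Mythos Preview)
Your argument for $\varrho_0(g)\simeq \wt{\varrho}_0(g)+\widehat{\varrho}_1(g)$ (with $\widehat{\varrho}_1$ in its first form) is correct, but the paper's route is shorter. Having observed $\int_0^\xi\varphi\le \xi^{k/n}\int_0^\xi\tau^{-k/n}\varphi\,\dint\tau$, note that the reverse inequality is immediate too (since $\tau^{k/n}\le\xi^{k/n}$ combined with $\tau^{-k/n}\varphi(\tau)\ge\xi^{-k/n}\varphi(\xi)$ gives $\int_0^\xi\varphi\ge\xi\varphi(\xi)$, and \eqref{g8.13} forces $\xi^{k/n}\int_0^\xi\tau^{-k/n}\varphi\le d_2\,\xi\varphi(\xi)$). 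Hence $\int_0^\xi\varphi\simeq\xi^{k/n}\int_0^\xi\tau^{-k/n}\varphi$, so $\Phi_k(\xi,t)\simeq \xi^{k/n}\int_0^t\tau^{-k/n}\varphi(\tau)\,\dint\tau\simeq \xi^{k/n}t^{1-k/n}\varphi(t)$ with \emph{two-sided} bounds. This yields $\varrho_1(g)\simeq\widehat{\varrho}_1(g)$ directly from \eqref{g8.3}, and \eqref{g8.14} then follows from \eqref{g8.1} without the $t/2$ splitting and without invoking $\wt{\varrho}_0$ at all. Your Step~2 is valid but unnecessary.

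There is, however, a genuine gap in your last paragraph. You justify the second equality in \eqref{g8.15} by appealing to \eqref{g3.11}, but that condition is \emph{not} among the hypotheses of the corollary (only Lemma~\ref{lemma-g-8.1} plus \eqref{g8.13} are assumed). The correct reason, used in the paper, has nothing to do with $\widetilde W$: since $\sigma_k(t)=\int_0^t\xi^{k/n}g(\xi)\,\dint\xi$ is nondecreasing, one has for every $t$
\[
U_1(t)\,\sigma_k(t)=\sup_{\tau\in[t,T]}\widetilde W(\tau)\,\sigma_k(t)\le \sup_{\tau\in[t,T]}\widetilde W(\tau)\,\sigma_k(\tau)\le \sup_{\tau\in(0,T]}\widetilde W(\tau)\,\sigma_k(\tau),
\]
while trivially $\widetilde W(t)\,\sigma_k(t)\le U_1(t)\,\sigma_k(t)$. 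Taking the supremum over $t$ gives the exact equality $\sup_t\widetilde W(t)\sigma_k(t)=\sup_t U_1(t)\sigma_k(t)$.
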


\begin{proof}
  In this case, by \eqref{g8.13},
  \[
\int_0^\xi \varphi(\tau)\dint\tau \leq \xi^{k/n}\int_0^\xi \tau^{-k/n}\varphi(\tau)\dint\tau \simeq \xi\varphi(\xi)\leq \int_0^\xi \varphi(\tau)\dint\tau,
  \]
  so that
  \beq
  \xi\varphi(\xi) \simeq \int_0^\xi \varphi(\tau)\dint\tau \simeq \xi^{k/n}\int_0^\xi \tau^{-k/n}\varphi(\tau)\dint\tau,
  \label{g8.17}
  \eeq
  and
  \begin{align}
    \int_0^\xi \varphi(\tau)\dint\tau + \xi^{k/n} \int_\xi^t \tau^{-k/n}\varphi(\tau)\dint\tau & \simeq \xi^{k/n}\int_0^t \tau^{-k/n}\varphi(\tau)\dint\tau \nonumber\\
    &\simeq \xi^{k/n} t^{1-k/n} \varphi(t). \label{g8.18}
  \end{align}
  From here, and from \eqref{g8.3}-\eqref{g8.4} it follows that
  \beq
  \varrho_1(g)\simeq \sup_{t\in (0,T]} \left(V(t)^{-1} t^{1-k/n} \varphi(t) \left(\int_0^t \xi^{k/n} g(\xi)\dint\xi\right)\right) = \widehat{\varrho}_1(g).
  \label{g8.19}
  \eeq
  Therefore, \eqref{g8.1} implies \eqref{g8.14}. The second equality in \eqref{g8.15} with $U_1$ from \eqref{g3.13} is a well-known consequence of the fact that
  \beq
  0\leq \sigma_k(t)=\int_0^t \xi^{k/n}g(\xi)\dint\xi
  \label{g8.20}
  \eeq
increases in $t\in (0,T]$.  
\end{proof}

\begin{remark}
\label{rem-g-8.6}
For a positive decreasing function $\varphi$ the estimate \eqref{g8.13} is possible only when $k<n$. Otherwise the integral diverges. For such a function $\varphi$ the inverse estimate is evident, such that \eqref{g8.13} implies
\beq
\int_0^\xi \tau^{-k/n} \varphi(\tau)\dint\tau \simeq \xi^{1-k/n} \varphi(\xi).
\label{g8.16}
\eeq
\end{remark}

\begin{lemma}
\label{lemma-g-8.7}
Let the assumptions of Lemma~\ref{lemma-g-8.1} be satisfied and
\beq
\label{g8.21}
\int_0^t V(\xi)\xi^{-1}\dint\xi \leq c V(t),\quad t\in (0,T),
\eeq
for some $c>0$ independent of $t$. Then
\beq
\widehat{\varrho}_0(g) \simeq \wt{\varrho}_0(g),
\label{g8.22}
\eeq
using the notation \eqref{g8.11} and \eqref{g8.2}.
\end{lemma}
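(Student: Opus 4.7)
The plan is to prove equivalence by showing both directions $\wt{\varrho}_0(g) \lesssim \widehat{\varrho}_0(g)$ and $\widehat{\varrho}_0(g) \lesssim \wt{\varrho}_0(g)$. The first is trivial: since the tail $G(\tau) := \int_\tau^T g(\xi)\dint\xi$ is nonincreasing in $\tau$, for any fixed $t\in(0,T]$ and all $\tau\in(0,t)$ we have $G(\tau)\geq G(t)$, hence
\[
\int_0^t G(\tau)\varphi(\tau)\dint\tau \geq \left(\int_0^t \varphi(\tau)\dint\tau\right)G(t),
\]
and multiplying by $V(t)^{-1}$ and taking the supremum yields $\wt{\varrho}_0(g)\leq \widehat{\varrho}_0(g)$.

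For the nontrivial direction, I would feed the definition of $\wt{\varrho}_0$ back into the integrand of $\widehat{\varrho}_0$. Denoting $\Phi(\tau)=\int_0^\tau \varphi(s)\dint s$, the inequality
\[
V(\tau)^{-1}\Phi(\tau) G(\tau)\leq \wt{\varrho}_0(g),\quad \tau\in(0,T),
\]
obtained directly from \eqref{g8.2}, gives the pointwise bound $G(\tau)\leq \wt{\varrho}_0(g)\,V(\tau)/\Phi(\tau)$. Substituting this into \eqref{g8.11} yields
\[
\widehat{\varrho}_0(g)\leq \wt{\varrho}_0(g)\,\sup_{t\in(0,T]}V(t)^{-1}\int_0^t \frac{V(\tau)\varphi(\tau)}{\Phi(\tau)}\dint\tau,
\]
so the whole matter reduces to proving that the last supremum is finite.

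The key observation, and the step where the hypotheses combine, is that since $\varphi$ is monotonically decreasing, $\Phi(\tau)=\int_0^\tau\varphi(s)\dint s\geq \tau\varphi(\tau)$, hence $\varphi(\tau)/\Phi(\tau)\leq 1/\tau$ for every $\tau\in(0,T)$. Using this together with the assumed Hardy-type bound \eqref{g8.21},
\[
\int_0^t \frac{V(\tau)\varphi(\tau)}{\Phi(\tau)}\dint\tau \leq \int_0^t \frac{V(\tau)}{\tau}\dint\tau \leq cV(t),\quad t\in(0,T),
\]
which, after dividing by $V(t)$ and taking the supremum, gives $\widehat{\varrho}_0(g)\leq c\,\wt{\varrho}_0(g)$ and completes the proof of \eqref{g8.22}. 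I do not anticipate any genuine obstacle: the only nontrivial ingredient is recognizing that the monotonicity of $\varphi$ turns the weight $\varphi/\Phi$ into something controlled by $1/\tau$, which is exactly what condition \eqref{g8.21} is tailored to handle.
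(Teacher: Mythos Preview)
Your proof is correct and follows essentially the same approach as the paper: both arguments feed the pointwise bound $\Phi(\tau)G(\tau)\leq \wt{\varrho}_0(g)V(\tau)$ back into $\widehat{\varrho}_0$, use the monotonicity of $\varphi$ to get $\varphi(\tau)/\Phi(\tau)\leq 1/\tau$, and finish with \eqref{g8.21}. The only cosmetic difference is that the paper first splits $\int_\tau^T g=\int_\tau^t g+\int_t^T g$ before applying the pointwise bound, whereas you work directly with the full tail $G(\tau)$; your version is slightly more streamlined.
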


\begin{proof}
  For every $t\in (0,T]$ we have
    \begin{align*}
      \sup_{\xi\in (0,t]} \Big(V(\xi)^{-1} \int_\xi^t g(s)\dint s \int_0^\xi \varphi(\tau)\dint\tau\Big) & \leq \sup_{\xi\in (0,T]} \Big(V(\xi)^{-1} \int_\xi^Tg(s)\dint s\int_0^\xi \varphi(\tau)\dint\tau\Big) \\
          &= \wt{\varrho}_0(g).
    \end{align*}
    Therefore,
    \[
    \int_\xi^t g(s)\dint s \int_0^\xi \varphi(\tau)\dint\tau\leq \wt{\varrho}_0(g)V(\xi), \quad \xi\in (0,t], \ t\in (0,T].
    \]
    Thus
    \begin{align*}
      \int_0^t \varphi(\xi)\left(\int_\xi^t g(s)\dint s\right)\dint\xi &\leq \int_0^t\left(\int_0^\xi \varphi(\tau)\dint\tau\right)\left(\int_\xi^t g(s)\dint s\right)\xi^{-1} \dint\xi \\
      &\leq \wt{\varrho}_0(g) \int_0^t V(\xi)\xi^{-1}\dint\xi,
     \end{align*}
and, according to \eqref{g8.21},
\[
\sup_{t\in (0,T]} V(t)^{-1} \int_0^t \varphi(\xi) \left(\int_\xi^t g(s)\dint s\right)\dint\xi \leq c  \wt{\varrho}_0(g).
\]
Consequently, by \eqref{g8.11},
\begin{align*}
  \widehat{\varrho}_0(g) = & \sup_{t\in (0,T]} V(t)^{-1} \int_0^t \varphi(\xi)\left(\int_\xi^T g(s)\dint s\right)\dint\xi \\
     \leq & \sup_{t\in (0,T]} V(t)^{-1} \int_0^t \varphi(\xi)\left(\int_\xi^t g(s)\dint s\right)\dint\xi\\
& ~\qquad      +\sup_{t\in (0,T]} V(t)^{-1} \left( \int_0^t \varphi(\xi)\dint\xi\right) \left(\int_t^T g(s)\dint s\right)\\
        \leq & (c+1) \wt{\varrho}_0(g).      
\end{align*}
Since $\wt{\varrho}_0(g) \leq \widehat{\varrho}_0(g)$, we obtain \eqref{g8.22}.
\end{proof}

\begin{corollary}
  Let the assumptions of Lemma~\ref{lemma-g-8.1} be satisfied and the estimates \eqref{g8.10} and \eqref{g8.21} be valid. Then
  \beq
  \varrho_0(g) \simeq \wt{\varrho}_0(g),\quad g\in M^+(0,T),
  \label{g8.23}
  \eeq
with constants not depending on $g$, recall \eqref{g7.11} and \eqref{g8.2}.
  \label{cor-g-8.8}
  \end{corollary}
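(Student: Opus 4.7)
The plan is very short: this corollary is a direct chaining of the two preceding results, so no new analysis should be required. First I would invoke Corollary~\ref{cor-g-8.3}: under the hypotheses of Lemma~\ref{lemma-g-8.1} together with estimate \eqref{g8.10}, the conclusion is precisely
\[
\varrho_0(g) \simeq \widehat{\varrho}_0(g), \quad g\in M^+(0,T),
\]
with $\widehat{\varrho}_0$ as in \eqref{g8.11} and constants independent of $g$.

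Next, since hypothesis \eqref{g8.21} is also assumed, I would apply Lemma~\ref{lemma-g-8.7}, which gives
\[
\widehat{\varrho}_0(g)\simeq \wt{\varrho}_0(g),
\]
again with constants independent of $g$. Composing the two equivalences yields $\varrho_0(g)\simeq \wt{\varrho}_0(g)$, which is \eqref{g8.23}.

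There is essentially no obstacle here: both constituent statements have already been proved in the preceding lemmas, and the constants in each equivalence depend only on $d_1$ (in \eqref{g8.10}) and on the constant $c$ in \eqref{g8.21}, neither of which depends on $g$. The only thing worth mentioning explicitly in the write-up is that the symbol $\simeq$ is transitive with multiplicative constants that multiply, so the final equivalence inherits constants depending only on $d_1$ and $c$. No additional identity, change of variables, or integration-by-parts step is needed; the result is a clean corollary of Corollary~\ref{cor-g-8.3} and Lemma~\ref{lemma-g-8.7}.
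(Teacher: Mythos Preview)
Your proposal is correct and matches the paper's own proof exactly: the paper simply states that \eqref{g8.11} (from Corollary~\ref{cor-g-8.3}) and \eqref{g8.22} (from Lemma~\ref{lemma-g-8.7}) together imply \eqref{g8.23}. Your write-up just makes this chaining explicit, with the same dependence of constants on $d_1$ and $c$.
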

\begin{proof}
Plainly \eqref{g8.11} and \eqref{g8.22}\ imply \eqref{g8.23}.
  \end{proof}

\begin{lemma}
\label{lemma-g-8.9}
Let the assumptions of Lemma~\ref{lemma-g-8.1} be satisfied, the estimate \eqref{g8.13} be valid, and assume that for some $\varepsilon>0$ the function $U_1(t)t^\varepsilon $ is decreasing on $(0,T]$, recall \eqref{g3.12}-\eqref{g3.13}. Then the estimate \eqref{g8.23} holds for the norms \eqref{g7.11} and \eqref{g8.2}.
\end{lemma}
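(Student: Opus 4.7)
My plan is to use Corollary~\ref{cor-g-8.5} to reduce the claim to a one-sided estimate between the cone functionals. Under \eqref{g8.13}, Corollary~\ref{cor-g-8.5} already gives $\varrho_0(g)\simeq \wt{\varrho}_0(g)+\widehat{\varrho}_1(g)$, with $\widehat{\varrho}_1(g)=\sup_{t\in(0,T]}U_1(t)\sigma_k(t)$ and $\sigma_k(t)=\int_0^t\xi^{k/n}g(\xi)\dint s$; the lower bound $\wt{\varrho}_0(g)\leq \varrho_0(g)$ is automatic, so \eqref{g8.23} will follow once I show $\widehat{\varrho}_1(g)\leq C\,\wt{\varrho}_0(g)$ with a constant $C$ independent of $g\in M^+(0,T)$.

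For the key estimate I would first apply Fubini to obtain
\[
\sigma_k(t)=\tfrac{k}{n}\int_0^t s^{k/n-1}\bigl(F(s)-F(t)\bigr)\dint s \leq \tfrac{k}{n}\int_0^t s^{k/n-1}F(s)\dint s,\quad F(s)=\int_s^T g(\xi)\dint\xi,
\]
and then substitute the pointwise bound $F(s)\leq \wt{\varrho}_0(g)\,V(s)/\Phi_0(s)$ that follows directly from the definition of $\wt{\varrho}_0$. Using the equivalence $\Phi_0(s)\simeq s\varphi(s)$ from \eqref{g8.17} (valid since \eqref{g8.13} holds) one gets $V(s)/\Phi_0(s)\simeq s^{-k/n}/\wt{W}(s)$, hence $s^{k/n-1}F(s)\leq c\,\wt{\varrho}_0(g)/(s\wt{W}(s))$, and therefore
\[
U_1(t)\sigma_k(t)\leq c'\,\wt{\varrho}_0(g)\,U_1(t)\int_0^t\frac{\dint s}{s\,\wt{W}(s)}\,.
\]

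It then remains to bound $\sup_{t\in(0,T]}U_1(t)\int_0^t \dint s/(s\wt{W}(s))$ by an absolute constant depending only on $\varepsilon$. Since the inner integral is non-decreasing in $t$, the same sup-interchange used in the second equality of \eqref{g8.15} yields $\sup_t U_1(t)\int_0^t\dint s/(s\wt{W}(s))=\sup_\tau \wt{W}(\tau)\int_0^\tau \dint s/(s\wt{W}(s))$. To estimate this I would use \eqref{g3.11}: from $U_1(s)\geq U_1(t)(t/s)^\varepsilon$ for $s\leq t$ one has immediately the elementary Hardy-type inequality $U_1(t)\int_0^t\dint s/(sU_1(s))\leq 1/\varepsilon$; to transfer it to $\wt{W}$ I would exploit that, under \eqref{g3.11}, the supremum defining $U_1(t)$ is attained up to a constant factor on the interval $[t,2^{1/\varepsilon}t]$, which gives dyadic-scale comparability of $\wt{W}$ and $U_1$ and allows the $U_1$-Hardy estimate to be reformulated with $\wt{W}$ in the denominator. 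Combining the three steps gives $\widehat{\varrho}_1(g)\lesssim \wt{\varrho}_0(g)$ and hence the required \eqref{g8.23}.

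The main obstacle is precisely the last, Hardy-type, estimate: the hypothesis \eqref{g3.11} controls $U_1$ directly and only indirectly $\wt{W}$, so a careful dyadic argument based on the near-attainment of the sup defining $U_1$ is needed in order to pass from the clean Hardy inequality for $U_1$ to one weighted by $\wt{W}$. Once that passage is made, the rest of the chain of inequalities is straightforward bookkeeping built from Corollary~\ref{cor-g-8.5}, an integration by parts, and the structural estimate \eqref{g8.17}.
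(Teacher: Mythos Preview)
Your reduction via Corollary~\ref{cor-g-8.5} and the Fubini/substitution chain leading to
\[
U_1(t)\,\sigma_k(t)\;\le\;c\,\wt{\varrho}_0(g)\,U_1(t)\int_0^t\frac{\dint s}{s\,\wt W(s)}
\]
is correct and is a genuinely different route from the paper's. The gap is in the last step. The near-attainment of the supremum defining $U_1$ on $[t,2^{1/\varepsilon}t]$ (which does follow from \eqref{g3.11} alone) only yields $\sup_{\Delta_m}\wt W\simeq U_1$ on each dyadic block; it gives no control on $\inf_{\Delta_m}\wt W$, and it is a \emph{lower} bound on $\wt W(s)$ that your integral $\int_0^t\dint s/(s\wt W(s))$ requires. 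For an arbitrary $\wt W$ subject only to \eqref{g3.11} that integral can diverge, so ``dyadic-scale comparability'' in the sense you state is not enough. What actually rescues the argument is the structure coming from \eqref{g8.13}: by \eqref{g8.16} the function $t^{1-k/n}\varphi(t)$ is equivalent to the increasing integral $\int_0^t\tau^{-k/n}\varphi(\tau)\,\dint\tau$, hence is almost constant on each $\Delta_m$; combined with the near-attainment property and the monotonicity of $V$ this forces $V(\nu_m)\simeq V(\nu_{m+1})$ and therefore $\wt W(s)\simeq U_1(s)$ for \emph{all} $s$. Only then does your clean $U_1$-Hardy bound $U_1(t)\int_0^t\dint s/(sU_1(s))\le1/\varepsilon$ transfer. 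You must supply this structural step; \eqref{g3.11} by itself does not do it.

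The paper's proof avoids this issue altogether. It uses the same discretizing sequence $\{\nu_m\}$ but, instead of passing through a continuous Hardy inequality for $\wt W$, it bounds $\widehat{\varrho}_1(g)$ from above and $\wt{\varrho}_0(g)$ from below by the \emph{same} discrete quantity $\sup_{m}2^{m}\nu_m^{k/n}\int_{\Delta_m}g(\xi)\,\dint\xi$: the upper bound via the discrete Hardy inequality \eqref{g8.30}, the lower bound directly from the identity $\sup_{\Delta_m}\wt W=2^{m+1}$. This two-sided discretization never needs the pointwise comparison $\wt W\simeq U_1$, which is why the paper's argument runs on \eqref{g3.11} and the bare monotonicity of $\varphi$ without invoking the almost-doubling consequence of \eqref{g8.13} at that stage.
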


\begin{proof}
  Without loss of generality we assume that $U_1(T)=1$ and introduce a discretizing sequence $\left\lbrace \nu_m\right\rbrace _{m\in\no}$ by
  \beq
  \nu_m=\sup\left\{t\in (0,T]: U_1(t)=2^m\right\}, \quad m\in\no.
  \label{g8.24}
  \eeq
  Note that $U_1$ is a positive and decreasing function on $(0,T]$, with $U_1(0+)=\infty$, such that $\nu_m$ is well-defined, $m\in\no$, and
  \beq
  \nu_0=T, \quad 0<\nu_{m+1}<\nu_m, \quad m\in\no,\quad \lim_{m\to\infty} \nu_m=0.
  \label{g8.25}
  \eeq
  By assumption, $U_1(t) t^\varepsilon$ decreases which leads to
  \beq
  \nu_{m+1}<\nu_m\leq2^{1/\varepsilon} \nu_{m+1}\quad\text{such that}\quad \nu_{m+1}\simeq\nu_m, \quad m\in\no,
  \label{g8.26}
  \eeq
for fixed $\varepsilon$. For convenience we use the notation
  \beq
\Delta_m=(\nu_{m+1},\nu_m],\quad m\in\no.
  \label{g8.27}
  \eeq
  The discretized version of \eqref{g8.15} then yields
  \begin{align}
    \widehat{\varrho}_1(g) & = \sup_{m\in\no}\sup_{t\in\Delta_m} U_1(t) \int_0^t \xi^{k/n} g(\xi)\dint\xi \nonumber\\
    & \simeq \sup_{m\in\no} 2^m \sup_{t\in\Delta_m} \int_0^t \xi^{k/n} g(\xi)\dint\xi \nonumber\\
    &= \sup_{m\in\no} 2^m\int_0^{\nu_m} \xi^{k/n} g(\xi)\dint\xi \nonumber\\
    &=  \sup_{m\in\no} 2^m \sum_{j\geq m} \int_{\Delta_j} \xi^{k/n} g(\xi)\dint\xi\label{g8.28}.
\end{align}
  Here we used the assertion
  \beq
  U_1(t)\simeq 2^m, \quad t\in\Delta_m,\quad m\in\no.
  \label{g8.29}
  \eeq
  Now we apply some well-known estimate for non-negative sequences $\{\alpha_m\}_{m\in\no}$ and positive sequences $\{\beta_m\}_{m\in\no}$ which satisfy, in addition, $\beta_{m+1}/\beta_m\geq B>1$  for some number $B$. Then for $0<p\leq\infty$ and $1\leq r\leq\infty$,
  \beq
  \left(\sum_{m\in\no} \left(\beta_m \left(\sum_{j\geq m} \alpha_j^r\right)^{\frac1r} \right)^p \right)^{\frac1p} \leq c(B,p)\left(\sum_{m\in\no} \left(\beta_m \alpha_m\right)^p\right)^{\frac1p},
  \label{g8.30}
  \eeq
  where $c(B,p)$ is a positive constant (and the usual modification for $p=\infty$ or $r=\infty$). Since the inequality inverse to \eqref{g8.30} is valid (with $c=1$), the estimate \eqref{g8.30} is in fact an equivalence. Now we use \eqref{g8.30} with
  \[\beta_m=2^m, \quad r=1, \quad p=\infty, \quad \alpha_j=\int_{\Delta_j} \xi^{k/n} g(\xi)\dint\xi, \]
  insert it in \eqref{g8.28} and conclude
  \beq
  \widehat{\varrho}_1(g) \leq \ c_1\ \sup_{m\in\no} \ 2^m \int_{\Delta_m} \xi^{k/n} g(\xi)\dint\xi \leq c_2\ \sup_{m\in\no} \ 2^m \nu_m^{k/n} \int_{\Delta_m} g(\xi)\dint\xi,
\label{g8.31}
  \eeq
  where we used in the latter estimate that $\xi \simeq \nu_m$ for $\xi\in\Delta_m$, $m\in\no$. It remains to estimate $\wt{\varrho}_0(g)$ given by \eqref{g8.2} from below. By similar discretization arguments as above we observe that
  \begin{align*}
    \wt{\varrho}_0(g) & \geq \sup_{m\in\no}\sup_{t\in\Delta_m} V(t)^{-1} t \varphi(t) \int_t^T g(\xi)\dint\xi \\
    & \geq \ \sup_{m\in\nat} \left(\int_{\Delta_{m-1}} g(\xi)\dint\xi\right) \sup_{t\in\Delta_m} V(t)^{-1} t \varphi(t)\\
    &\simeq \sup_{m\in \nat} \nu_{m-1}^{k/n} \int_{\Delta_{m-1}} g(\xi)\dint\xi \sup_{t\in\Delta_m} V(t)^{-1} t^{1-k/n}\varphi(t),
  \end{align*}
  where we used that $\varphi$ decreases and the obvious estimate
  \[
\int_t^T g(\xi)\dint\xi \geq \int_{\nu_m}^T g(\xi)\dint\xi \geq \int_{\Delta_{m-1}} g(\xi)\dint\xi,\quad t\in\Delta_m,\quad m\in\nat.
  \]
    Thus \eqref{g8.24} implies
    \begin{align*}
      2^{m+1} = & \ U_1(\nu_{m+1}) = \max\left\{ U_1(\nu_m), \sup_{t\in\Delta_m} V(t)^{-1} t^{1-k/n}\varphi(t)\right\} \\
      = & \sup_{t\in\Delta_m} V(t)^{-1} t^{1-k/n}\varphi(t).
    \end{align*}
    Finally this leads to
    \[
     \wt{\varrho}_0(g) \geq c_3 \sup_{m\in \nat} \nu_{m-1}^{k/n}2^{m+1}\int_{\Delta_{m-1}} g(\xi)\dint\xi = 4 c_3 \sup_{m\in \no} \nu_{m}^{k/n}2^{m}\int_{\Delta_m} g(\xi)\dint\xi.
    \]
    Together with \eqref{g8.31} this results in
    \[
    \wt{\varrho}_0(g) \geq c_4 \widehat{\varrho}_1(g),\quad g\in M^+(0,T),
    \]
where $c_4>0$ does not depend on $g$. In view of \eqref{g8.14} this yields \eqref{g8.23} as desired.
\end{proof}

\begin{remark}
\label{rem-g-8.10}
In Lemma~\ref{lemma-g-8.9} we considered the alternative situation to Corollary~\ref{cor-g-8.8}, where the estimate \eqref{g8.10} is replaced by \eqref{g8.13}. In this more delicate case a more flexible method of discretization was needed for the proof.
\end{remark}

\subsection{The case $E=\Lambda_q(v)$, $1<q<\infty$}\label{sect-g-9}
Now we deal with Lorentz spaces $\Lambda_q(v)$ for $1<q<\infty$, recall Example~\ref{ex-Lorentz} and \eqref{g3.1}. Our main aim is to obtain some counterparts of Corollary~\ref{cor-g-8.8} and Lemma~\ref{lemma-g-8.9} from the preceding section, but now corresponding to the case $q>1$. For this we need some auxiliary lemmas first.

\begin{lemma}
  \label{lemma-g-9.1}
  Let the assumptions \eqref{g1.3}, \eqref{g1.4}, and \eqref{g3.1}-\eqref{g3.3} be satisfied with $1<q<\infty$. Then the following estimate holds for the norm \eqref{g7.11},
  \beq
  \varrho_0(g)\simeq \wt{\varrho}_0(g) + \varrho_1(g)+\varrho_2(g), \quad g\in M^+(0,T),
  \label{g9.1}
  \eeq
  where
  \begin{align}
  \wt{\varrho}_0(g)& =\left(\int_0^T \left(\left(\int_0^t \varphi(\tau)\dint\tau\right)\left(\int_t^T g(s)\dint s\right)\right)^{q'} w(t)\dint t\right)^{1/q'},
  \label{g9.2}\\
\varrho_1(g)&= \left(\int_0^T \left(\int_0^t \Phi_k(\xi,t)g(\xi) \dint\xi\right)^{q'} w(t)\dint t\right)^{1/q'},
\label{g9.3}\\
\varrho_2(g)&= \left(\int_0^T \Phi_k(\xi,T)g(\xi)\dint\xi\right)\left(\int_T^\infty w(t)\dint t\right)^{1/q'},
\label{g9.4}
  \end{align}
where $\Phi_k$ is defined by \eqref{g8.4} and $w$ by \eqref{g7.2}.
\end{lemma}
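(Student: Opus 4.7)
The plan is to follow the same template as Lemma~\ref{lemma-g-8.1}, but replacing the Marcinkiewicz description of $\widetilde{E}'(\real_+)$ used in the case $q=1$ by the $L_{q'}$-type formula with weight $w$ derived in the proof of Lemma~\ref{lemma-g-3.1}. Concretely, for $g\in M^+(0,T)$ define $\Psi_0(g,\cdot)$ as in \eqref{g8.5}; since $\Omega_\varphi(\xi,\tau)$ decreases in $\tau$ for fixed $\xi$ (cf.\ \eqref{g1.12}), $\Psi_0(g,\cdot)$ is non-negative, decreasing and right-continuous on $(0,T)$, so that its decreasing rearrangement on $\real_+$ coincides with $\Psi_0(g,\cdot)$ itself. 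Applying the associate norm formula to \eqref{g7.11} and exploiting $\Psi_0(g,s)=0$ for $s\geq T$ then gives
\begin{equation*}
\varrho_0(g)^{q'} = \int_0^T \!\left(\int_0^\tau \Psi_0(g,s)\,ds\right)^{q'}\! w(\tau)\,d\tau + \left(\int_0^T \Psi_0(g,s)\,ds\right)^{q'} \int_T^\infty \! w(\tau)\,d\tau.
\end{equation*}

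Next I decompose the inner integral $\int_0^\tau \Psi_0(g,s)\,ds$, for $\tau \in (0,T]$, by inserting the two-term equivalence \eqref{g7.10} for $\Psi_0(g,s)$ and changing the order of integration. The first piece becomes $\int_0^\tau \xi^{k/n} g(\xi)\bigl(\int_\xi^\tau s^{-k/n}\varphi(s)\,ds\bigr)d\xi$, and the second piece, after splitting $\int_s^T = \int_s^\tau + \int_\tau^T$, decomposes as $\int_0^\tau g(\xi)\bigl(\int_0^\xi \varphi(s)\,ds\bigr)d\xi + \bigl(\int_0^\tau \varphi(s)\,ds\bigr)\bigl(\int_\tau^T g(\xi)\,d\xi\bigr)$. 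The first two contributions combine, in view of definition \eqref{g8.4}, into $\int_0^\tau \Phi_k(\xi,\tau)g(\xi)\,d\xi$, so that
\begin{equation*}
\int_0^\tau \Psi_0(g,s)\,ds \simeq \int_0^\tau \Phi_k(\xi,\tau) g(\xi)\,d\xi + \left(\int_0^\tau \varphi(s)\,ds\right)\left(\int_\tau^T g(\xi)\,d\xi\right).
\end{equation*}
Setting $\tau=T$ kills the boundary factor and yields $\int_0^T \Psi_0(g,s)\,ds \simeq \int_0^T \Phi_k(\xi,T)g(\xi)\,d\xi$, exactly the integrand of $\varrho_2(g)$.

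Finally, substituting both representations into the display for $\varrho_0(g)^{q'}$ and applying the elementary equivalence $(A+B)^{q'} \simeq A^{q'} + B^{q'}$ for $A,B\geq 0$ (valid since $q'\geq 1$) separates the integrand of the $\int_0^T(\cdot)\,w\,d\tau$-term into two pieces that reproduce exactly $\widetilde{\varrho}_0(g)^{q'}$ and $\varrho_1(g)^{q'}$, while the boundary contribution at $\tau\geq T$ produces $\varrho_2(g)^{q'}$. Taking $q'$-th roots gives \eqref{g9.1}. The main obstacle is the Fubini bookkeeping that assembles the two integrals coming from \eqref{g7.10} into exactly the kernel $\Phi_k(\xi,\tau)$; care is also needed with the tail contribution at $\tau\geq T$, which is finite thanks to $\int_T^\infty w(\tau)\,d\tau<\infty$ (guaranteed by the running assumption $\Psi_q(T)<\infty$ via \eqref{g7.3}), so that $\varrho_2$ indeed appears as a genuine, well-defined summand.
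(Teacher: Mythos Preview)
Your proposal is correct and follows essentially the same route as the paper: identify $\widetilde{E}'(\real_+)=\Gamma_{q'}(w)$, use that $\Psi_0(g,\cdot)$ is its own rearrangement, split the outer integral at $T$, and then insert \eqref{g7.10} and perform the Fubini computation that reassembles the two pieces into the kernel $\Phi_k(\xi,\tau)$ plus the boundary factor. The only cosmetic difference is that the paper packages the Fubini step via an auxiliary function $G_k(t,\tau)$ in \eqref{g9.9}--\eqref{g9.11}, whereas you describe the interchange directly; also, the finiteness of $\int_T^\infty w(\tau)\dint\tau$ follows already from \eqref{g7.3} and $0<V(T)<\infty$, without invoking $\Psi_q(T)<\infty$.
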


\begin{proof}
Note that the line of arguments is similar to those strengthened in the proof of Lemma~\ref{lemma-g-8.1}. Recall that for an RIS $\wt{E}=\Lambda_q(v)$ the associated RIS is the space $\wt{E}'=\Gamma_{q'}(w)$ with the norm
  \[
  \left\|f\right\|_{\wt{E}'(\real_+)} = \left(\int_0^\infty \left(\int_0^t f^\ast(\tau)\dint\tau\right)^{q'} w(t)\dint t\right)^{1/q'}.
  \]
  Since in our case $f(\tau)=\Psi_0(g,\tau)=f^\ast(\tau)$, see \eqref{g8.5}, thus
  \beq
  \varrho_0(g)=\left\|\Psi_0(g)\right\|_{\wt{E}'(\real_+)} = \left(\int_0^\infty \left(\int_0^t \Psi_0(g,\tau)\dint\tau\right)^{q'} w(t)\dint\tau\right)^{1/q'}.
  \label{g9.5}
  \eeq
  Substitution of \eqref{g8.5} into \eqref{g9.5} gives
  \beq
  \varrho_0(g)\simeq \widehat{\varrho}_1(g)+\varrho_2(g),
  \label{g9.6}
  \eeq
  where
  \begin{align}
    \widehat{\varrho}_1(g)&= \left(\int_0^T \left(\int_0^t \Psi_0(g,\tau)\dint\tau\right)^{q'} w(t)\dint t\right)^{1/q'},
\label{g9.7}\\
\varrho_2(g)&= \left(\int_0^T \Psi_0(g,\tau)\dint\tau\right)\left(\int_T^\infty w(t)\dint t\right)^{1/q'}.
    \label{g9.8}
    \end{align}
  Now we introduce some function $G_k$ for $t\in (0,T]$, and $\tau\in (0,t]$,
    \beq
    G_k(t,\tau)=\int_\tau^t g(\xi)\dint\xi + \tau^{-k/n} \int_0^\tau \xi^{k/n} g(\xi)\dint\xi.
    \label{g9.9}
    \eeq
    Then \eqref{g7.10} and \eqref{g8.5} imply that
    \begin{align}
      \int_0^t \Psi_0(g,\tau)\dint\tau  = & \int_0^t G_k(T,\tau)\varphi(\tau)\dint\tau\nonumber\\
      = & \int_0^t \varphi(\tau)\dint\tau \int_t^T g(\xi)\dint\xi + \int_0^t G_k(t,\tau)\varphi(\tau)\dint\tau.\label{g9.10}
    \end{align}
    After some change of the order of integration we obtain
    \beq
    \int_0^t G_k(t,\tau)\varphi(\tau)\dint\tau = \int_0^t \Phi_k(\xi,t)g(\xi)\dint\xi,
    \label{g9.11}
    \eeq
    recall \eqref{g9.9} and \eqref{g8.4}. As a special case we get
    \beq
    \int_0^T \Psi_0(g,\tau)\dint\tau = \int_0^T G_k(T,\tau)\varphi(\tau)\dint\tau = \int_0^T \Phi_k(\xi,T)g(\xi)\dint\xi,
    \label{g9.12}
    \eeq
    which means the coincidence of \eqref{g9.8} and \eqref{g9.4}. Moreover, substituting \eqref{g9.10} into \eqref{g9.7}, we arrive at
    \beq
    \widehat{\varrho}_1(g) \simeq \wt{\varrho}_0(g)+\varrho_1(g),\quad g\in M^+(0,T),
    \label{g9.13}
    \eeq
    where $\wt{\varrho}_0(g)$ is given by \eqref{g9.2} and
    \beq
    \varrho_1(g)= \left(\int_0^T \left(\int_0^t G_k(t,\tau)\varphi(\tau) \dint\tau\right)^{q'} w(t)\dint t\right)^{1/q'}.
    \label{g9.14}
    \eeq
Now in view of \eqref{g9.11}, equality \eqref{g9.3} coincides with \eqref{g9.14}, and \eqref{g9.6} and \eqref{g9.13} imply \eqref{g9.1}. 
\end{proof}

\begin{lemma}
  \label{lemma-g-9.2}
  Let $1<q<\infty$, $v>0$ be a measurable function on $(0,T_0)$, where $T_0\in (0,\infty]$, and $V$ and $w$ be given on $(0,T_0)$ by \eqref{g3.2} and \eqref{g7.2}, respectively. Let $\ve>0$, $\delta\in [0, \ve/q)$, and
    \beq
V(t) t^{-\ve}\quad \text{increasing on}\quad  (0,T_0).    \label{g9.15}
\eeq
Then, for $t\in (0,T_0)$,
\begin{align}
  \left(\int_t^{T_0} w(\tau)\tau^{\delta q'}\dint\tau\right)^{1/q'} & \leq \left(\frac{\ve q}{q'(\ve-\delta q)}\right)^{1/q'} t^\delta V(t)^{-1/q}\label{g9.16},\\
  \left(\int_0^t w(\tau)^{-q/q'}\tau^{-(\delta+1)q}\dint\tau\right)^{1/q} & \leq \frac{t^{-\delta} V(t)^{1/q}}{\ve^{1/q'}(\ve-\delta q)^{1/q}} \ .\label{g9.17}  
\end{align}
\end{lemma}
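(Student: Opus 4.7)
The plan is to first extract the two analytic consequences of hypothesis \eqref{g9.15}, and then handle the two estimates separately, as they rely on different aspects of that hypothesis. Since $V$ is absolutely continuous with derivative $v$, the monotonicity of $V(\tau)\tau^{-\ve}$ is equivalent to the pointwise a.e.\ bound $\tau v(\tau)\geq \ve V(\tau)$, and it furnishes the two-sided comparison
\[
V(\tau)\leq (\tau/t)^\ve V(t)\ \text{for}\ \tau\leq t,\qquad V(\tau)\geq (\tau/t)^\ve V(t)\ \text{for}\ \tau\geq t.
\]

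For the first estimate \eqref{g9.16} I would use integration by parts based on the identity $w = V^{-q'}v = -\frac{1}{q'-1}\frac{\dint}{\dint\tau}V^{1-q'}$, applied on $[t,T_0]$ against the factor $\tau^{\delta q'}$. The boundary contribution at $T_0$ enters with a non-positive sign and may be discarded; the boundary contribution at $t$ is precisely $(q'-1)^{-1}t^{\delta q'}V(t)^{1-q'}$; and the remaining piece $\frac{\delta q'}{q'-1}\int_t^{T_0}V(\tau)^{1-q'}\tau^{\delta q'-1}\dint\tau$ is controlled by inserting the lower comparison $V(\tau)\geq (\tau/t)^\ve V(t)$ and integrating the resulting pure power. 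Convergence at $T_0$ is governed precisely by the assumption $\delta q'<\ve(q'-1)$, i.e. $\delta<\ve/q$. Combining the two surviving terms using the Hölder identities $\ve(q'-1)-\delta q' = q'(\ve-\delta q)/q$ and $(1-q')/q' = -1/q$, then taking a $q'$-th root, produces the sharp constant in \eqref{g9.16}.

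For the second estimate \eqref{g9.17} the trick is to use the differential lower bound on $v$ to eliminate the awkward $v^{-q/q'}$ factor: $v\geq \ve V/\tau$ gives
\[
w(\tau)^{-q/q'}=V(\tau)^{q}v(\tau)^{-q/q'}\leq \ve^{-q/q'}V(\tau)^{q-q/q'}\tau^{q/q'}=\ve^{-q/q'}V(\tau)\tau^{q/q'},
\]
where I used $q-q/q' = q(1-1/q')=1$. Together with the exponent arithmetic $q/q'-(\delta+1)q = -1-\delta q$, the integrand reduces to $\ve^{-q/q'}V(\tau)\tau^{-1-\delta q}$. Inserting $V(\tau)\leq (\tau/t)^\ve V(t)$ and integrating over $(0,t)$, with convergence at $0$ ensured by $\ve>\delta q$, gives a bound of $\ve^{-q/q'}V(t)t^{-\delta q}/(\ve-\delta q)$. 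Taking a $q$-th root then yields \eqref{g9.17} with the claimed constant.

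I do not anticipate any serious obstacle: both estimates reduce to one integration by parts or one pointwise substitution, followed by routine exponent bookkeeping with $q+q'=qq'$ and $q'-1 = q'/q$. The only minor subtleties are that the boundary term at $T_0$ in the IBP for \eqref{g9.16} has the favourable sign regardless of whether $T_0=\infty$ or $T_0<\infty$, and that the pointwise bound $\tau v\geq \ve V$ extracted from \eqref{g9.15} holds only almost everywhere in $(0,T_0)$, which is nevertheless sufficient for the integral estimates.
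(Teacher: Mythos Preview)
Your proposal is correct and your treatment of \eqref{g9.17} coincides with the paper's: both exploit the differential consequence $v(\tau)\geq \ve\,\tau^{-1}V(\tau)$ of \eqref{g9.15} to replace $w^{-q/q'}$ by $\ve^{-q/q'}V(\tau)\tau^{q/q'}$, then insert $V(\tau)\leq (\tau/t)^\ve V(t)$ and integrate.

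For \eqref{g9.16} your route differs slightly from the paper's. You integrate by parts against $\tau^{\delta q'}$ via the identity $w=-\tfrac{1}{q'-1}\tfrac{\dint}{\dint\tau}V^{1-q'}$, discard the boundary term at $T_0$, and control the residual integral $\tfrac{\delta q'}{q'-1}\int_t^{T_0}V(\tau)^{1-q'}\tau^{\delta q'-1}\dint\tau$ using $V(\tau)\geq(\tau/t)^\ve V(t)$. The paper instead converts the factor $\tau^{\delta q'}$ directly into a power of $V(\tau)$ through the inequality $\tau^{\delta q'}\leq \big(t^\ve V(t)^{-1}V(\tau)\big)^{\delta q'/\ve}$, reducing the integrand to $V(\tau)^{q'(\delta/\ve-1)}v(\tau)$, which integrates explicitly. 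Both arguments use the same monotonicity hypothesis in the same direction and yield the same constant; the paper's substitution is marginally shorter, while your integration by parts makes the role of the boundary term at $T_0$ (and its vanishing when $T_0=\infty$) more transparent.
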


\begin{proof}
  We start proving \eqref{g9.16}. Here we apply \eqref{g7.2} and \eqref{g9.15} to conclude
  \begin{align*}
    \int_t^{T_0} w(\tau)\tau^{\delta q'}\dint \tau  \leq & \left(t^\ve V(t)^{-1}\right)^{\delta q'/\ve} \int_t^{T_0} V(\tau)^{q'(\delta/\ve-1)} v(\tau)\dint\tau \\
    = &\ t^{\delta q'} V(t)^{-\delta q'/\ve} (q' (\delta/\ve-1)+1)^{-1} \left(V(\tau)^{q'(\delta/\ve-1)+1}\right) \Big|^{T_0}_{\tau=t}\\
    \leq & t^{\delta q'} \left(q'\left(\frac1q-\frac{\delta}{\ve}\right)\right)^{-1} V(t)^{1-q'},
    \end{align*}
  which implies \eqref{g9.16}.

  It remains to verify \eqref{g9.17}. Property \eqref{g9.15} yields that
  \[
  v(\tau)=V'(\tau) \geq \ve \tau^{-1} V(\tau),\quad \tau\in (0,T_0).
  \]
  Therefore,
  \begin{align*}
    \int_0^t w(\tau)^{-q/q'} \tau^{-(\delta+1)q} \dint \tau & = \int_0^t V(\tau)^q v(\tau)^{-q/q'} \tau^{-(\delta+1)q} \dint\tau\\
    & \leq \ve^{-q/q'} \int_0^t V(\tau)\tau^{-\delta q-1} \dint\tau.
  \end{align*}
  Now \eqref{g9.15} implies that
  \[
  \int_0^t V(\tau)\tau^{-\delta q-1} \dint\tau \leq V(t)t^{-\ve} \int_0^t \tau^{\ve-\delta q-1} \dint\tau = V(t) t^{-\delta q} (\ve-\delta q)^{-1}.
  \]
These estimates conclude the proof.  
\end{proof}

We formulate an immediate consequence of the estimates \eqref{g9.16} and \eqref{g9.17}.

\begin{corollary}
  \label{cor-g-9.3}
Let the assumptions of Lemma~\ref{lemma-g-9.2} be satisfied. Then
\beq
\left(\int_0^t w(\tau)^{-q/q'}\tau^{-(\delta+1)q}\dint\tau\right)^{1/q} \left(\int_t^{T_0} w(\tau)\tau^{\delta q'}\dint\tau\right)^{1/q'} \leq \frac{(q/q')^{1/q'}}{\ve-\delta q}.
\label{g9.18}
\eeq
\end{corollary}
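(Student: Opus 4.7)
The plan is to obtain \eqref{g9.18} simply by multiplying together the two estimates \eqref{g9.16} and \eqref{g9.17} already proved in Lemma~\ref{lemma-g-9.2}, since both upper bounds are available under exactly the hypotheses assumed in the corollary. No new analytic work is required.

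More precisely, I would first observe that the left-hand side of \eqref{g9.18} is the product of the left-hand sides of \eqref{g9.17} and \eqref{g9.16}. Multiplying the corresponding right-hand sides yields a product of the form
\[
\left(\frac{\ve q}{q'(\ve-\delta q)}\right)^{1/q'} t^\delta V(t)^{-1/q} \cdot \frac{t^{-\delta} V(t)^{1/q}}{\ve^{1/q'}(\ve-\delta q)^{1/q}},
\]
in which the factors $t^\delta\cdot t^{-\delta}$ and $V(t)^{-1/q}\cdot V(t)^{1/q}$ cancel identically, so the estimate becomes independent of $t$, which is exactly the shape of the desired conclusion.

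The remaining step is a purely algebraic simplification of the constant. Pulling $\ve^{1/q'}$ out of the first factor gives $\ve^{1/q'}(q/q')^{1/q'}(\ve-\delta q)^{-1/q'}$, which cancels the $\ve^{1/q'}$ in the denominator of the second factor. What remains is $(q/q')^{1/q'}(\ve-\delta q)^{-1/q'-1/q}$, and the identity $1/q+1/q'=1$ collapses the exponent on $\ve-\delta q$ to $1$, producing the stated bound $(q/q')^{1/q'}/(\ve-\delta q)$.

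There is no genuine obstacle here; the only thing to be careful about is that the condition $\delta\in[0,\ve/q)$ guarantees $\ve-\delta q>0$ (so the denominator is positive) and that the hypothesis \eqref{g9.15} is used only through Lemma~\ref{lemma-g-9.2}. Thus the corollary follows at once, and no separate verification of convergence is needed beyond what was already established in the proof of that lemma.
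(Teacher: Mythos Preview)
Your proposal is correct and matches the paper's approach exactly: the paper itself introduces the corollary as ``an immediate consequence of the estimates \eqref{g9.16} and \eqref{g9.17}'' and gives no further argument, so your explicit multiplication of the two bounds and the algebraic simplification of the resulting constant is precisely what is intended.
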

  
\begin{lemma}
  \label{lemma-g-9.4}
  Let the assumptions of Lemma~\ref{lemma-g-9.1} be satisfied, and assume the conditions \eqref{g8.10} and \eqref{g9.15} to hold. Then there is the equivalence
  \beq
  \varrho_0(g) \simeq \wt{\varrho}_0(g),\quad g\in M^+(0,T),
  \label{g9.19}
  \eeq
where  the norm $\varrho_0$ is given by \eqref{g7.11} and $\wt{\varrho}_0(g)$ by \eqref{g9.2}.  
\end{lemma}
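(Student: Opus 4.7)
The plan is to invoke Lemma~\ref{lemma-g-9.1}, which yields $\varrho_0(g) \simeq \wt{\varrho}_0(g) + \varrho_1(g) + \varrho_2(g)$, and to reduce \eqref{g9.19} to the two one-sided estimates $\varrho_1(g) \lesssim \wt{\varrho}_0(g)$ and $\varrho_2(g) \lesssim \wt{\varrho}_0(g)$; the opposite bound $\wt{\varrho}_0 \leq \varrho_0$ is built into the decomposition. As a first step, condition \eqref{g8.10} inserted in \eqref{g8.4} dominates the second summand there by a constant times the first, so that, uniformly in $t \in (\xi, T]$,
\[
\Phi_k(\xi, t) \simeq A(\xi), \qquad A(\xi) := \int_0^\xi \varphi(\tau)\dint\tau .
\]
Setting $B(\tau) := \int_\tau^T g(s)\dint s$ and $U(t) := \int_0^t A(\tau) g(\tau)\dint\tau$, formulas \eqref{g9.3} and \eqref{g9.4} simplify to
\[
\varrho_1(g) \simeq \left(\int_0^T U(t)^{q'} w(t)\dint t\right)^{1/q'}, \qquad \varrho_2(g) \simeq U(T)\left(\int_T^\infty w(t)\dint t\right)^{1/q'},
\]
while $\wt{\varrho}_0(g)$ is the $L_{q'}(w)$-norm of $A \cdot B$.

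The second step is an integration by parts: using $A(0) = 0$ and $B'(\tau) = -g(\tau)$ one obtains
\[
U(t) = -A(t) B(t) + \int_0^t \varphi(\tau) B(\tau)\dint\tau \leq \int_0^t \varphi(\tau) B(\tau)\dint\tau, \qquad t \in (0, T],
\]
which recasts $\varrho_1$ as a classical weighted Hardy operator applied to the positive function $\varphi \cdot B$.

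Now I apply the weighted Hardy inequality in $L_{q'}(w)$ with auxiliary weight $\tilde w(\tau) = \tau^{q'} w(\tau)$, namely
\[
\left(\int_0^T \left(\int_0^t f(\tau)\dint\tau\right)^{q'} w(t)\dint t\right)^{1/q'} \leq c \left(\int_0^T f(t)^{q'} t^{q'} w(t)\dint t\right)^{1/q'} .
\]
Its Muckenhoupt-type sufficient condition is
\[
\sup_{t \in (0,T)} \left(\int_t^T w(\tau)\dint\tau\right)^{1/q'} \left(\int_0^t \tau^{-q} w(\tau)^{-q/q'}\dint\tau\right)^{1/q} < \infty,
\]
which is exactly Corollary~\ref{cor-g-9.3} with $\delta = 0$ and $T_0 = T$, made available by \eqref{g9.15}. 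Choosing $f = \varphi B$ and invoking the elementary bound $t\varphi(t) \leq A(t)$ (immediate from the monotone decrease of $\varphi$) converts the right-hand side into a constant multiple of $\wt{\varrho}_0(g)$, yielding $\varrho_1(g) \lesssim \wt{\varrho}_0(g)$.

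For $\varrho_2$, the same integration-by-parts identity at $t = T$ together with $B(T) = 0$ gives $U(T) = \int_0^T \varphi(\tau) B(\tau)\dint\tau$. Writing $\varphi B = (\tau \varphi B) \cdot \tau^{-1}$ and applying H\"older with exponents $q', q$ against $w(\tau)\dint\tau$ leads to
\[
U(T) \leq \left(\int_0^T (\tau \varphi(\tau) B(\tau))^{q'} w(\tau)\dint\tau\right)^{1/q'} \left(\int_0^T \tau^{-q} w(\tau)^{-q/q'}\dint\tau\right)^{1/q} .
\]
Estimate \eqref{g9.17} with $\delta = 0$ (taking $t \to T^-$) controls the second factor by $c\,V(T)^{1/q}$, whereas integrating $w = V^{-q'} v$ directly over $[T, \infty)$ yields $(\int_T^\infty w)^{1/q'} \leq c\,V(T)^{-1/q}$. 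Multiplying, the powers of $V(T)$ cancel and a final application of $t\varphi(t) \leq A(t)$ identifies the surviving integral with $\wt{\varrho}_0(g)$, giving $\varrho_2(g) \lesssim \wt{\varrho}_0(g)$. The main delicate step is the reduction of the Hardy inequality to the Muckenhoupt condition supplied by Corollary~\ref{cor-g-9.3}, for which Lemma~\ref{lemma-g-9.2} was tailor-made; everything else is integration by parts, H\"older, and the trivial monotonicity of $\varphi$.
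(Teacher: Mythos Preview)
Your proof is correct and follows essentially the same route as the paper's: the decomposition from Lemma~\ref{lemma-g-9.1}, the simplification $\Phi_k(\xi,t)\simeq\int_0^\xi\varphi$ via \eqref{g8.10}, the Fubini/integration-by-parts identity $U(t)=\int_0^t\varphi(\tau)B(\tau)\dint\tau-A(t)B(t)$, the weighted Hardy inequality with Muckenhoupt condition supplied by Corollary~\ref{cor-g-9.3} ($\delta=0$), and the H\"older argument for $\varrho_2$ are all exactly what the paper does. The only cosmetic differences are that the paper phrases the Fubini step as a change of order of integration rather than integration by parts, and for $\varrho_2$ it invokes \eqref{g9.18} with $T_0=\infty$, $t=T$ directly, whereas you split this into \eqref{g9.17} for the $\tau^{-q}w^{-q/q'}$ factor and the elementary bound $\int_T^\infty w\leq (q'-1)^{-1}V(T)^{1-q'}$ for the other; the two are equivalent.
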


\begin{proof}
  \underline{\em Step 1}.~   Lemma~\ref{lemma-g-9.1} implies the estimates \eqref{g9.1}--\eqref{g9.4}. Next we apply \eqref{g8.10} to $\Phi_k(\xi,t)$, given by \eqref{g8.4}, that
  \[
  \Phi_k(\xi,t)\simeq \int_0^\xi \varphi(\tau)\dint\tau,\quad t\in (0,T],
    \]
    and thus
 \begin{align*}
   \varrho_1(g) = & \left(\int_0^T \left(\int_0^t \left(\int_0^\xi\varphi(\tau)\dint\tau\right)g(\xi)\dint\xi\right)^{q'} w(t)\dint t\right)^{1/q'}\\
   = & \left(\int_0^T   \left(\int_0^t\varphi(\tau)
\left(\int_\tau^t g(\xi)\dint\xi\right)\dint\tau\right)^{q'} w(t)\dint t\right)^{1/q'}.
 \end{align*}
 Hence,
 \beq
 \varrho_1(g) \leq  c  \left(\int_0^T   \left(\int_0^t\varphi(\tau)
\left(\int_\tau^T g(\xi)\dint\xi\right)\dint\tau\right)^{q'} w(t)\dint t\right)^{1/q'}.
 \label{g9.20}
 \eeq
 Similarly,
 \beq
 \varrho_2(g) \simeq  \left(\int_0^T\varphi(\tau)\left(\int_\tau^T g(\xi)\dint\xi\right)\dint\tau\right)
 \left(\int_T^\infty  w(t)\dint t\right)^{1/q'}.
 \label{g9.21}
 \eeq
 Therefore it is enough to prove that
 \begin{align}
   \varrho_1(g) & \leq \ c_1\ \wt{\varrho}_0(g), \label{g9.22}\\
   \varrho_2(g) & \leq \ c_2 \ \wt{\varrho}_0(g), \label{g9.23}
   \end{align}
 where $c_1,c_2$ are positive constants independent of $g\in M^+(0,T)$. \\

 \underline{\em Step 2}.~ We verify \eqref{g9.22}. We apply Hardy's inequality \cite[Thm.~2, p.~41]{mazya} in adapted notation, that is,
 \begin{align}
 \Big(\int_0^T   &\left(\int_0^t\varphi(\tau)
  \left(\int_\tau^T g(\xi)\dint\xi\right)\dint\tau\right)^{q'} w(t)\dint t\Big)^{1/q'}\nonumber\\
 & \leq c_3 \left(\int_0^T \left(t \varphi(t) \int_t^T g(\xi)\dint\xi\right)^{q'} w(t)\dint t\right)^{1/q'}
   \label{g9.24}
\end{align}
 if, and only if,
 \beq
 B_0=\sup_{t\in (0,T)} \left(\int_t^T w(\tau)\dint\tau\right)^{1/q'} \left(\int_0^t w(\tau)^{-q/q'} \tau^{-q} \dint\tau\right)^{1/q}<\infty.
 \label{g9.25}
 \eeq
 Moreover, for the best possible constant $c_3$ in \eqref{g9.24} we have
 \beq
 B_0\leq c_3\leq B_0\left(\frac{q'}{q'-1}\right)^{\frac{q'-1}{q'}} (q')^\frac{1}{q'} = B_0 q^\frac1q (q')^\frac{1}{q'}.
 \label{g9.26}
 \eeq
 Corollary~\ref{cor-g-9.3} with $\delta=0$, $T_0=T$, implies that
 \beq
 B_0\leq \ve^{-1} (q'-1)^{-1/q'},
 \label{g9.27}
 \eeq
 and consequently
\beq
c_3\leq \frac{q}{\ve}.
\label{g9.28}
 \eeq
Now \eqref{g9.22} follows from \eqref{g9.20}, \eqref{g9.24}, \eqref{g9.2} and from the obvious estimate $t\varphi(t) \leq \int_0^t \varphi(\tau)\dint\tau$ due to the monotonicity of $\varphi$. \\

\underline{\em Step 3}.~ It remains to verify \eqref{g9.23} which is much simpler. By H\"older's inequality we get from \eqref{g9.21} that
\begin{align*}
  \varrho_2(g) \leq & c \left(\int_0^T\tau \varphi(\tau)\left(\int_\tau^T g(\xi)\dint\xi\right)^{q'} w(\tau)\dint\tau\right)^{1/q'} \times   \\
&\quad \times  \left(\int_0^T \tau^{-q} w(\tau)^{-q/q'} \dint\tau\right)^{1/q} 
  \left(\int_T^\infty  w(t)\dint t\right)^{1/q'}\\
  \leq & c\ \wt{\varrho}_0(g) \left(\int_0^T \tau^{-q} w(\tau)^{-q/q'} \dint\tau\right)^{1/q} 
  \left(\int_T^\infty  w(t)\dint t\right)^{1/q'}.
\end{align*}
Now application of \eqref{g9.18} with $T_0=\infty$, $\delta=0$, $t=T$ gives
\[
\varrho_2(g)\leq c_4 \frac1\ve \left(\frac {q}{q'}\right)^{1/q'} \wt{\varrho}_0(g).\]
Thus we have finally shown \eqref{g9.22} and \eqref{g9.23} which together with \eqref{g9.1} imply \eqref{g9.19}.
\end{proof}

The last preparatory lemma we need is the following.

\begin{lemma}
  \label{lemma-g-9.5}
  Let the assumption of Lemma~\ref{lemma-g-9.1} be satisfied. If the estimate \eqref{g8.13} holds, and for the function $U_q$, given by \eqref{g3.13} for $q>1$, there is some $\ve>0$ such that
  \beq
  t^\ve U_q(t)\quad \text{decreases on}\quad (0,T),
  \label{g9.29}
  \eeq
  then the assertion \eqref{g9.19} holds with $\wt{\varrho}_0(g)$ given by \eqref{g9.2}. All the constants appearing in \eqref{g9.19} are positive, finite, and independent of $g\in M^+(0,T)$.
  \end{lemma}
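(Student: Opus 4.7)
The plan is to combine the decomposition of Lemma~\ref{lemma-g-9.1} with a dyadic discretisation at the level of $U_q$, in strict analogy with the proof of Lemma~\ref{lemma-g-8.9}, but working inside the $L_{q'}(w)$-type norm of the associated RIS rather than a supremum. From Lemma~\ref{lemma-g-9.1} one has $\varrho_0(g) \simeq \widetilde{\varrho}_0(g) + \varrho_1(g) + \varrho_2(g)$; the inequality $\widetilde{\varrho}_0(g)\le \varrho_0(g)$ is trivial, so the task reduces to proving $\varrho_j(g) \lesssim \widetilde{\varrho}_0(g)$ for $j=1,2$.

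First I would use assumption \eqref{g8.13} together with the derived equivalences \eqref{g8.17} and \eqref{g8.18} to replace $\Phi_k(\xi,t)$ by $\xi^{k/n}t^{1-k/n}\varphi(t)$ and $\int_0^t\varphi\dint\tau$ by $t\varphi(t)$ inside \eqref{g9.2}--\eqref{g9.4}. Recalling that $\widetilde{W}(t)=V(t)^{-1}t^{1-k/n}\varphi(t)$ and $w=V^{-q'}v$, this converts the three ingredients into
\begin{align*}
\varrho_1(g)^{q'} &\simeq \int_0^T \widetilde{W}(t)^{q'}\Big(\int_0^t \xi^{k/n}g(\xi)\dint\xi\Big)^{q'}v(t)\dint t,\\
\widetilde{\varrho}_0(g)^{q'} &\simeq \int_0^T t^{kq'/n}\widetilde{W}(t)^{q'}\Big(\int_t^T g(s)\dint s\Big)^{q'}v(t)\dint t,\\
\varrho_2(g) &\simeq T^{1-k/n}\varphi(T)\Big(\int_0^T \xi^{k/n}g(\xi)\dint\xi\Big)\Big(\int_T^\infty w(t)\dint t\Big)^{1/q'}.
\end{align*}

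Second, I would normalise so that $U_q(T)=1$ and introduce, exactly as in \eqref{g8.24}, the dyadic sequence $\nu_m=\sup\{t\in(0,T]:U_q(t)=2^m\}$, $m\in\no$. Since $U_q$ is decreasing, the decay assumption \eqref{g9.29} forces $\nu_{m+1}\simeq\nu_m$ (cf.\ \eqref{g8.26}), and by the definition of $U_q$ one has $U_q(t)\simeq 2^m$ and $\int_{\Delta_m}\widetilde{W}^{q'}v\dint\tau = (2^{q'}-1)\,2^{mq'}$ on $\Delta_m=(\nu_{m+1},\nu_m]$. Discretising the integral defining $\varrho_1(g)^{q'}$ along $\{\Delta_m\}$, bounding $\int_0^t \xi^{k/n}g\dint\xi \le \sum_{j\ge m}\int_{\Delta_j}\xi^{k/n}g\dint\xi$ for $t\in\Delta_m$, and then applying the discrete Hardy-type inequality \eqref{g8.30} with $\beta_m=2^m$, $r=1$, $p=q'$ (so that $\beta_{m+1}/\beta_m=2>1$ is automatic), I expect to reach
$$\varrho_1(g)^{q'}\ \lesssim\ \sum_{m\in\no} 2^{mq'}\nu_m^{kq'/n}\Big(\int_{\Delta_m} g\dint\xi\Big)^{q'}.$$
A parallel but simpler argument, restricting the outer integral in $\widetilde{\varrho}_0(g)^{q'}$ to $\Delta_m$, using $\int_t^T g\dint s\ge\int_{\Delta_{m-1}}g\dint s$ for $t\in\Delta_m$, the identity $t^{kq'/n}\widetilde{W}(t)^{q'}v(t)=(t\varphi(t))^{q'}w(t)$, and an index shift rendered harmless by $\nu_{m+1}\simeq\nu_m$, produces a matching lower bound for $\widetilde{\varrho}_0(g)^{q'}$ and hence $\varrho_1(g)\lesssim\widetilde{\varrho}_0(g)$.

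Third, the remaining term $\varrho_2(g)$ is handled by a H\"older argument close in spirit to Step~3 of the proof of Lemma~\ref{lemma-g-9.4}. After the discretisation $\int_0^T \xi^{k/n}g\dint\xi\simeq\sum_m\nu_m^{k/n}\int_{\Delta_m}g\dint\xi$, a H\"older split on sums with exponents $q$ and $q'$ produces the convergent factor $\sum_m 2^{-mq}$; the other factor is precisely the atomic expression already controlled from below by $\widetilde{\varrho}_0(g)$ in the preceding step. The residual constant $T^{1-k/n}\varphi(T)(\int_T^\infty w\dint t)^{1/q'}$ is finite by \eqref{g1.3} and \eqref{g7.3}. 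The main obstacle is the first displayed dyadic estimate for $\varrho_1$: one must discretise two very different-looking weighted integrals on the \emph{same} decomposition $\{\Delta_m\}$ and verify that each reduces, up to constants independent of $g$, to the identical atomic expression $\sum_m 2^{mq'}\nu_m^{kq'/n}(\int_{\Delta_m}g\dint\xi)^{q'}$; this is precisely what makes \eqref{g8.30}---rather than a trivial supremum bound as in Lemma~\ref{lemma-g-8.9}---the right tool in the present setting $q>1$.
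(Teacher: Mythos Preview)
Your overall strategy---reduce $\Phi_k$ via \eqref{g8.17}--\eqref{g8.18}, discretise at the level of $U_q$, invoke \eqref{g8.30} with $p=q'$, and match the resulting atomic sum against a discretised lower bound for $\widetilde\varrho_0$---is exactly the paper's. But two points in your execution would fail as written.

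First, you cannot ``normalise so that $U_q(T)=1$''. For $q>1$ the definition \eqref{g3.13} gives $U_q(T)=\bigl(\int_T^T\widetilde W^{q'}v\,\dint\tau\bigr)^{1/q'}=0$, in contrast to the case $q=1$ where $U_1(T)=\widetilde W(T)>0$. Consequently the $\no$-indexed sequence $\nu_m=\sup\{t:U_q(t)=2^m\}$ starts at some $\nu_0<T$ and the intervals $\Delta_m$ do \emph{not} exhaust $(0,T)$; the piece $(\nu_0,T]$ is missing from your discretisation of $\varrho_1(g)^{q'}$. The paper handles this by indexing the dyadic points over $\ganz$ rather than $\no$: since $U_q(0+)=\infty$ and $U_q(T)=0$, one sets $\delta_m=\sup\{t:U_q(t)=2^m\}$ for $m\in\ganz$, so that $\delta_m\to 0$ as $m\to+\infty$ and $\delta_m\to T$ as $m\to-\infty$, and the blocks $\widetilde\Delta_m=(\delta_{m+1},\delta_m]$ partition $(0,T)$.

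Second, and relatedly, your H\"older argument for $\varrho_2(g)$ breaks once the discretisation runs over $\ganz$: the factor $\sum_{m\in\ganz}2^{-mq}$ diverges. The fix (which the paper carries out) is to split $\int_0^T\xi^{k/n}g\,\dint\xi$ into the piece over $(0,\delta_0]$, where H\"older with $\sum_{m\ge0}2^{-mq}<\infty$ works exactly as you describe, and the tail $\int_{\delta_0}^T g\,\dint\xi$. That tail is bounded separately by $\widetilde\varrho_0(g)$ via the elementary estimate
\[
\widetilde\varrho_0(g)\ \ge\ \Bigl(\int_{\delta_0}^T g\,\dint\xi\Bigr)\Bigl(\int_0^{\delta_0}(t\varphi(t))^{q'}w(t)\,\dint t\Bigr)^{1/q'},
\]
the second factor being a positive finite constant. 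With these two adjustments your argument goes through and coincides with the paper's proof.
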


\begin{proof}
  We strengthen a similar line of arguments like in the proof of Lemma~\ref{lemma-g-8.9} and use the method of discretization again.\\

  \underline{\em Step 1.}~ According to \eqref{g8.4} and \eqref{g8.17}-\eqref{g8.18} we have
  \beq
  \Phi_k(\xi,t)\simeq \xi^{k/n} t^{1-k/n}\varphi(t).
  \label{g9.30}
  \eeq
  Hence \eqref{g9.1}-\eqref{g9.4} read as
  \begin{align}
    \varrho_1(g)\simeq & \left(\int_0^T \left(\left(\int_0^t \xi^{k/n} g(\xi)\dint\xi\right) t^{1-k/n} \varphi(t)\right)^{q'} w(t)\dint t\right)^{1/q'}    \label{g9.31}\\
    \varrho_2(g)\simeq & \wt{\varrho}_2(g) = \left(\int_0^T \xi^{k/n} g(\xi)\dint\xi\right) T^{1-k/n} \varphi(T) \left(\int_T^\infty w(t)\dint t\right)^{1/q'}.
    \label{g9.32}
  \end{align}

  \underline{\em Step 2.}~ First we estimate $\varrho_1(g)$. We substitute formulas \eqref{g7.2} and \eqref{g3.12} into \eqref{g9.31} and obtain
  \beq
  \varrho_1(g)\simeq \wt{\varrho}_1(g) = \left(\int_0^T \left(\int_0^t \xi^{k/n} g(\xi)\dint\xi\right)^{q'} \wt{W}(t)^{q'} v(t)\dint t\right)^{1/q'}.
  \label{g9.33}
  \eeq
  Note that $U_q(T)=0$, $U_q(0+)=\infty$, and $U_q(t) t^\ve $ is monotonically decreasing. We introduce the discretizing sequence $\{\delta_m\}_{m\in\ganz}$ by
  \beq
\delta_m=\sup\{\tau\in (0,T): U_q(\tau)=2^m\},\quad m\in\ganz.
  \label{g9.34}
    \eeq
    Thus we observe $\delta_0\in (0,T)$, $\delta_m\to 0$ for $m\to +\infty$, $\delta_m\to T$ for $m\to-\infty$, and
    \beq
\delta_{m+1}<\delta_m\leq \delta_{m+1} 2^{1/\ve},\quad m\in\ganz.
    \label{g9.35}
    \eeq
    We use the notation
    \beq
    \wt{\Delta}_m = (\delta_{m+1},\delta_m], \quad m\in\ganz.
    \label{g9.36}
    \eeq
    Therefore,
    \[
    \wt{\varrho}_1(g)^{q'} = \sum_ {m\in\ganz} \int_{\wt{\Delta}_m} \left(\int_0^t \xi^{k/n} g(\xi)\dint\xi\right)^{q'} \wt{W}(t)^{q'} v(t)\dint t.
    \]
    In view of \eqref{g9.34} this can be continued by
    \begin{align*}
      \wt{\varrho}_1(g)^{q'} \leq &  \sum_ {m\in\ganz}\left( \int_0^{\delta_m}\xi^{k/n} g(\xi)\dint\xi\right)^{q'}
      \int_{\wt{\Delta}_m}  \wt{W}(t)^{q'} v(t)\dint t\\
      = & \sum_ {m\in\ganz}\left( \int_0^{\delta_m}\xi^{k/n} g(\xi)\dint\xi\right)^{q'} \left(U_q(\delta_{m+1})^{q'} - U_q(\delta_m)^{q'}\right)\\
      = & \ (2^{q'}-1) \sum_{m\in\ganz} 2^{mq'} \left(\sum_{j\geq m}\int_{\wt{\Delta}_j }\xi^{k/n} g(\xi)\dint\xi\right)^{q'} .          
    \end{align*}
    Now we apply in appropriately adapted notation estimate \eqref{g8.30} again and obtain
    \[
     \wt{\varrho}_1(g)\leq c(q') \left(\sum_{m\in\ganz} 2^{mq'} \left(\int_{\wt{\Delta}_m }\xi^{k/n} g(\xi)\dint\xi\right)^{q'}\right)^{1/q'}.
     \]
     Using \eqref{g9.35} and \eqref{g9.36} we observe $ \xi \simeq \delta_m$, $\xi\in \wt{\Delta}_m$, $m\in\ganz$, such that finally
     \beq
     \wt{\varrho}_1(g)\leq c_1(q,\ve) \left(\sum_{m\in\ganz} 2^{mq'} \delta_m^{kq'/n} \left(\int_{\wt{\Delta}_m } g(\xi)\dint\xi\right)^{q'}\right)^{1/q'}.
     \label{g9.37}
     \eeq

     \underline{\em Step 3.}~ We deal with $\varrho_2(g)$ and \eqref{g9.32}. We apply \eqref{g9.16} with $\delta=0$ and obtain
     \beq
\wt{\varrho}_2(g)\leq c_2(q)\left(\int_0^T \xi^{k/n} g(\xi)\dint\xi\right) T^{1-k/n} \varphi(T) V(T)^{-1/q}.
     \label{g9.38}
     \eeq
     Moreover,
     \[
     \int_0^{\delta_0} \xi^{k/n}g(\xi)\dint\xi = \sum_{m\in\no} \int_{\wt{\Delta}_m} \xi^{k/n}g(\xi)\dint\xi \simeq \sum_{m\in\no}\delta_m^{k/n}\int_{\wt{\Delta}_m} g(\xi)\dint\xi,
     \]
such that H\"older's inequality leads to
\[
\int_0^{\delta_0} \xi^{k/n}g(\xi)\dint\xi\leq c\left( \sum_{m\in\no} 2^{-mq}\right)^{\frac1q}\left(\sum_{m\in\no}2^{mq'} 
\delta_m^{q'k/n}\left(\int_{\wt{\Delta}_m} g(\xi)\dint\xi\right)^{q'}\right)^{\frac{1}{q'}},
 \]
 such that
 \[
\int_0^T \xi^{k/n}g(\xi)\dint\xi\leq c_3(q,\ve)\left(\left(\sum_{m\in\no}2^{mq'} 
\delta_m^{q'k/n}\left(\int_{\wt{\Delta}_m} g(\xi)\dint\xi\right)^{q'}\right)^{\frac{1}{q'}}+T^{k/n}\int_{\delta_0}^T g(\xi)\dint\xi\right).
 \]
 Together with \eqref{g9.37}, \eqref{g9.38} this leads to
 \beq
 \wt{\varrho}_1(g)+\wt{\varrho}_2(g)\leq c_4(q,\ve,T)\left(\left(\sum_{m\in\no}2^{mq'} 
 \delta_m^{q'k/n}\left(\int_{\wt{\Delta}_m} g(\xi)\dint\xi\right)^{q'}\right)^{\frac{1}{q'}}+\int_{\delta_0}^T g(\xi)\dint\xi\right).
 \label{g9.39}
 \eeq

 \underline{\em Step 4.}~ We estimate $\wt{\varrho}_0(g)$ in \eqref{g9.2} from below. First of all,
 \begin{align*}
   \wt{\varrho}_0(g)\geq & \left(\int_0^{\delta_0}\left(\left(\int_0^t \varphi(\tau)\dint\tau\right)\left(\int_t^T g(\xi)\dint\xi\right)\right)^{q'} w(t)\dint t\right)^{\frac{1}{q'}} \\
   \geq &\left(\int_{\delta_0}^T g(\xi)\dint\xi\right)\left(\int_0^{\delta_0} \left(t\varphi(t)\right)^{q'} w(t)\dint t\right)^{\frac{1}{q'}},
 \end{align*}
 hence
 \beq
 \int_{\delta_0}^T g(\xi)\dint\xi \leq c(\delta_0,q) \wt{\varrho}_0(g),\quad g\in M^+(0,T).
 \label{g9.40}
 \eeq
 Furthermore, according to \eqref{g9.34}-\eqref{g9.35},
 \begin{align}
   \wt{\varrho}_0(g)^{q'} &= \sum_{m\in\ganz} \int_{\wt{\Delta}_m} \left(\left(\int_0^t \varphi(\tau)\dint\tau\right)\left(\int_t^T g(\xi)\dint\xi\right)\right)^{q'} w(t)\dint t\nonumber\\
   &\geq \sum_{m\in\ganz} \left(\int_{\wt{\Delta}_{m-1}} g(\xi)\dint\xi\right)^{q'} \int_{\wt{\Delta}_m} \left(t\varphi(t)\right)^{q'} w(t)\dint t\nonumber\\
   & \simeq \sum_{m\in \ganz} \left(\int_{\wt{\Delta}_{m-1}} g(\xi)\dint\xi\right)^{q'} \delta_{m-1}^{kq'/n} \int_{\wt{\Delta}_m} \wt{W}(t)^{q'} v(t)\dint t.
   \label{g9.41}
      \end{align}
 For $t\in\wt{\Delta}_m$ we have $t\leq\delta_m<\delta_{m-1}<T$, such that $\int_t^T g(\xi)\dint\xi\geq \int_{\wt{\Delta}_{m-1}} g(\xi)\dint\xi$. In addition, we know that $t\simeq t^{1-/k/n} \delta_{m-1}^{k/n}$ and $\int_0^t\varphi(\tau)\dint\tau\geq t\varphi(t)$, recall also notation \eqref{g3.13} and \eqref{g7.2}.

 Now by similar arguments as presented in the proof above following \eqref{g9.36} we obtain
 \[
 \int_{\wt{\Delta}_m} \wt{W}(t)^{q'} v(t)\dint t = (2^{q'}-1) 2^{mq'} = 2^{q'}(2^{q'}-1) 2^{(m-1)q'}.
 \]
 Substituting this into \eqref{g9.41} and an index shift  lead to
 \beq
 \wt{\varrho}_0(g)\geq c(\ve,q)\left(\sum_{m\in\ganz} 2^{mq'} \delta_m^{kq'/n} \left(\int_{\wt{\Delta}_m} g(\xi)\dint\xi\right)^{q'}\right)^{1/q'}.
 \label{g9.42}
 \eeq
 Thus the estimates \eqref{g9.39}, \eqref {g9.40} and \eqref{g9.42} result in
 \[
 \wt{\varrho}_1(g) + \wt{\varrho}_2(g) \leq c_5(\ve,\delta_0,q,T)\wt{\varrho}_0(g),\quad g\in M^+(0,T).
 \]
Then the last estimate, together with \eqref{g9.33}, \eqref{g9.32} and \eqref{g9.1} yields \eqref{g9.19}.
\end{proof}

Recall that our aim is to prove Theorem~\ref{theo-g-3.3} above. For that reason we summarize our preceding results in the following theorem.

\begin{theorem}
\label{theo-g-9.6}
Let the conditions of Theorem,~\ref{theo-g-3.3} be  satisfied. Then the associated GBFS $X_0'=X_0'(0,T)$ to the optimal space $X_0=X_0(0,T)$ is generated by the function norm $\varrho_0$ such that
\beq
\varrho_0(g)\simeq \wt{\varrho}_0(g),\quad g\in M^+(0,T),
\label{g9.43}
\eeq
where for $q=1$,
\beq
\wt{\varrho}_0(g)=\sup_{t\in (0,T]} \left(V(t)^{-1} \left(\int_0^t \varphi(\tau)\dint\tau\right)\left(\int_t^T g(s)\dint s\right)\right),
\label{g9.44}
\eeq
and for $1<q<\infty,$  
\beq
\wt{\varrho}_0(g)=\left(\int_0^T\left(\left(\int_0^t \varphi(\tau)\dint\tau\right)\left(\int_t^T g(s)\dint s\right)\right)^{q'}w(t)\dint t\right)^{1/q'},
\label{g9.45}
\eeq
where $\frac{1}{q}+\frac{1}{q'}=1$, as usual. Here again
\beq
V(t)=\int_0^t v(\tau)\dint\tau,\qquad w(t)=V(t)^{-q'} v(t).
\label{g9.46}
\eeq
\end{theorem}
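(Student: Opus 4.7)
The proof is essentially a collection and assembly of the preparatory results in Sections~\ref{sect-g-7}, \ref{sect-g-8}, and \ref{sect-g-9}, which do all the real work. My plan is to first invoke the general framework of Section~\ref{sect-g-7}, and then dispatch the four cases (two values of $q$, two alternative assumptions (A) and (B)) by citing the corresponding lemma each time.

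First, I would check that the abstract hypotheses of \cite[Thm.~3.1, Rem.~3.2, Ex.~3.3]{BGZ} are satisfied in our setting. This was already done in Section~\ref{sect-g-7}: the boundedness \eqref{g7.8} follows from the assumption \eqref{g1.5}, which is in turn equivalent to $\Psi_q(T)<\infty$ by Lemma~\ref{lemma-g-3.1}, and the positivity \eqref{g7.9} is verified with $\sigma_0\equiv\chi_{(0,T)}$. Consequently the associated norm to the optimal GBFS $X_0$ for the embedding $K\mapsto X$ is exactly the functional $\varrho_0$ defined in \eqref{g7.11}. Thus it remains to establish the equivalence $\varrho_0(g)\simeq\wt{\varrho}_0(g)$ in each of the four cases.

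Next I would split the argument by the value of $q$. For $q=1$ the starting point is the decomposition $\varrho_0(g)\simeq \wt{\varrho}_0(g)+\varrho_1(g)$ from Lemma~\ref{lemma-g-8.1}. Under assumption (A), condition \eqref{g3.8} is precisely \eqref{g8.10}, and condition \eqref{g3.9} together with the assumptions on $v$ gives \eqref{g8.21}; then Corollary~\ref{cor-g-8.8} yields the required equivalence with $\wt{\varrho}_0$ as in \eqref{g9.44}. Under assumption (B), condition \eqref{g3.10} is precisely \eqref{g8.13}, and the monotonicity \eqref{g3.11} is exactly the hypothesis of Lemma~\ref{lemma-g-8.9}, which again delivers $\varrho_0\simeq\wt{\varrho}_0$.

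For $1<q<\infty$ the starting point is the three-term decomposition $\varrho_0(g)\simeq\wt{\varrho}_0(g)+\varrho_1(g)+\varrho_2(g)$ from Lemma~\ref{lemma-g-9.1}, with $\wt{\varrho}_0$ now given by \eqref{g9.45}. Under assumption (A) the estimates \eqref{g8.10} and \eqref{g9.15} (the latter being a reformulation of \eqref{g3.9}) allow one to invoke Lemma~\ref{lemma-g-9.4} directly; under assumption (B) the estimates \eqref{g8.13} and \eqref{g9.29} (the latter being \eqref{g3.11}) allow one to invoke Lemma~\ref{lemma-g-9.5}. In both subcases the conclusion is $\varrho_0(g)\simeq\wt{\varrho}_0(g)$ on $M^+(0,T)$, with constants independent of $g$.

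The proof is therefore a routine bookkeeping argument once the reductions in Sections~\ref{sect-g-8}, \ref{sect-g-9} are in hand, and no genuine obstacle remains. The only point requiring a little care is the translation of conditions (A)/(B) from \eqref{g3.8}--\eqref{g3.11} into the form used by the lemmas (\eqref{g8.10}, \eqref{g8.13}, \eqref{g8.21}, \eqref{g9.15}, \eqref{g9.29}), which I would spell out briefly so that the reader can match the hypotheses. Once this identification is made, the four cases are each closed by a single citation, and \eqref{g9.43}--\eqref{g9.46} follow immediately from the chosen expressions for $\wt{\varrho}_0$.
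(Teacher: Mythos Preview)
Your proposal is correct and follows essentially the same approach as the paper: invoke Section~\ref{sect-g-7} to identify $\varrho_0$ as the associated norm, then dispatch the four cases $(q=1$ or $q>1)\times(\text{(A) or (B)})$ by citing Corollary~\ref{cor-g-8.8}, Lemma~\ref{lemma-g-8.9}, Lemma~\ref{lemma-g-9.4}, and Lemma~\ref{lemma-g-9.5} respectively, after matching the hypotheses \eqref{g3.8}--\eqref{g3.11} to \eqref{g8.10}, \eqref{g8.13}, \eqref{g8.21}, \eqref{g9.15}, \eqref{g9.29}. The paper's own proof is precisely this bookkeeping, with the same citations in the same order.
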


\begin{proof}
  First we assume that condition  {\bfseries (A)} (see \eqref{g3.8} and \eqref{g3.9}) is satisfied. Then for $q=1$  we can apply Corollary~\ref{cor-g-8.8}, and find that \eqref{g8.10} coincides with \eqref{g3.8}, and \eqref{g3.9} implies \eqref{g8.21} (as in the last estimate before Lemma~\ref{lemma-g-9.4} with $\delta=0$). Thus \eqref{g9.43} is just \eqref{g8.23} in this case. If $1<q<\infty$, we receive \eqref{g9.19} by Lemma~\ref{lemma-g-9.4}.\\
  Secondly we consider the situation when {\bfseries (B)} holds, that is, \eqref{g3.10} and \eqref{g3.11}. For $q=1$ we can apply Lemma~\ref{lemma-g-8.9}, since \eqref{g8.13} coincides with \eqref{g3.10} and \eqref{g3.11} provides the required property of $U_1$. This yields \eqref{g8.23} which coincides with \eqref{g9.43}. Finally, if $1<q<\infty$, then \eqref{g9.43} is a consequence of Lemma~ \ref{lemma-g-9.5}.
\end{proof}

\section{The description of the optimal Calder\'on space, II}
\label{sect-3b}
Recall that we already stated in Section~\ref{sect-3} above one of our main results, Theorem~\ref{theo-g-3.3}. Now we are ready to present its proof.

\begin{proof}[Proof of Theorem~\ref{theo-g-3.3}]
  Theorem~\ref{theo-g-9.6} above shows that (under the given assumptions) the associated norm is optimal. So what is left to verify  are  the explicit representations for the optimal norm $\|\cdot\|_{X_0}$ in \eqref{g3.17} and \eqref{g3.18}, respectively, with \eqref{g3.16}. This norm is associated to the norm $\wt{\varrho}_0$ presented in \eqref{g9.44}, \eqref{g9.45}. We benefit from the paper \cite{BaGo}  and an application of \cite[Thm.~1.2]{BaGo} (in appropriately adapted notation)
concludes the argument.
\end{proof}

The combination of Theorem~\ref{theo-g-2.4} and \ref{theo-g-3.3} now yields the following result.

\begin{theorem}\label{theo-g-3.4}
  Let the assumptions of Theorems~\ref{theo-g-1.1} and \ref{theo-g-3.3} be satisfied. Let $q=1$ and $\Psi_1(0+)=0$ or $1<q<\infty$. Then the optimal Calder\'on space for the embedding \eqref{g2.7} has the following norm
  \beq
  \label{g3.19}
\|u\|_{\Lambda^k(C;X_0)} = \|u\|_C + \left(\int_0^T \left(\frac{\omega_k(u;t^{1/n})}{\Psi_q(t)}\right)^q \frac{\dint \Psi_q(t)}{\Psi_q(t)}\right)^{1/q}.
  \eeq
\end{theorem}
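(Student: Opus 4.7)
The plan is to combine Corollary~\ref{cor-g-2.5} with Theorem~\ref{theo-g-3.3}(ii) and then simplify using the monotonicity of the modulus of smoothness in its second argument. By Corollary~\ref{cor-g-2.5}, once we identify the optimal GBFS $X_{0}=X_{0}(0,T)$ for the cone embedding $K\mapsto X$, the optimal Calder\'on space for the embedding \eqref{g2.7} is precisely $\Lambda^{k}(C;X_{0})$, endowed with the norm \eqref{g2.3}. Under the standing assumption that either $q=1$ with $\Psi_{1}(0+)=0$ or $1<q<\infty$, Theorem~\ref{theo-g-3.3}(ii) applies and yields
\[
\|f\|_{X_{0}} = \|f\|_{\mathring{X}_{0}} + \Psi_{q}(T)^{-1}\,\|f\|_{L_{\infty}(T_{1},T)},
\]
with $\|\cdot\|_{\mathring X_{0}}$ given by \eqref{g3.16}. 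Substituting $f(t)=\omega_{k}(u;t^{1/n})$ into \eqref{g2.3} therefore reduces the problem to rewriting $\|f\|_{X_{0}}$ in a form free of the auxiliary $L_{\infty}$-piece.

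The first simplification uses the classical fact that $t\mapsto \omega_{k}(u;t^{1/n})$ is monotonically non-decreasing on $(0,T)$. Consequently $\|f\|_{L_{\infty}(0,t)}=\omega_{k}(u;t^{1/n})$ for every $t\in(0,T]$, and the $\mathring{X}_{0}$-part of the norm collapses to exactly the integral appearing on the right-hand side of \eqref{g3.19}:
\[
\|f\|_{\mathring{X}_{0}} \;=\; \left(\int_{0}^{T}\left(\frac{\omega_{k}(u;t^{1/n})}{\Psi_{q}(t)}\right)^{q}\frac{\dint\Psi_{q}(t)}{\Psi_{q}(t)}\right)^{1/q}.
\]
No calculation is needed here beyond invoking monotonicity of $\omega_{k}$ and the definition of $L_{\infty}$ on an interval.

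The key step is to show that the remaining summand $\Psi_{q}(T)^{-1}\|f\|_{L_{\infty}(T_{1},T)}$ can be absorbed into the $\|u\|_{C}$-term of \eqref{g2.3} without affecting the norm equivalence. By monotonicity again, $\|f\|_{L_{\infty}(T_{1},T)}=\omega_{k}(u;T^{1/n})$, and the elementary bound $\omega_{k}(u;\cdot)\leq 2^{k}\|u\|_{C}$ (immediate from \eqref{diff}) gives
\[
\Psi_{q}(T)^{-1}\,\|f\|_{L_{\infty}(T_{1},T)} \;\leq\; 2^{k}\,\Psi_{q}(T)^{-1}\,\|u\|_{C},
\]
with $\Psi_{q}(T)<\infty$ by hypothesis. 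The reverse bound is trivial since the $L_{\infty}$-term is non-negative. Hence the full norm \eqref{g2.3} on $\Lambda^{k}(C;X_{0})$ is equivalent to $\|u\|_{C}+\|f\|_{\mathring{X}_{0}}$, which is precisely the right-hand side of \eqref{g3.19}.

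The only subtle point I anticipate is justifying that discarding the $L_{\infty}(T_{1},T)$-term does not destroy the \emph{optimality} (not merely the norm equivalence), but this follows automatically: changing $\|\cdot\|_{X_{0}}$ to an equivalent GBFS norm preserves both the optimality property of $X_{0}$ for $K\mapsto X$ and, through \eqref{g2.3}, the optimality of $\Lambda^{k}(C;X_{0})$ asserted in Corollary~\ref{cor-g-2.5}. No further delicate argument is needed; the work has already been done in Theorems~\ref{theo-g-2.4}, \ref{theo-g-3.3} and in Section~\ref{sect-3a}.
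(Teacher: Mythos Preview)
Your proof is correct and follows essentially the same route as the paper: invoke Corollary~\ref{cor-g-2.5} and Theorem~\ref{theo-g-3.3}(ii) to obtain the $X_0$-norm of $\omega_k(u;t^{1/n})$, then use the monotonicity of $t\mapsto\omega_k(u;t^{1/n})$ together with $\omega_k(u;T^{1/n})\le 2^k\|u\|_C$ to collapse the $L_\infty$-pieces. The only cosmetic difference is that you write equalities $\|f\|_{L_\infty(0,t)}=\omega_k(u;t^{1/n})$ where the paper writes inequalities, but this is harmless since $\omega_k(u;\cdot)$ is non-decreasing and continuous.
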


\begin{proof}
  Theorem~\ref{theo-g-3.3} states that the GBFS $X_0=X_0(0,T)$ is optimal for the embedding $K\mapsto X$, where $K$ is the cone described by \eqref{g1.10} with \eqref{g1.12}. Then Corollary~\ref{cor-g-2.5} shows that the corresponding Calder\'on space $\Lambda^k(C;X_0)$ is optimal for the embedding \eqref{g2.7}. Thus
  \beq
  \|u\|_{\Lambda^k(C;X_0)} = \|u\|_C + \left\|\omega_k(u;\tau^{1/n})\right\|_{X_0(0,T)}.
  \label{g10.1}
  \eeq
  We substitute \eqref{g3.18} into \eqref{g10.1} and arrive at
  \begin{align}
    \|u\|_{\Lambda^k(C;X_0)} \simeq & \|u\|_C +
    \Psi_q(T)^{-1} \left\|\omega_k(u;\tau^{1/n})\right\|_{L_\infty(T_1,T)}\nonumber\\
    &+ \left(\int_0^T \left(\frac{\left\|\omega_k(u;\tau^{1/n})\right\|_{L_\infty(0,t)}}{\Psi_q(t)}\right)^q \frac{\dint \Psi_q(t)}{\Psi_q(t)}\right)^{\frac1q}.    
\label{g10.2}
  \end{align}
  
  But $\omega_k(u; \tau^{1/n})$ increases with respect to $\tau$, hence
  \begin{align*}
    \left\|\omega_k(u;\tau^{1/n}\right\|_{L_\infty(T_1,T)} & \leq  \omega_k(u;T^{1/n}) \leq 2^k \|u\|_C, \\
    \left\|\omega_k(u;\tau^{1/n})\right\|_{L_\infty(0,t)} ~ & \leq \omega_k(u;t^{1/n}), \quad t\in (0,T).
    \end{align*}
Together with \eqref{g10.2} this implies \eqref{g3.19}.  
\end{proof}

\begin{remark}\label{rem-g-3.5}
  In the case $q=1$ and $\Psi_1(0+)>0$, the embedding \eqref{g2.7} takes place `on the limit of the smoothness', and we obtain $\Lambda^k(C;X_0)=C(\rn)$. According to the results in \cite{GoHa} in this case there exist functions $u\in H^G_E(\rn)$ such that $\omega_k(u;t^{1/n})\to 0$ for $t\to 0+$ arbitrarily slowly. Note that in this case $X_0(0,T)=L_\infty(0,T)$ by \eqref{g3.17}, such that the above norm
  \[
  \|u\|_{\Lambda^k(C;X_0)} = \|u\|_C + \left\|\omega_k(u;\tau^{1/n})\right\|_{L_\infty(0,T)} \simeq \|u\|_C.
\]
In that case we cannot say anything else than $u\in C(\rn)$.
\end{remark}

\begin{remark}\label{rem-g-3.6}
  We return to Examples~\ref{exm-g-1.3}, \ref{exm-g-1.4}. If $\alpha\neq k$, then Theorems~\ref{theo-g-3.3}, \ref{theo-g-3.4} can be applied. In the limiting case $\alpha=k$ some special care is needed. This follows from estimate \eqref{g1.20} in Example~\ref{exm-g-1.3}. In Example~\ref{exm-g-1.4} we have the equality \eqref{g1.24} when $\lambda$ is slowly varying on $(0,T]$. So in some sense \eqref{g1.12} can be understood as a special case of \eqref{g1.24} with $\lambda\equiv 1$. Thus Remark~\ref{rem-g-3.2} applies and implies that we can use Theorems~\ref{theo-g-3.3}, \ref{theo-g-3.4} in case of $\alpha\neq k$.  
\end{remark}

Before we can state our next main result, Theorem~\ref{theo-g-3.7} below, which also covers the delicate limiting case $\alpha=k$, we need some further preparation.

\begin{lemma}
  \label{lemma-g-10.1}
  Let $\lambda>0$ be a continuous function on $(0,T]$ such that for some $\delta\in (0,1)$ the function $\lambda(t)t^{-\delta}$ decreases. Then
  \[
  \lambda(s) + \int_s^t \lambda(\tau)\tau^{-1}\dint \tau \leq \frac{\lambda(s)}{\delta} \left(\frac{t}{s}\right)^\delta,\quad s\in (0,t], \quad t\in (0,T].
      \]
    \end{lemma}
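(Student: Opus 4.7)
The plan is to exploit the monotonicity hypothesis directly: since $\lambda(\tau)\tau^{-\delta}$ is decreasing, for $s\le\tau\le t$ we have the pointwise bound $\lambda(\tau)\le \lambda(s)(\tau/s)^\delta$. Substituting this into the integral is the only real ingredient; everything else is algebra.

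First I would estimate the integral. Using the inequality above,
\[
\int_s^t \lambda(\tau)\tau^{-1}\,\mathrm{d}\tau \;\le\; \lambda(s) s^{-\delta}\int_s^t \tau^{\delta-1}\,\mathrm{d}\tau \;=\; \frac{\lambda(s)}{\delta}\left(\Bigl(\frac{t}{s}\Bigr)^{\!\delta}-1\right).
\]
Adding $\lambda(s)$ to both sides gives
\[
\lambda(s)+\int_s^t \lambda(\tau)\tau^{-1}\,\mathrm{d}\tau \;\le\; \lambda(s)\left(1-\frac{1}{\delta}+\frac{1}{\delta}\Bigl(\frac{t}{s}\Bigr)^{\!\delta}\right).
\]

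Finally, since $\delta\in(0,1)$ we have $1-\delta^{-1}\le 0$, so the bracketed expression is dominated by $\delta^{-1}(t/s)^\delta$, which yields the claimed inequality. There is no real obstacle here; the only point worth noting is that the factor $\delta^{-1}$ (which blows up as $\delta\to 0+$) is forced by the integral term, so the slow-variation-type decay of $\lambda$ (small $\delta$) translates into a correspondingly large constant, consistent with the usual estimates for slowly varying functions recalled in Remark~\ref{rem-g-5.1}.
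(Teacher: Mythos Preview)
Your proof is correct and follows essentially the same route as the paper: use the monotonicity of $\lambda(\tau)\tau^{-\delta}$ to bound $\lambda(\tau)\le \lambda(s)(\tau/s)^\delta$, integrate to get $\int_s^t\lambda(\tau)\tau^{-1}\,\mathrm{d}\tau\le \frac{\lambda(s)}{\delta}\bigl((t/s)^\delta-1\bigr)$, and then add $\lambda(s)$ and drop the nonpositive term $\lambda(s)(1-\delta^{-1})$. The paper's proof is slightly terser in the final step, but the argument is identical.
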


\begin{proof}
  We use the assumed monotonicity and argue as follows,
  \begin{align*}
    \int_s^t\lambda(\tau)\tau^{-1}\dint\tau \leq \lambda(s)s^{-\delta}\int_s^t \tau^{\delta-1}\dint\tau = \frac{\lambda(s) s^{-\delta}}{\delta}\left(t^\delta-s^\delta\right).
    \end{align*}
  Hence
  \[
  \lambda(s)+\int_s^t\lambda(\tau)\tau^{-1}\dint\tau  \leq \frac{\lambda(s)}{\delta}\left(\frac{t}{s}\right)^\delta.\]
\end{proof}

\begin{corollary}
  Let $\varphi$ be given by \eqref{g3.15} with $\alpha=k$, and $\lambda>0$ be slowly varying on $(0,T]$. Then for every $\delta\in (0,1)$ the function $\Phi_k$ given by \eqref{g8.4} can be estimated by
    \beq
0<\Phi_k(\xi,t)\leq c\ t^\delta \xi^{k/n-\delta}\lambda(\xi),\quad \xi\in (0,t],\quad t\in (0,T],
    \label{g10.3}
    \eeq
    where $c=c(\delta,k,n)>0$.
\label{cor-g-10.2}
\end{corollary}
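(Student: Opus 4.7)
\medskip

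The plan is to substitute the explicit form $\varphi(\tau)=\tau^{k/n-1}\lambda(\tau)$ (since $\alpha=k$) into the definition \eqref{g8.4} of $\Phi_k$ and then reduce everything to Lemma~\ref{lemma-g-10.1}. Positivity is immediate because $\varphi>0$ on $(0,T]$, so $\Phi_k(\xi,t)>0$ from its defining expression as a sum of two positive integrals.

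For the upper bound, first I would evaluate the two integrals separately. The first integral is
\[
\int_0^\xi \varphi(\tau)\dint\tau = \int_0^\xi \tau^{k/n-1}\lambda(\tau)\dint\tau \simeq \xi^{k/n}\lambda(\xi),
\]
by the slowly varying estimate \eqref{g5.9} with $\gamma=k/n$. The second integral simplifies pleasantly because the power of $\tau$ cancels exactly in the limiting case $\alpha=k$:
\[
\xi^{k/n}\int_\xi^t \tau^{-k/n}\varphi(\tau)\dint\tau = \xi^{k/n}\int_\xi^t \tau^{-1}\lambda(\tau)\dint\tau.
\]
Adding the two contributions yields
\[
\Phi_k(\xi,t) \leq c\,\xi^{k/n}\!\left(\lambda(\xi)+\int_\xi^t \tau^{-1}\lambda(\tau)\dint\tau\right),
\]
with a constant $c$ depending only on $k/n$.

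At this point the main step is to apply Lemma~\ref{lemma-g-10.1} with $s=\xi$ to the factor in parentheses. The hypothesis of that lemma requires $\lambda(t)t^{-\delta}$ to be decreasing, which is exactly the second half of property \eqref{g1.23} of slowly varying functions, and is valid for every $\delta\in (0,1)$ (indeed for every $\delta>0$). Consequently,
\[
\lambda(\xi)+\int_\xi^t \tau^{-1}\lambda(\tau)\dint\tau \leq \frac{\lambda(\xi)}{\delta}\left(\frac{t}{\xi}\right)^{\delta},
\]
and multiplication by $\xi^{k/n}$ gives exactly
\[
\Phi_k(\xi,t)\leq \frac{c}{\delta}\,t^{\delta}\,\xi^{k/n-\delta}\lambda(\xi),
\]
which is \eqref{g10.3} with constant $c(\delta,k,n)=c/\delta$.

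The only genuine obstacle is the small one of keeping track of the constants in \eqref{g5.9} and making sure that the slowly varying property \eqref{g1.23} may be invoked with any prescribed $\delta\in(0,1)$, rather than only for some unspecified small exponent; this follows from the fact that \eqref{g1.23} is asserted for all $\gamma>0$. No discretization, no Hardy-type inequality, and no deeper property of $\lambda$ beyond \eqref{g1.23} and \eqref{g5.9} is needed.
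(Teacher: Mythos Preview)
Your proof is correct and follows essentially the same approach as the paper: substitute $\varphi(\tau)=\tau^{k/n-1}\lambda(\tau)$ into \eqref{g8.4}, use \eqref{g5.9} to handle the first integral, simplify the second integral via the exact cancellation at $\alpha=k$, and then invoke Lemma~\ref{lemma-g-10.1}. Your explicit verification of the hypothesis of Lemma~\ref{lemma-g-10.1} via \eqref{g1.23} is slightly more detailed than the paper's presentation, but the argument is the same.
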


\begin{proof}
  If $\varphi$  is given by \eqref{g3.15} with $\alpha=k$, then
  \beq
\int_0^\xi \varphi(\tau)\dint\tau = \int_0^\xi \tau^{k/n-1}\lambda(\tau) \dint\tau \simeq \xi^{k/n}\lambda(\xi),
  \label{g10.4}
  \eeq
  recall \eqref{g5.8}. Hence
  \[
  \Phi_k(\xi,t)=\int_0^\xi\varphi(\tau)\dint\tau + \xi^{k/n}\int_\xi^t \tau^{-k/n}\varphi(\tau)\dint\tau \simeq \xi^{k/n}\left(\lambda(\xi)+\int_\xi^t \lambda(\tau)\tau^{-1}\dint\tau\right). 
  \]
Application of Lemma~\ref{lemma-g-10.1} leads to \eqref{g10.3}.
\end{proof}

\begin {corollary}
  Let the assumptions of Corollary~\ref{cor-g-10.2} be satisfied, and let $\varrho_1$ be given by \eqref{g8.3}-\eqref{g8.4}. Then for any $\delta\in(0,1)$ there is some positive constant $c_1=c_1(\delta,k,n)$, such that
  \beq
\varrho_1(g)\leq c_1 \sup_{t\in (0,T]} \left(V(t)^{-1} t^\delta\int_0^t s^{\frac{k}{n}-\delta} \lambda(s)g(s)\dint s\right),\quad g\in M^+(0,T).
  \label{g10.5}
  \eeq
  \label{cor-g-10.3}
\end{corollary}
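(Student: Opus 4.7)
The plan is short: this corollary is a direct consequence of the pointwise estimate for $\Phi_k$ already established in Corollary~\ref{cor-g-10.2}, inserted into the definition of $\varrho_1$. So the proof is essentially a one-line substitution, with the only real content being to verify that the suprema and integrals commute in the expected way and that all constants depend only on $\delta,k,n$ (as claimed).

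First, I would recall that by \eqref{g8.3},
\[
\varrho_1(g) = \sup_{t\in(0,T]} \left( V(t)^{-1} \int_0^t \Phi_k(\xi,t)\, g(\xi)\dint\xi\right),
\]
and then apply Corollary~\ref{cor-g-10.2}, which (under the assumption $\alpha=k$ and $\lambda$ slowly varying) says that for every $\delta\in(0,1)$,
\[
\Phi_k(\xi,t)\leq c\, t^\delta \xi^{k/n-\delta} \lambda(\xi),\qquad \xi\in(0,t],\ t\in(0,T],
\]
with $c=c(\delta,k,n)>0$. Since $g\geq 0$ and $V(t)>0$, this pointwise bound can be inserted under the integral sign, and the factor $t^\delta$ pulled out of the integral in $\xi$, yielding
\[
V(t)^{-1}\int_0^t \Phi_k(\xi,t) g(\xi)\dint\xi \leq c\, V(t)^{-1} t^\delta \int_0^t \xi^{k/n-\delta} \lambda(\xi)g(\xi)\dint\xi.
\]
Taking the supremum over $t\in(0,T]$ on both sides gives exactly \eqref{g10.5} with $c_1=c$.

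There is no real obstacle here: the delicate work has already been done in Lemma~\ref{lemma-g-10.1} and Corollary~\ref{cor-g-10.2}, where the slow variation of $\lambda$ and the integrability on $(0,\xi]$ coming from $\alpha=k$ were exploited. The only tiny point to mention is that the estimate is valid for \emph{every} $\delta\in(0,1)$, which is inherited directly from the freedom in Corollary~\ref{cor-g-10.2}; the constant deteriorates as $\delta\to 0+$ through the factor $\delta^{-1}$ coming from Lemma~\ref{lemma-g-10.1}, but this dependence is subsumed into $c_1(\delta,k,n)$.
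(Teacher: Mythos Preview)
Your proposal is correct and follows exactly the paper's approach: the paper's own proof is the single line ``This follows immediately by substituting \eqref{g10.3} into \eqref{g8.4}'', which is precisely the substitution you carry out in more detail.
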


\begin{proof}
This follows immediately by substituting \eqref{g10.3} into \eqref{g8.4}.
\end{proof}

\begin{corollary}
  Let the assumptions of Corollary~\ref{cor-g-10.2} be satisfied, and let $\varrho_1$ be given by \eqref{g8.3}-\eqref{g8.4}.  Then for any $\delta\in(0, \min\{1,k/n\})$ there is some positive constant $c_2=c_2(\delta,k,n)$, such that
  \beq
  \varrho_1(g)\leq c_2 \sup_{t\in (0,T]} \left(V(t)^{-1} t^\delta\int_0^t \tau^{\frac{k}{n}-\delta-1}\lambda(\tau) \left(\int_\tau^t g(s)\dint s\right)\dint\tau\right),\quad g\in M^+(0,T).
  \label{g10.6}
  \eeq
  \label{cor-g-10.4}
    \end{corollary}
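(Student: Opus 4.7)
The plan is to reduce Corollary~\ref{cor-g-10.4} to Corollary~\ref{cor-g-10.3} by rewriting the integrand $s^{k/n-\delta}\lambda(s)$ as a cumulative integral and then applying Fubini's theorem to rearrange the resulting double integral. The key input is the slow variation of $\lambda$, expressed quantitatively in Remark~\ref{rem-g-5.1}, together with the restriction $\delta<k/n$ (needed for the exponent $\gamma=k/n-\delta$ to be positive in \eqref{g5.9}).

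Concretely, the first step will be to note that by \eqref{g5.9}, for every $s\in(0,T]$ and $\delta\in(0,k/n)$,
\[
\int_0^s \tau^{k/n-\delta-1}\lambda(\tau)\dint\tau \simeq s^{k/n-\delta}\lambda(s),
\]
with constants depending only on $k,n,\delta$. Substituting this equivalence into the right-hand side of \eqref{g10.5} from Corollary~\ref{cor-g-10.3} yields
\[
\int_0^t s^{k/n-\delta}\lambda(s)\,g(s)\dint s \simeq \int_0^t \left(\int_0^s \tau^{k/n-\delta-1}\lambda(\tau)\dint\tau\right) g(s)\dint s.
\]
A change of the order of integration (Fubini's theorem, all integrands being non-negative) transforms the double integral on the right into
\[
\int_0^t \tau^{k/n-\delta-1}\lambda(\tau)\left(\int_\tau^t g(s)\dint s\right)\dint\tau,
\]
exactly the expression appearing in \eqref{g10.6}. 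Taking $V(t)^{-1}t^\delta$ times this identity and then the supremum over $t\in(0,T]$ produces the claimed inequality with a constant $c_2=c_2(\delta,k,n)$ that absorbs $c_1$ from Corollary~\ref{cor-g-10.3} and the equivalence constant from \eqref{g5.9}.

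No step of this argument is genuinely difficult, and the only thing to be careful about is checking that the hypothesis $\delta<\min\{1,k/n\}$ used here matches the one in Corollary~\ref{cor-g-10.2}. The upper bound $\delta<1$ is inherited from Corollary~\ref{cor-g-10.2} (it was needed to apply Lemma~\ref{lemma-g-10.1}); the additional restriction $\delta<k/n$ appears only at the moment one invokes \eqref{g5.9}, since otherwise the exponent $k/n-\delta$ would be non-positive and the slowly-varying integral identity would fail. Thus the proof reduces to a single application of Corollary~\ref{cor-g-10.3}, one application of \eqref{g5.9}, and one use of Fubini; no further technical obstacle arises.
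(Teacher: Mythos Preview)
Your proof is correct and follows exactly the same route as the paper: invoke the slowly-varying estimate \eqref{g5.9} (stated in the paper as \eqref{g10.7}) to replace $s^{k/n-\delta}\lambda(s)$ by $\int_0^s \tau^{k/n-\delta-1}\lambda(\tau)\dint\tau$ in \eqref{g10.5}, then change the order of integration. Your additional commentary on why the bound $\delta<k/n$ enters at this step is accurate and matches the paper's reasoning.
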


\begin{proof}
  Recall that when $\lambda>0$ is slowly varying, we have for $0<\delta<k/n$ that
  \beq
  s^{\frac{k}{n}-\delta} \lambda(s)\simeq \int_0^s \tau^{\frac{k}{n}-\delta.-1}\lambda(\tau)\dint\tau,\quad s\in (0,T].
  \label{g10.7}
  \eeq
 Substituting this into \eqref{g10.5} and changing the order of integration we arrive at \eqref{g10.6}.
\end{proof}

Now we come to our next essential result.

\begin{theorem}\label{theo-g-3.7}
  Let  $\varphi$ be determined by \eqref{g3.15}. Assume that $\alpha\leq k$ and \eqref{g3.9} is satisfied, or $\alpha>k$ and \eqref{g3.11} is satisfied with
  \[
   \widetilde{W}(t)=V(t)^{-1} t^{\frac{\alpha-k}{n} } \lambda(t).
    \]
    Then the formulas \eqref{g3.16}-\eqref{g3.18}, \eqref{g3.19} hold with $\Psi_q$, given by \eqref{g3.5}, where
    \beq
    \label{g3.20}
W(t)=V(t)^{-1} t^{\alpha/n} \lambda(t),\quad t\in (0,T].
   \eeq
\end{theorem}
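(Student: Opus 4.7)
The plan is to split the argument according to the relative size of $\alpha$ and $k$. Before entering the case distinction I would verify that the functions $W$ and $\widetilde{W}$ from \eqref{g3.4} and \eqref{g3.12} coincide (up to equivalence) with the expressions declared in the statement. Since $\lambda$ is slowly varying and $\alpha>0$, formula \eqref{g5.9} applied to $\varphi(\tau)=\tau^{\alpha/n-1}\lambda(\tau)$ gives $\int_0^t\varphi(\tau)\dint\tau\simeq t^{\alpha/n}\lambda(t)$, so $W(t)\simeq V(t)^{-1}t^{\alpha/n}\lambda(t)$, matching \eqref{g3.20}, while $\widetilde{W}(t)=V(t)^{-1}t^{1-k/n}\varphi(t)=V(t)^{-1}t^{(\alpha-k)/n}\lambda(t)$ coincides with the expression prescribed in the hypothesis. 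The resulting $\Psi_q(T)$ is then equivalent to the one in Theorem~\ref{theo-g-3.3}, and the assumption $\Psi_q(T)<\infty$ of that theorem is inherited from the definition of $\Psi_q$ via $W$.

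If $\alpha<k$, then by Remark~\ref{rem-g-3.2} the condition \eqref{g3.8} is automatically satisfied; combined with the assumed \eqref{g3.9}, this is precisely condition \textbf{(A)}, so Theorems~\ref{theo-g-3.3} and \ref{theo-g-3.4} apply directly. If $\alpha>k$, Remark~\ref{rem-g-3.2} yields \eqref{g3.10}; together with the assumed \eqref{g3.11} (where $\widetilde{W}$ is exactly the one from the statement) this is condition \textbf{(B)}, so again Theorems~\ref{theo-g-3.3} and \ref{theo-g-3.4} apply. In both regimes the conclusion is immediate once $W$ and $\widetilde{W}$ have been identified.

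The delicate case is the limit $\alpha=k$, where by Remark~\ref{rem-g-3.2} \emph{neither} \eqref{g3.8} \emph{nor} \eqref{g3.10} holds, so Theorem~\ref{theo-g-3.3} is not directly available. By Theorem~\ref{theo-g-9.6} it suffices to establish $\varrho_0(g)\simeq \widetilde{\varrho}_0(g)$, with $\widetilde{\varrho}_0$ given by \eqref{g9.44} for $q=1$ and \eqref{g9.45} for $1<q<\infty$; this is where Corollaries~\ref{cor-g-10.2}--\ref{cor-g-10.4} replace the algebraic identities used in Corollary~\ref{cor-g-8.8} and Lemma~\ref{lemma-g-9.4}. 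For $q=1$ my approach is to fix $\delta\in\bigl(0,\min\{\varepsilon,k/n,1\}\bigr)$ with $\varepsilon$ as in \eqref{g3.9}, apply Corollary~\ref{cor-g-10.4} to bound $\varrho_1(g)$, replace the inner $\int_\tau^t g$ by $\int_\tau^T g$ under the supremum, and insert the pointwise estimate $\int_\tau^T g(s)\dint s\lesssim V(\tau)\tau^{-k/n}\lambda(\tau)^{-1}\widetilde{\varrho}_0(g)$ which is immediate from \eqref{g9.44} together with $\int_0^\tau\varphi\simeq \tau^{k/n}\lambda(\tau)$. What remains is $\int_0^t \tau^{-\delta-1}V(\tau)\dint\tau$, and condition \eqref{g3.9} forces $V(\tau)\leq V(t)(\tau/t)^\varepsilon$ for $\tau\leq t$, so this integral is controlled by $V(t)t^{-\delta}(\varepsilon-\delta)^{-1}$. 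Feeding this back yields $\varrho_1(g)\lesssim \widetilde{\varrho}_0(g)$, whence \eqref{g8.1} gives $\varrho_0(g)\simeq \widetilde{\varrho}_0(g)$.

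For $1<q<\infty$ in the limit case $\alpha=k$ the same strategy applies, but the supremum is replaced by a weighted $L^{q'}$-norm: $\varrho_2(g)$ is handled by H\"older's inequality combined with Lemma~\ref{lemma-g-9.2}, exactly as in Step~3 of the proof of Lemma~\ref{lemma-g-9.4}, while $\varrho_1(g)$ is bounded by substituting \eqref{g10.3} and applying a Hardy-type inequality together with Corollary~\ref{cor-g-9.3}. The main obstacle I anticipate is the bookkeeping in the choice of $\delta$ in Corollary~\ref{cor-g-10.4}: it must be strictly less than $\varepsilon$ (for convergence at $0^+$), strictly less than $k/n$ (so that \eqref{g10.7} is available), and, in the $q>1$ case, compatible with the exponents in \eqref{g9.16}--\eqref{g9.17}. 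Once $\varrho_0(g)\simeq \widetilde{\varrho}_0(g)$ has been established in all three regimes, the passage to the associated norm via \cite[Thm.~1.2]{BaGo} (as invoked in the proof of Theorem~\ref{theo-g-3.3}) produces \eqref{g3.16}--\eqref{g3.18} with $\Psi_q$ given by \eqref{g3.20}, and the Calder\'on-norm formula \eqref{g3.19} then follows from Theorem~\ref{theo-g-3.4}.
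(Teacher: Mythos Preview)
Your proposal is correct and follows essentially the same route as the paper: the non-limiting cases $\alpha<k$ and $\alpha>k$ are reduced to Theorem~\ref{theo-g-3.3} via Remark~\ref{rem-g-3.2}, while for $\alpha=k$ you establish $\varrho_0\simeq\widetilde{\varrho}_0$ directly using Corollary~\ref{cor-g-10.4} together with \eqref{g3.9} for $q=1$, and via \eqref{g10.3}, a Hardy inequality governed by Corollary~\ref{cor-g-9.3}, and H\"older for $\varrho_2$ when $1<q<\infty$. One minor imprecision: you write ``By Theorem~\ref{theo-g-9.6} it suffices to establish $\varrho_0\simeq\widetilde{\varrho}_0$'', but Theorem~\ref{theo-g-9.6} itself assumes {\bf(A)} or {\bf(B)} and \emph{concludes} that equivalence; what you actually need (and what the paper does) is to invoke Lemma~\ref{lemma-g-8.1} or~\ref{lemma-g-9.1} directly for the decomposition of $\varrho_0$, prove the equivalence by hand, and then pass to the optimal norm via \cite[Thm.~1.2]{BaGo} as in the proof of Theorem~\ref{theo-g-3.3}.
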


\begin{proof}
  Note first that it is left to consider the case $\alpha=k$ only since the other cases are already covered by Theorem~\ref{theo-g-3.3}, in particular using condition {\bfseries (A)} if $\alpha<k$, and condition {\bfseries (B)} if $\alpha>k$. So we may assume in the following that $\alpha=k$.\\
  
  \underline{\em Step 1}.\quad First we deal with the case $q=1$. For $\varphi$ given by \eqref{g3.15} we have \eqref{g10.4}. Thus $\wt{\varrho}_0$ given by \eqref{g8.2} can be estimated by
  \[\wt{\varrho}_0(g)\simeq \sup_{\tau\in (0,T]} \left(V(\tau)^{-1} \tau^{k/n}\lambda(\tau)\left(\int_\tau^T g(s)\dint s\right)\right),\quad g\in M^+(0,T), 
  \]
  such that
  \[
  \tau^{k/n} \lambda(\tau)\int_\tau^t g(s)\dint s \leq c\wt{\varrho}_0(g) V(\tau),\quad \tau\in (0,t],\quad t\in (0,T].
  \]
  In view of this we can continue \eqref{g10.6} here by
  \[
  \varrho_1(g)\leq c_3\sup_{t\in (0,T]} \left( V(t)^{-1} t^\delta\left(\int_0^t \tau^{-\delta-1}V(\tau)\dint\tau\right)\right)\wt{\varrho}_0(g),
  \]
  for any $\delta\in (0,\min\{1,k/n\})$. Recall that $V(\tau)\tau^{-\ve}$ is monotonically increasing by \eqref{g3.9}. Assume that $\delta<\ve$. Then
  \[
  \int_0^t \tau^{-\delta-1}V(\tau)\dint\tau\leq V(t) t^{-\ve} \int_0^t \tau^{\ve-\delta-1}\dint\tau = V(t) t^{-\delta}(\ve-\delta)^{-1}.
  \]
  Consequently,
  \[
  \varrho_1(g)\leq c_4 \wt{\varrho}_0(g),\quad g\in M^+(0,T),
  \]
  with $c_4=c_4(\delta,\ve,k,n)>0$. Together with \eqref{g8.1} this shows that \eqref{g9.19} is valid for the norm  $\varrho_0$ given by \eqref{g7.11} which is associated to the optimal one. In other words, we can apply the results given in \eqref{g3.16}-\eqref{g3.18} as explained in the proof of Theorem~\ref{theo-g-3.3}.\\

    \underline{\em Step 2}.\quad Assume now $1<q<\infty$. First  we show that assertion \eqref{g9.43} is valid  with  $\wt{\varrho}_0(g)$ from \eqref{g9.45} where $\varphi$
    is given by \eqref{g3.15} with   $\alpha=k.$ Applying \eqref{g10.4} implies 
    
    \beq
    \label{g10.8}
    \wt{\varrho}_0(g)\simeq \left(\int_0^T  \left( t^{k/n}\lambda(t)\left(\int_t^T g(s)\dint s\right)\right)^{q'}w(t)\dint t\right)^{1/q'}.
    \eeq
    
    Further, \eqref{g10.3}, \eqref{g9.3}, and \eqref{g9.4}  yield
    
      \[
 {\varrho}_1(g)\leq c_{1}\left(\int_0^T  \left( \int_0^t s^{k/n-\delta}\lambda(s)g(s)\dint s\right)^{q'}t^{\delta q'}w(t)\dint t\right)^{1/q'}, 
   \]
    
     \[
    {\varrho}_2(g)\leq c_{2}\left(\int_0^T   s^{k/n-\delta}\lambda(s)g(s)\dint s\right)T^{\delta } \left( \int_T^{\infty} w(t)\dint t\right)^{1/q'}. 
    \]
    
   Now, substituting \eqref{g10.7} into these formulas and some change of order of integration  lead to
   \beq
   \label{g10.9}
   {\varrho}_1(g)\leq \tilde{c}_{1}\left(\int_0^T  \left( \int_0^t \tau^{k/n-\delta-1}\lambda(\tau)\left(\int_{\tau}^T g(s)\dint s\right)\dint \tau\right)^{q'}t^{\delta q'}w(t)\dint t\right)^{1/q'}, 
   \eeq
   
   \beq
   \label{g10.10}
    {\varrho}_2(g)\leq \tilde{c}_{2}\left(\int_0^T   \tau^{k/n-\delta-1}\lambda(\tau)\left(\int_{\tau}^T g(s)\dint s\right)\dint \tau\right)T^{\delta } \left( \int_T^{\infty} w(t)\dint t\right)^{1/q'},
    \eeq
    in view of the obvious estimate $\int_{\tau}^t g(s)\dint s\leq \int_{\tau}^T g(s)\dint s, \,\tau< t\leq T. $
    
    Now we apply in appropriately adapted notation Hardy's inequality (see \cite[Thm.~2, Ch.~1]{mazya}) to estimate the right-hand side in \eqref{g10.9} and obtain that 
   \begin{align}   
    \left(\int_0^T  \left( \int_0^t \tau^{k/n-\delta-1}\lambda(\tau)\left(\int_{\tau}^T g(s)\dint s\right)\dint \tau\right)^{q'}t^{\delta q'}w(t)\dint t\right)^{1/q'}
    \nonumber\\    
    \leq c_{3}  \left(\int_0^T  \left(  t^{k/n}\lambda(t)\int_{\tau}^T g(s)\dint s\right)^{q'}w(t)\dint t\right)^{1/q'}
    \label{g10.11}
  \end{align}
    holds if, and only if,
    \[
    B_{\delta}:=\sup\limits_{t \in (0, T)} \left(\, \left(\int_t^T   \tau^{\delta q'}w(\tau)\dint \tau\right)^{1/q'}  \left(\int_0^t   \tau^{-(\delta+1)q}w(\tau)^{-q/q'}\dint \tau\right)^{1/q} \right) <\infty.
    \]
    This condition is satisfied in view of Corollary \ref{cor-g-9.3}: if $0\leq\delta < \varepsilon /q,$ then 
    
    \beq
    \label{g10.12}
    B_{\delta}\leq \frac{\left( q/q'\right)^{1/q'}}{ \varepsilon -\delta q}.
    \eeq
     Moreover, for the best possible constant $c_3$ in \eqref{g10.11} we have
    \beq
    B_{\delta} \leq c_3\leq  B_{\delta}q^{1/q}\left({q'}\right)^{1/q'}\leq \frac{q}{ \varepsilon -\delta q}.
    \label{g10.13}
    \eeq
    Estimates~\eqref{g10.9},\eqref{g10.11}, and \eqref{g10.8} imply that
    \beq
    \varrho_{1}(g) \leq c_{4} \wt{\varrho}_0(g),\quad g \in M^{+}(0, T),
    \label{g10.14}
     \eeq
     where $c_{4}>0$ is independent of $g$.
     
     Similarly, by H\"older's inequality we get from \eqref{g10.10} in view of \eqref{g10.8}
     \[
      {\varrho}_2(g)\leq {c}_{5}\wt{\varrho}_0(g)\left(\int_0^T   \tau^{-(\delta+1)q}w(\tau)^{-q/q'}\dint \tau\right)^{1/q}T^{\delta }\left( \int_T^{\infty} w(\tau)\dint \tau\right)^{1/q'}.
     \]
     But 
     \[
     T^{\delta }\left( \int_T^{\infty} w(\tau)\dint \tau\right)^{1/q'}\leq \left( \int_T^{\infty} \tau^{\delta q'}w(\tau)\dint \tau\right)^{1/q'},
     \]
   and estimating \eqref{g9.18} with  $t=T,\, T_0=\infty,$    yields
    \beq
   \varrho_{2}(g) \leq c_{6} \wt{\varrho}_0(g),\quad g \in M^{+}(0, T),
   \label{g10.15}
   \eeq
   where $c_{6}>0$ is independent of $g$.
    The assertions \eqref{g9.1}, \eqref{g10.14}, and \eqref{g10.15} imply \eqref{g9.43} with $\wt{\varrho}_0(g)$ from \eqref{g9.45}. 
    
    Recall that here  $\varphi$ is given by \eqref{g3.15}, where    $\alpha=k,$ such that \eqref{g9.45} coincides with \eqref{g10.8}. 
    
    It remains to describe the optimal norm  $\|\cdot\|_{X_0}$ which is associated to the norm $\wt{\varrho}_0(g)$  in \eqref{g10.8}. This description is given by the formulas \eqref{g3.16} - \eqref{g3.18} where we have to consider $\varphi$ given by \eqref{g3.15} with      $\alpha=k.$ Thus, formula \eqref{g3.5} is valid where for given $\varphi$ the  function   $W$  \eqref{g3.4} has the equivalent form \eqref{g3.20}.

\end{proof}

\section{Some explicit descriptions of the optimal Calder\'on space}
\label{sect-4}
Here we present a more detailed consideration in the case of classical Bessel potentials, see Example~\ref{exm-g-1.3}. Note that in Example~\ref{exm-g-4.2} below we extend some preceding results in \cite{GNO-7}. The results presented here were announced in our paper \cite{GoHa-3}.

We start with a preparatory lemma which we shall need in the arguments below.

\begin{lemma}\label{lemma-g-11.1}
  Let $0<T\leq\infty$, $\beta>0$, $\lambda$ be a positive slowly varying function on $(0,T)$,  and
  \[
  A(t)=\int_t^T \tau^{-\beta-1}\lambda(\tau)\dint\tau,\quad t\in (0,T).
  \]
Then there exists some $\ve>0$ such that $t^\ve A(t)$ is monotonically decreasing.
\end{lemma}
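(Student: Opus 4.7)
The plan is to reduce the lemma to a short differential inequality, using a quantitative version of the slowly-varying estimate~\eqref{g5.10}.

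First I would make \eqref{g5.10} explicit by splitting the integrand as
\[
\tau^{-\beta-1}\lambda(\tau) = \tau^{-\beta/2-1}\bigl[\tau^{-\beta/2}\lambda(\tau)\bigr],
\]
and invoking \eqref{g1.23} with $\gamma=\beta/2>0$: the function $\tau\mapsto \tau^{-\beta/2}\lambda(\tau)$ is monotonically decreasing on $(0,T)$, so for every $\tau\ge t$ it is bounded above by $t^{-\beta/2}\lambda(t)$. Pulling this constant out of the integral and computing the elementary integral of $\tau^{-\beta/2-1}$ over $[t,T]$ yields the explicit bound
\[
A(t) \le \frac{2}{\beta}\, t^{-\beta}\lambda(t), \quad t\in(0,T).
\]

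Next, since $\tau^{-\beta-1}\lambda(\tau)$ is locally integrable on $(0,T)$ (which is built into the assumption that $A$ is finite), the function $A$ is absolutely continuous on each compact subinterval of $(0,T)$, with $A'(t)=-t^{-\beta-1}\lambda(t)$ for almost every $t$. Setting $\ve:=\beta/2$, the preceding bound gives
\[
\frac{d}{dt}\bigl(t^\ve A(t)\bigr) = \ve\, t^{\ve-1}A(t) - t^{\ve-\beta-1}\lambda(t) \le \Bigl(\ve\cdot\frac{2}{\beta} - 1\Bigr)\, t^{\ve-\beta-1}\lambda(t) = 0
\]
for almost every $t\in(0,T)$. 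Because $t^\ve A(t)$ is absolutely continuous with a.e.\ non-positive derivative, it is monotonically decreasing on $(0,T)$, as required.

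The only point deserving comment is that a generic slowly varying $\lambda$ need not be smooth; I expect this to be the minor ``obstacle'' in the write-up. It is resolved by exploiting the absolute continuity of the antiderivative $A$ rather than differentiating $\lambda$ itself, so no regularity of $\lambda$ beyond local integrability (implicit in the finiteness of $A$) is used.
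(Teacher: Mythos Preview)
Your argument is correct and follows essentially the same route as the paper: differentiate $t^{\ve}A(t)$ and bound $A(t)$ by $c_\beta\,t^{-\beta}\lambda(t)$ via the slowly-varying estimate, then choose $\ve$ so that $\ve c_\beta \le 1$. The only differences are cosmetic --- you make the constant explicit ($c_\beta=2/\beta$, hence $\ve=\beta/2$) by invoking \eqref{g1.23} directly, and you are more careful about justifying differentiation through absolute continuity of $A$, whereas the paper simply quotes \eqref{g5.10} and picks $\ve\in(0,c_\beta^{-1})$.
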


\begin{proof} 
	We have the equality
	\[
	\left[ t^{\varepsilon}A(t)\right]'=t^{\varepsilon-1}\left[ \varepsilon \int_t^T \tau^{-\beta-1}\lambda(\tau)\dint \tau -t^{-\beta}\lambda(t)\right].
	\]
	Applying estimate \eqref{g5.10} yields
	\[
	\left[ t^{\varepsilon}A(t)\right]'\leq t^{\varepsilon-1}\left[ t^{-\beta}\lambda(t)(\varepsilon c_{\beta}-1)\right]<0,
	\]	
	if we choose $\varepsilon \in (0, c_{\beta}^{-1}).$     
\end{proof}

\begin{example}\label{exm-g-4.1}
Let $0<\alpha<n$, $1\leq q<\infty$, $v=1$ in \eqref{g3.1}, such that $E=L_q(\rn)$ in Example~\ref{exm-g-1.3}. 
For $0<\alpha<n$, $q=1$, the space $H^G_E(\rn)$ is not embedded into $C(\rn)$. If $q>1$, the criterion for the embedding into $C(\rn)$ reads as
\[
\eqref{g1.6} \quad\text{is true}\iff \alpha>\frac{n}{q}.
\]
If $\frac{n}{q}<\alpha<\min\left(n,k+\frac{n}{q}\right)$, then the optimal Calder\'on space for the embedding \eqref{g2.7} has the norm
\beq
\label{g4.1}
\|u\|_{\Lambda^k(C;X_0)} = \|u\|_C + \left(\int_0^T \left(\frac{\omega_k(u;t^{1/n})}{t^{\alpha/n-1/q}}\right)^q \frac{\dint t}{t}\right)^{1/q}.
\eeq
In particular, this means that $\Lambda^k(C;X_0)$ coincides with the classical Besov space $B^{\alpha-n/q}_{\infty,q}(\rn)$, cf. \cite{nik,T-F1} for further details on Besov spaces.
\end{example}

\begin{proof}
	
\underline{\em Step 1}.\quad In the case considered here we have equivalence \eqref{g1.20}. Without loss of generality we can assume that
\beq
\label{g11.1}
	0<\alpha<n;\quad \varphi(t)=t^{\alpha/n-1},\quad  t \in (0,T].
	\eeq
	For the basic RIS $E(\rn)=L_q(\rn),\, 1\leq q<  \infty, $    the  criterion of the embedding \eqref{g1.6} has the form \eqref{g1.5} , such that
\beq
\label{g11.2}
\eqref{g1.6} \Longleftrightarrow \tau^{\alpha/n-1} \in L_{q'}(0, T) \Longleftrightarrow \alpha>n/q.
\eeq
	It means that in case of $q=1$ the embedding \eqref{g1.6} is impossible for this Example. \\
	
	\underline{\em Step 2}.\quad Now let $1<q<\infty$,  $n/q <\alpha <n$, and $\alpha \leq k$.  We have here $v(t)=1,\, V(t)=t;$                      condition \eqref{g3.9} is satisfied. So we may apply the corresponding results of Theorem \ref{theo-g-3.7}. According to \eqref{g3.4}, and  \eqref{g3.5} with $\varphi$ given by \eqref{g11.1}, and   $n/q <\alpha <n,$ we get
	\beq
	\label{g11.3}
	W(t)=\frac{n}{\alpha}t^{\alpha/n-1},\quad \Psi_q(t)=ct^{\alpha/n-1},\quad t \in (0, T], 
	\eeq
	where     $c=c(\alpha,n,q)>0.$ Therefore, formula \eqref{g3.19} leads to \eqref{g4.1}.\\
	
		\underline{\em Step 3}.\quad Now we consider the case $1<q<\infty;\, \max (n/q,  k)  <\alpha <\min ( n, k+n/q).$          
	Condition \eqref{g3.10} is satisfied, we may apply  Theorem \ref{theo-g-3.3} (case {\bf (B)}). Here, according to \eqref{g3.12}, and \eqref{g3.13} with $\varphi$ given by \eqref{g11.1}  we get
	\begin{align}
	\widetilde{W}(t) & =t^{\frac{\alpha-k}{n}-1}, \label{g11.4}\\
	U_{q}(t) & =\left( \int_t^T \tau^{(\frac{\alpha-k}{n}-1)q'}
	\dint \tau\right)^{1/q'}.
	\label{g11.5}
	\end{align}
%
%
	 We apply Lemma \ref{lemma-g-11.1} to the function $A(t)=U_q(t)^{q'}$ which corresponds to the special case $\lambda(t)=1;\quad \beta=\left[ 1-\frac{\alpha-k}{n}\right]^{q'}-1.$ 	For   $\alpha<k+n/q$ we have $\beta>0,$   such that in view of Lemma \ref{lemma-g-11.1} there exists some $\ve>0$ with the property that
	\[
        t^{\varepsilon q'}A(t)\quad\text{decreases monotonically} \iff 
        t^{\varepsilon}U_q(t)\quad\text{decreases monotonically}.
	\]	
	
	It shows that \eqref{g3.11} is valid. Therefore, we may apply Theorem \ref{theo-g-3.3} (case {\bf (B)})) here, as well as Theorem  \ref{theo-g-3.4}. We arrive at the descriptions  \eqref{g3.18},  \eqref{g3.19} for $1<q<\infty,$ and $\Psi_q$
	given  by \eqref{g11.3}. It leads to \eqref{g4.1}.
	\end{proof}

\begin{example}\label{exm-g-4.2}
  Let $0<\alpha<n$, $1\leq q<\infty$, $1<p<\infty$, $E=\Lambda_q(v)$, recall \eqref{g3.1}, where $v$ is given by
  \beq
  \label{g4.2}
v(t)=t^{q/p-1} b^q(t),\quad t\in (0,T),
  \eeq
  where $b$ is a positive slowly varying continuous function on $(0,T)$. In other words, $E(\rn)$  is a so-called Lorentz-Karamata space, cf. \cite{GNO-7}. Now we explicate Example~\ref{exm-g-1.3}. Note that in this case
  \beq\label{g4.2a}
\Psi_q(t)= \begin{cases} \sup\limits_{\tau\in (0,t]} \tau^{\frac{\alpha}{n}-\frac1p} b(\tau)^{-1}, & q=1, \\ \left(\int_0^t \tau^{q'\left(\frac{\alpha}{n}-\frac1p\right)} b(\tau)^{-q'} \frac{\dint\tau}{\tau}\right)^{1/q'}, & q>1,\end{cases}
  \eeq
for $t\in (0,T)$.
  
  \bli
\item[{\upshape\bfseries (i)}]
If $0<\alpha<\frac{n}{p}$, then $H^G_E(\rn)$ is not embedded into $C(\rn)$.
\item[{\upshape\bfseries (ii)}]
  If $\alpha=\frac{n}{p}$, then we have to distinguish further between $q=1$ and $q>1$: in case of $q=1$, we require also $\Psi_1(0+)=0$, and
  \[
  \eqref{g1.6} \quad\text{holds}\iff \Psi_1(t)=\sup_{\tau\in (0,t]} b(\tau)^{-1}<\infty, \quad t\in (0,T].
  \]
  In case of $q>1$, we arrive at
  \[
  \eqref{g1.6}\quad\text{holds}\iff \Psi_q(t)=\left(\int_0^t b(\tau)^{-q'} \frac{\dint\tau}{\tau}\right)^{1/q'}<\infty,  \quad t\in (0,T].
  \]
  In that case the optimal Calder\'on space for the embedding \eqref{g2.7} has the norm \eqref{g3.16}, where in case of $q>1$ we have
  \[
  \frac{\dint \Psi_q(t)}{\Psi_q(t)} \simeq \frac{b(t)^{-q'}}{\int_0^t b(\tau)^{-q'} \frac{\dint \tau}{\tau}} \ \frac{\dint t}{t}.
  \]\
\item[{\upshape\bfseries (iii)}]
  In case of $\frac{n}{p}<\alpha<\min\left(n,k+\frac{n}{p}\right)$, the optimal Calder\'on space for the embedding \eqref{g2.7} has the norm \eqref{g3.16}, where we conclude from \eqref{g4.2a} that
  \[
  \Psi_q(t) \simeq \begin{cases} t^{\frac{\alpha}{n}-\frac1p} b(t)^{-1}, & q=1, \\ t^{\frac{\alpha}{n}-\frac1p} b(t)^{-1}, & q>1,\end{cases}
  \]
  hence $\frac{\dint\Psi_q(t)}{\Psi_q(t)} \simeq \frac{\dint t}{t}$. 
  \eli
\end{example}

\begin{proof}
	\underline{\em Step 1}.\quad  Here we consider the case   of $\varphi$  given by \eqref{g11.1}. For the basic Lorentz-Karamata space $E=\Lambda_{q}(v)$       with $1\leq q <\infty,\, 1<p<\infty$, and $v(t)$ is given by \eqref{g4.2}.  Applying \eqref{g5.9} in appropriately adapted notation leads to
	\beq
	\label{g11.6}
	V(t)=\int_0^t v(\tau) \dint \tau \simeq t^{q/p}b(t)^q, \quad t \in (0, T),
	\eeq
   such that in view of \eqref{g3.4}
   \[
   W(t) \simeq t^{\frac{\alpha}{n}-\frac{q}{p}}b(t)^{-q}, \quad t \in (0, T).
   \]
	Thus, for  $q=1$ we have  by \eqref{g3.5} that
	\beq
	\label{g11.7}
   \Psi_1(t) \simeq \begin{cases} t^{\frac{\alpha}{n}-\frac1p} b(t)^{-1}, & \alpha >n/p,\\ B_{1}(t), & \alpha =n/p,\\
   	\infty,  & \alpha< n/p,
   \end{cases}
		\eeq
	where
	\beq
	\label{g11.8}
	B_{1}(t)=\sup \left\lbrace b(\tau)^{-1}: \tau \in (0, t]\right\rbrace.
	\eeq
	For   $1<q<\infty$   we get by \eqref{g3.5}
	\beq
	\label{g11.9}
	\Psi_q(t) \simeq \left( \int_0^t \tau^{q'(\frac{\alpha}{n}-\frac1p)} b(\tau)^{-q'}\tau^{-1}\dint \tau\right)^{1/q'},
	\eeq
	such that in view of \eqref{g5.9}, for $t \in (0, T)$,
	\beq
	\label{g11.10}
	\Psi_q(t) \simeq \begin{cases} t^{\frac{\alpha}{n}-\frac1p} b(t)^{-1},& \alpha>n/p,\\ 
	B_{q}(t), & \alpha =n/p,\\
	\infty,   & \alpha< n/p,
	\end{cases}
	\eeq
	
	where
	\beq
	\label{g11.11}
	B_{q}(t)=\left\lbrace \int_0^t b(\tau)^{-q'}\tau^{-1}\dint \tau \right\rbrace^{1/q'}.
	\eeq
	According to Lemma \ref{lemma-g-3.1},
	\[
	\eqref{g1.6}\Longleftrightarrow 	\eqref{g1.5}\Longleftrightarrow \Psi_q(T)<\infty.
	\] 
	
	We see that \eqref{g1.6} is impossible for  $\alpha<n/p;$ \eqref{g1.6} is valid for $\alpha>n/p;$ and the validity of \eqref{g1.6} for $\alpha=n/p$      depends on  $B_{q}:\, \eqref{g1.6} \Longleftrightarrow B_{q}(T)<\infty,$  where   $B_{q}$  is given by \eqref{g11.8}, \eqref{g11.10}.\\
	
	\underline{\em Step 2}.\quad  Assume that the conditions of embedding \eqref{g1.6} are satisfied, in particular, for $\alpha\geq n/p.$ If $\alpha\leq k$
	we may apply Theorem \ref{theo-g-3.7} with   $\lambda(t)=1.$ Condition \eqref{g3.9} is valid for function $V$ given by \eqref{g11.6}  for any $\varepsilon \in (0, q/p).$ Indeed, in this case  $V(t)t^{-\varepsilon}=t^{q/p-\varepsilon}b(t)^q$ increases monotonically,          because $b(t)^q$        is slowly varying. Then, by Theorem \ref{theo-g-3.7} we get the descriptions \eqref{g3.16}-\eqref{g3.18} with  $\Psi_q$  from \eqref{g11.7} in case of $q=1$,  or from \eqref{g11.10} in case of $1<q<\infty$.\\
	
	\underline{\em Step 3}.\quad  Finally, let	 $\alpha\geq n/p$, $k< \alpha< \min ( n, k+n/p)$.  We need to verify  condition \eqref{g3.11}. According to \eqref{g3.12} with  $\varphi$ given by  \eqref{g11.1}, and $V$ given by \eqref{g11.6}   we obtain
	\[
\widetilde{W}(t)=t^{\frac{\alpha-k}{n}-\frac{q}{p}}b(t)^{-q},\quad t \in (0,T).
	\]
	
	Here,   $\frac{\alpha-k}{n}-\frac{1}{p}<0,$   such that for    $q=1$ the function $ \widetilde{W}(t)$ decreases monotonically, and
	\beq
	\label{g11.12}
	U_1(t)=\widetilde{W}(t)=t^{\frac{\alpha-k}{n}-\frac{1}{p}}b(t)^{-1},
	\eeq
	see \eqref{g3.13}. For  $1<q<\infty$    we obtain by \eqref{g3.13}  and \eqref{g4.2} that
		\beq
	\label{g11.13}
	U_q(t)=\left( \int_t^T \tau^{q'\left( \frac{\alpha-k}{n}-\frac{1}{p}\right)}b(\tau)^{-q'}\tau^{-1}\dint \tau\right)^{1/q'}.
	\eeq
	We see that for $\alpha<k+n/p$ condition \eqref{g3.11} is valid. For  $U_{1}$ given by  \eqref{g11.12} this is obvious because  $b(t)^{-1}$    is slowly varying. For  $U_{q}$ given by \eqref{g11.13} we define 
	\[
	\beta=\left[\frac{1}{p}-\frac{\alpha-k}{n}\right] q'>0;\quad \lambda(t)=b(t)^{-q'};\quad  A(t)=U_{q}(t)^{q'}.
	\]
	Then, by Lemma \ref{lemma-g-11.1} there exists some $\ve>0$ such that
	\beq
	\label{g11.14}
A(t)t^{\varepsilon q'}\quad\text{decreases monotonically} \iff U_{q}(t)t^{\varepsilon}\quad\text{decreases monotonically}.
	\eeq
	Consequently, we may apply Theorem \ref{theo-g-3.3}  (case {\bf (B)}) and get descriptions \eqref{g3.16}-\eqref{g3.18}, \eqref{g3.19}  with   $\Psi_q$   determined by \eqref{g11.7} for  $q=1,$ or by \eqref{g11.10} for    $1<q<\infty.$ 
	
\end{proof}

\begin{remark}
  \label{rem-g-11.2}
  Note that for   $1<q<\infty$ and the function   $\Psi_q$    defined by integral \eqref{g11.9} we have in view of estimate  \eqref{g5.9} that 
 \beq
 \label{g11.15}
 \frac{\dint \Psi_q(t)}{\Psi_q(t)}\simeq
  \frac{\dint \left( \Psi_q^{q'}(t)\right) }{\left( \Psi_q^{q'}(t)\right)}\simeq \frac{\dint t}{t}.
 \eeq
  For  the function  $\Psi_q=B_q$  given by integral \eqref{g11.11} we get
\beq
\label{g11.16}
\frac{\dint \Psi_q(t)}{\Psi_q(t)}\simeq
\frac{ b(t)^{-q'}t^{-1}\dint t}{\int_0^t b(\tau)^{-q'}\tau^{-1}\dint \tau}.
\eeq
  These assertions are useful when we apply formulas \eqref{g3.16}, \eqref{g3.19}.
  
  \end{remark}

\section*{Acknowledgements}
The work of Elza Bakhtigareeva and Mikhail L. Goldman is supported by the Russian Science Foundation under grant no.19-11-00087 and performed in the Steklov Mathematical Institute of the Russian Academy of Sciences.

\end{document}